\tikzset{fontscale/.style = {font=\relsize{#1}}
    }
\definecolor{ivory}{RGB}{218,215,203}
\definecolor{cuhkp}{RGB}{98,56,105} 	
\definecolor{cuhkpl}{RGB}{152,24,147} 	
\definecolor{cuhkb}{RGB}{219,160,1} 	
\definecolor{cuhkbd}{RGB}{178,129,0} 	
\definecolor{cuhkr}{RGB}{88,35,155}  	
\definecolor{blackp}{RGB}{0,0,0} 
\definecolor{redp}{RGB}{255,0,0}
\definecolor{orangep}{RGB}{255,128,0}
\definecolor{brownp}{RGB}{128,77,0}
\definecolor{yellowp}{RGB}{255,230,0}
\definecolor{greenp}{RGB}{128,230,0}
\definecolor{bluep}{RGB}{0,128,255}
\definecolor{purplep}{RGB}{152,24,147}
\definecolor{pinkp}{RGB}{230,0,128}  
\definecolor{turqp}{RGB}{64,224,208}
\definecolor{minrev}{RGB}{66,175,55}
\definecolor{amg}{RGB}{17,140,17}
\newtheorem{thm}{theorem}[section]
\newtheorem{cor}[thm]{Corollary}
\newtheorem{lem}[thm]{Lemma}
\newtheorem{prop}[thm]{Proposition}
\newtheorem{assum}[thm]{Assumption}
\DeclareMathOperator*{\argmin}{argmin}
\newcommand{\Rn}{\mathbb{R}^n}
\newcommand{\R}{\mathbb{R}}
\newcommand{\xopt}{x^\star}
\newcommand{\AAn}{\mathsf{AA}}
\newcommand{\AAr}{\mathsf{AA}\text{-}\mathsf{R}}
\newcommand{\AAnr}{\mathsf{AA}\text{(-}\mathsf{R}\text{)}}
\newcommand{\GMRES}{\mathsf{GMRES}}
\newcommand{\CR}{\mathsf{CR}}
\newcommand{\CG}{\mathsf{CG}}
\newcommand{\cV}{\mathcal V}
\newcommand{\B}{\mathbb{B}}
\newcommand{\n}{\mathbb{N}}
\newcommand{\vp}{\varphi}
\newcommand{\cA}{\mathcal{A}}
\newcommand{\spa}{\mathrm{span}}
\newcommand{\cK}{\mathcal{K}}
\newcommand{\kap}{\kappa_r}
\newcommand{\rmnum}[1]{\romannumeral #1}
\newcommand{\Rmnum}[1]{\expandafter\@slowromancap\romannumeral #1@}
\crefname{hypothesis}{Hypothesis}{Hypotheses}
\title{Descent Properties of an Anderson Accelerated Gradient Method With Restarting\thanks{Submitted to the editors DATE.
\funding{A. Milzarek is partly supported by the Internal Project Fund from Shenzhen Research Institute of Big Data (SRIBD) under Grant T00120230001 and by the Shenzhen Science and Technology Program under Grant GXWD20201231105722002-20200901175001001. Y. Liu is supported by the Hong Kong Innovation and Technology Commission (InnoHK Project CIMDA).}}}
\author{Wenqing Ouyang\thanks{School of Data Science (SDS), Shenzhen Research Institute of Big Data (SRIBD), The Chinese University of Hong Kong, Shenzhen, China (\email{wenqingouyang1@link.cuhk.edu.cn} and \email{andremilzarek@cuhk.edu.cn}).}
  \and Yang Liu\thanks{Mathematical Institute, University of Oxford, UK (\email{yang.liu@maths.ox.ac.uk}).}
  \and Andre Milzarek\footnotemark[2]
 }
\begin{document}

\maketitle

\begin{abstract}
Anderson Acceleration ($\AAn$) is a popular acceleration technique to enhance the convergence of fixed-point schemes. The analysis of $\AAn$ approaches often focuses on the convergence behavior of a corresponding fixed-point residual, while the behavior of the underlying objective function values along the accelerated iterates is currently not well understood. In this paper, we investigate local properties of $\AAn$ with restarting applied to a basic gradient scheme {($\AAr$)} in terms of function values. Specifically, we show that {$\AAr$} is a local descent method and that it can decrease the objective function {at a rate no slower} than the gradient method {up to higher-order error terms}. These new results theoretically support the good numerical performance of {$\AAnr$} when heuristic descent conditions are used for globalization and they provide a novel perspective on the convergence analysis of {$\AAr$}  that is more amenable to nonconvex optimization problems. Numerical experiments are conducted to illustrate our theoretical findings. 
\end{abstract}

\begin{keywords}
  Anderson Acceleration, Descent Properties, Restarting
\end{keywords}

\begin{AMS}
  90C30, 65K05, 90C06, 90C53
\end{AMS}


\section{Introduction}
In this work, we consider the smooth optimization problem
\begin{align}
 \label{eq-fixed}
 \min_{x\in\R^n}~f(x),   
\end{align}
where $f:\R^n\to\R$ is a continuously differentiable function. If the gradient mapping $\nabla f$ is additionally Lipschitz continuous with modulus $L$, then the basic gradient descent method with fixed step size,
\begin{align}
   \label{picard-iteration}
   x^{k+1}=x^k-\frac{1}{L}\nabla f(x^k) =: g(x^k),  
\end{align} 
can be utilized to solve problem~\eqref{eq-fixed}. Here, $g:\R^n\to\R^n$ represents the associated gradient step mapping with step size $\frac{1}{L}$. The gradient descent step~\eqref{picard-iteration} can be viewed as a fixed-point iteration and the fixed-points of $g$ are exactly the stationary points of the objective function $f$. 

Anderson Acceleration ($\AAn$) applies to fixed-point iterations of the form \eqref{picard-iteration} and is a popular technique to accelerate the convergence of such iterative fixed-point schemes. For instance, $\AAn$-based algorithms have been applied successfully in computer graphics~\cite{peng2018anderson,zhang2019accelerating,ouyang2020anderson}, reinforcement learning~\cite{geist2018anderson,shi2019regularized}, machine learning~\cite{WeiBaoLiu21}, and numerical methods for PDEs~\cite{pollock2019anderson}. In iteration $k$ and based on the past $m$ iterations $\{x^{k-m},\dots,x^k\}$, $\AAn$ first computes the mixing coefficients $\alpha^k = (\alpha^k_1,\dots,\alpha^k_{m})^\top \in \R^m$ as solution of the following optimization problem:
\begin{align}
   \label{AAsubp}
   \min_{\alpha \in \R^{m}}~\left \|h(x^{k-m})+{\sum}_{i=1}^{m}\alpha_i(h(x^{k-m+i})-h(x^{k-m}))\right\|^2,
\end{align}
where $h(x):=g(x)-x$ denotes the residual map and $m$ is a corresponding memory parameter. $\AAn$ then performs the  accelerated iteration:
 \begin{equation}  \label{eq:aa-step-i}    x^{k+1}= g(x^{k-m})+{\sum}_{i=1}^{m}\alpha^k_i(g(x^{k-m+i})-g(x^{k- m})).                      \end{equation}
The parameter $m$ is usually chosen to be fixed or it is allowed to increase each iteration until a given threshold is reached after which $m$ is reinitialized. We will refer to such a restarted version of the Anderson accelerated gradient scheme \eqref{eq:aa-step-i} as $\AAr$ (cf. \cref{algo1} in \cref{sec:algo}). The goal of this paper is to analyze and establish novel descent properties of $\AAr$. In particular, we show that $\AAr$ not only decreases the norm of the residual $\|h(x)\|$, but it can also decrease the underlying objective function $f$. Therefore, we answer the following question affirmatively: \vspace{1ex}
\begin{center}
\begin{minipage}{0.9\textwidth}
\textit{Can Anderson accelerated schemes achieve descent on the underlying objective function values?}
\end{minipage}
\end{center}
 
\subsection{Related Work and Literature}
Originally proposed by Anderson~\cite{anderson1965iterative} for solving partial differential equations, $\AAn$ has gained steadily growing attention during the last decade~\cite{peng2018anderson,pavlov2018aa,henderson2019damped,mai2020anderson,ouyang2020anderson}. Though widely used in various fields and applications, the theoretical analysis and properties of $\AAn$ are still somewhat limited. $\AAn$ is known to belong to the class of multi-secant quasi-Newton methods~\cite{eyert1996comparative,fang2009two,rohwedder2011analysis}. When applied to linear problems, $\AAn$ is equivalent to the generalized minimal residual method ($\GMRES$)~\cite{walker2011anderson,potra2013characterization}. For nonlinear problems, $\AAn$ is also closely related to the nonlinear generalized minimal residual method ($\mathsf{NGMRES}$) \cite{wang2021asymptotic}. The convergence analysis in~\cite{toth2015convergence} shows that $\AAn$ converges locally r-linearly under a smoothness condition on the map $g$ and uniform boundedness of the coefficients $\{\alpha^k\}_k$, but the obtained linear rate is slower than the rate of the original scheme. Later, in \cite{evans2020proof}, the authors prove that $\AAn$ can achieve an improved linear rate with additional quadratic error terms which overall yields r-linear convergence. This result is further improved in \cite{pollock2021anderson} by assuming sufficient linear independence on the set of difference vectors of the residuals $h(x^k)$ and q-linear convergence of $\AAn$ is established with a rate faster than the Picard iteration~\eqref{picard-iteration}. Moreover, if the coefficients $\{\alpha^k\}_k$ are assumed to be constant in each iteration, an asymptotic rate is given in \cite{wang2021asymptotic}. The convergence behavior of $\AAn$ applied to nonsmooth algorithmic schemes is also considered in \cite{mai2020anderson,bian2021anderson}. 

Since $\AAn$ is known to only converge locally~\cite{toth2015convergence,mai2020anderson}, globalization mechanisms are required to use it in practice. A simple and heuristic choice is to check whether $\AAn$ decreases the objective function value $f$ and to perform a fixed-point iteration if the decrease of the $\AAn$ step is not sufficient. Such a strategy is utilized in~\cite{peng2018anderson,ouyang2020anderson,scieur2016regularized,guo2018consistency}. However, to the best of our knowledge, no consistent global-local convergence proofs are known in this case. Alternatively, one can check whether $\AAn$ decreases the residual $\|h(x)\|$ and to reject the step if no sufficient decrease is observed. This strategy is more common and has been used in~\cite{zhang2020globally,ouyang2020nonmonotone,fu2020anderson,ermis2020a3dqn}. Transition to local fast convergence of such a globalized $\AAn$ approach is provided in \cite{ouyang2020nonmonotone}. Unfortunately, the convergence analyses in \cite{zhang2020globally,ouyang2020nonmonotone,fu2020anderson,ermis2020a3dqn} require global nonexpansiveness of $g$\footnote[1]{Though residual-based globalizations of $\AAn$ without nonexpansiveness are possible, it is not fully clear which type of convergence guarantees can be achieved.}, which often necessitates convexity of $f$. Notably, it is also possible to combine function value- and residual-based globalization techniques, see, e.g., \cite{guo2018consistency,tang2022fast}.  

Restarting strategies are part of many numerical algorithms. For instance, restarting is used in the conjugate gradient method ($\CG$) \cite{powell1977restart}, the generalized minimal residual method ($\GMRES$) \cite{saad1986gmres}, and in quasi-Newton methods \cite{meyer1976convergence}. Restarting strategies have also been widely applied in the context of $\AAn$. In \cite{artacho2008siesta,fang2009two}, $\AAn$ is restarted whenever the ratio of the square of the current residual to the sum of the squares of the previous residuals exceeds a predetermined constant. Similar ideas are discussed in \cite[Section 3.2]{zhang2020globally} and \cite{pham2021use}. Restarting strategies are sometimes also used in tandem with regularization techniques to enhance the numerical stability of $\AAn$, see, e.g., \cite{henderson2019damped,shi2019regularized,scieur2016regularized}. A comprehensive comparison between $\AAn$ with restarting and the original $\AAn$ scheme on linear problems with parallel implementation can be found in \cite{loffeld2016considerations}. The results in \cite{loffeld2016considerations} suggest that the performance of $\AAn$ with restarting is generally comparable to the performance of the original $\AAn$ method. Further supporting observations for the effectiveness of restarted $\AAn$ are provided in  \cite{pratapa2015restarted}. 

\subsection{Contributions}
The convergence analyses of $\AAn$~\cite{toth2015convergence,evans2020proof,pollock2021anderson,ouyang2020nonmonotone} focus on the decrease of the residual $\|h(x)\|$, which is natural since $\AAn$ aims to minimize this norm in the $\AAn$ subproblem~\eqref{AAsubp}. However, as mentioned, such globalization strategies usually require global nonexpansiveness of $g$ in order to obtain global convergence results, see, e.g., \cite{zhang2020globally,ouyang2020nonmonotone,fu2020anderson,ermis2020a3dqn}. Hence, the heuristic idea to base the acceptance of an $\AAn$ step on the decrease of the objective function value seems attractive, since the Picard iteration~\eqref{picard-iteration} can decrease the function value even if $f$ is nonconvex. So far, there seems to be no theoretical backing ensuring that $\AAnr$ can achieve descent on the underlying objective function --- even if strong convexity is assumed and an appropriate initial point is selected. Our aim is to investigate this gap and to show that $\AAr$ can decrease the objective function value locally. This result provides theoretical guarantees for  algorithms that utilize descent conditions for $f$ as globalization mechanism without hindering the local fast convergence of $\AAr$. We now summarize our main contributions:
\begin{itemize}
   \item To the best of our knowledge, we establish the first descent properties of $\AAr$ iterations for the gradient descent scheme~\eqref{picard-iteration}. On the one hand, this illuminates the success of algorithms that have used heuristic descent-type conditions to globalize $\AAnr$~\cite{peng2018anderson,ouyang2020anderson,scieur2016regularized}. On the other hand, our findings can be utilized in the design of novel globalization techniques for $\AAr$ methods.
   \item We verify that the iterates generated by $\AAr$ in one restarting cycle are equivalent to the iterates generated by $\GMRES$ when being run on a perturbed linear system model (for the same amount of iterations). This model, without perturbation, is exactly the quadratic expansion of $f$ and the Hessian of this model is symmetric and positive definite if we assume local strong convexity. Hence, $\AAr$ is close to running the conjugate residual method ($\CR$)~\cite{stiefel1955relaxationsmethoden} on such a quadratic model of $f$. Motivated by these observations, we analyze and specify the error between $\GMRES$ and $\CR$ under small perturbations of the system matrix which will allow us to link $\AAr$, $\CR$, and $\CG$. Based on classical results for $\CG$~\cite{hestenes1952methods}, we then show that $\AAr$ not only decreases the objective function value, but the overall achieved descent is actually no smaller than the one obtained by performing a gradient descent step with step size $\frac1L$ up to higher-order error terms. Some byproducts of our results indicate that $\CR$ itself decreases the objective function value no slower than the gradient descent method. 
  \item We design a practical function value-based globalization mechanism for $\AAr$. Unlike residual-based globalizations, this allows to apply $\AAr$ directly to nonconvex  problems without requiring any adjustments of the underlying $\AAr$ scheme. We illustrate the numerical performance of our simple globalization and numerically confirm the derived theoretical descent guarantees of $\AAr$ on several nonconvex large-scale problems.
\end{itemize}

\subsection{Organization}
This work is organized as follows. In \cref{sec:algo}, we introduce the algorithmic details of $\AAr$ and list the standing assumptions. In \cref{sec:convergence_analysis}, we derive the core descent properties of $\AAr$. This is done step by step. In \cref{sec:q_linear_res}, we first establish q-linear convergence. The mentioned equivalence between $\AAr$ and $\GMRES$ is shown in \cref{aa_gmres}. Next, in \cref{gmres_cr}, we analyze the error between the sequences generated by $\CR$ and $\GMRES$ which allows to connect $\GMRES$ and $\CR$. The detailed connection between $\CR$ and $\CG$ is investigated in~\cref{cr_cg}. An objective function value-based globalization of $\AAr$ is presented in \cref{globalization}. Finally, in \cref{sec:exp}, we verify our theoretical results and test the proposed globalized $\AAr$ algorithm on several examples.  

\subsection{Notation}
Throughout this work, we consider the fixed-point mapping $g(x) = x- \nabla f(x)/L$ and the corresponding residual $h(x):=g(x)-x = - \nabla f(x)/L$. For a given sequence of iterates $\{x^k\}_k$, we define the terms:
\[     M_{h}^k:=\max_{k-\hat m\leq i\leq k}\|h(x^i)-h(x^{k-\hat m})\|,\quad M_x^k:=\max_{k-\hat m\leq i\leq k}\|x^i-x^{k-\hat m}\|,                 \]
where $\hat m = \mathrm{mod}(k,m+1)$. We further introduce the following matrices and notations:
\begin{align*}
    {X}_k&:=[x^{k-\hat m+1}-x^{k-\hat m},\dots,x^{k}-x^{k-\hat m}] \in \R^{n \times \hat m},\notag\\
    H_k&:=[h(x^{k-\hat m+1})-h(x^{k-\hat m}),\dots,h(x^{k})-h(x^{k-\hat m})] \in \R^{n \times \hat m}, \notag\\
    G_k&:=[g(x^{k-\hat m+1})-g(x^{k-\hat m}),\dots,g(x^{k})-g(x^{k-\hat m})] \in \R^{n \times \hat m}, \notag
\end{align*}
$\hat x^0:=x^0$, $\hat{x}^k:=x^{k-\hat m}+X_k\alpha^k$, $g^k:= g(x^k)$, $\hat{g}^k:=g^{k-\hat m}+G_k\alpha^k$, $h^k:=h(x^k)$, and $\hat{h}^k:=\hat{g}^k-\hat{x}^k$. The definition of $\hat x^k$ follows \cite[Equation (2.4)]{evans2020proof}. We further note that the $\AAn$ subproblem \cref{AAsubp} can be viewed as finding the minimal value of the linearized residual of $\hat x^k$ which is $\hat h^k$. Based on these notations, we can express the solution to \eqref{AAsubp} explicitly by $\alpha^k=-(H_k^\top H_k)^{-1}H_k^\top h^{k-\hat m}$ provided that $H_k^\top H_k$ is invertible. 
    
 For given $n \in \mathbb{N}$, we set $[n] := \{1,\dots,n\}$. For a matrix $A$, $\sigma_{\max}(A)$ ($\lambda_{\max}(A)$) denotes the largest singular value (eigenvalue) of $A$ and $\sigma_{\min}(A)$ ($\lambda_{\min}(A)$) is the smallest singular value (eigenvalue) of $A$. The condition number of $A$ is given by $\kappa(A) := \sigma_{\max}(A)/\sigma_{\min}(A)$. Unless specified otherwise, the norm $\|\cdot\|$ refers to the standard Euclidean norm for vectors and the spectral norm for matrices. We will use $\|\cdot\|_F$ to denote the Frobenius norm of a matrix. The space $\cK^t(A,b):=\spa\{b,Ab,\dots,A^{t-1}b\}$ is used to denote the $t$-th Krylov space generated by $A$ and $b$.


\section{Anderson Acceleration with Restarting}
\label{sec:algo}
Throughout this paper, we assume that:
\begin{assum} \label{assum:a}
    There is some $r>0$ and a stationary point $\xopt\in \R^n$ of $f$ (i.e., $\nabla f(\xopt) = 0$) such that:
\begin{enumerate}[label=\textup{\textrm{(A.\arabic*)}},topsep=0pt,itemsep=0ex,partopsep=0ex]
        \item \label{A1} The function $f$ is $L$-smooth on $\R^n$.
        \item \label{A2} The function $f$ is $\mu$-strongly convex on $\B_r(\xopt) := \{x: \|x-\xopt\| < r\}$. 
        \item \label{A3} The Hessian $\nabla^2 f$ is Lipschitz continuous with modulus $L_{H}$ on $\B_r(\xopt)$. 
\end{enumerate}
\end{assum}

We note that the conditions formulated in~\cref{assum:a} are common in the convergence analysis of Anderson accelerated gradient methods, see, e.g., \cite{toth2015convergence,evans2020proof} for comparison. Let $\kap := \frac{L}{\mu}$ denote the condition number of the Hessian $\nabla^2 f$ on $\B_r(\xopt)$. Then, under~\cref{assum:a}, it follows:
\begin{align}
    \label{contract_g}
    \|g(x)-g(y)\|\leq \left\|I-\frac{1}{L}\bar{F}\right\|\|x-y\|\leq \left(1-\frac{1}{\kap}\right)\|x-y\|, \quad \forall~x,y\in\B_r(\xopt), 
\end{align}
where $\bar F:= \int_0^1\nabla^2f(y+t(x-y))\,\mathrm{dt}$. Hence, $g$ is contractive on $\B_r(\xopt)$ with Lipschitz constant $1-\frac{1}{\kap}$.

We study local properties of $\AAn$ with restarting applied to the gradient mapping $g(x)=x-\frac{1}{L}\nabla f(x)$. In particular, given some initial point $x^0$ which is sufficiently close to $\xopt$, we apply $\AAn$ on $\{x^0,\dots,x^{k-1}\}$ to obtain the new iterate $x^k$. After $m$ iterations ($m$ is the fixed memory parameter), this procedure is stopped and $\AAn$ is restarted with $x^{m+1}$ as new initial point of the next cycle. The full algorithm of the restarted $\AAn$ scheme for problem~\eqref{eq-fixed} -- {$\AAr$} -- is shown below in~\cref{algo1}.

\begin{algorithm}[ht]
    \caption{$\AAn$ with Restarting {($\AAr$)}}
    \label{algo1}
    \begin{algorithmic}[1]         
    \REQUIRE  Choose the initial point $x^0$ and the memory parameter $m \in \mathbb{N}$. Set the current memory parameter as $\hat m=0$.
        \FOR{$k=0,1,\dots$}
         \STATE $\hat m = \mathrm{mod}(k,m+1)$.
         \IF{$\hat m=0$}
             \STATE  Set $x^{k+1}=g(x^k)$.
             \ELSE
             \STATE  Calculate the coefficient $\alpha^k$ based on the sequence $\{h(x^k),\dots,h(x^{k-\hat m})\}$ via solving \eqref{AAsubp} and set $x^{k+1}=g^{k-\hat m}+G_k\alpha^k$.
             \ENDIF
        \ENDFOR
    \end{algorithmic}
\end{algorithm}


\section{Convergence and Descent Properties of {$\AAr$}}
\label{sec:convergence_analysis}
Most classical convergence analyses of $\AAn$ are based on the same idea --- linearization~\cite{toth2015convergence,evans2020proof,mai2020anderson,ouyang2020nonmonotone}. It is well-known that $\AAnr$ is equivalent to $\GMRES$ if the mapping $g$ is affine~\cite{potra2013characterization} and therefore, $\AAnr$ can be viewed as $\GMRES$ being applied to a linear approximation of $g$ along with some linearization error. When specialized to the gradient mapping, these analyses ignore the structural information that $g$ has a symmetric Hessian. Taking this information into account, we can deduce that the system matrix of the $\GMRES$ procedure is essentially close to a symmetric positive definite matrix, which means that $\GMRES$ is close to $\CR$ in this case. This observation motivates us to utilize classical tools for $\CR$ to show that $\AAr$ locally performs descent steps for $f$. The main technical difficulty lies in the fact that the iterates generated by $\AAr$ only coincide with the iterates generated by $\GMRES$ after one additional gradient step. We resolve this complication by connecting $\GMRES$ and $\CG$ (via $\CR$) and by analyzing the relevant properties via a $\CG$-based perspective.
 
\subsection{Q-Linear Convergence of $\AAr$}
\label{sec:q_linear_res}
We first present an additional assumption and several basic properties of $\AAr$ (and $\AAn$) that allow to establish q-linear convergence. This will serve as a foundation for our later results. 
\begin{assum}
\begin{enumerate}[label=\textup{\textrm{(A.\arabic*)}},topsep=0pt,itemsep=0ex,partopsep=0ex,start=4]
        \item \label{A5} The condition number of $X_k^\top X_k$ is bounded by $M^2$ for every $k\in \n$.
\end{enumerate} 
\end{assum}

Let us note that the analogous assumption on $H_k$ is more common, since $H_k$ appears directly in the computation of the coefficient $\alpha_k$ in \eqref{AAsubp}. We address this issue in the following proposition and show that condition \ref{A5} is actually equivalent to assuming that the condition number of $H_k^{\top}H_k$ is bounded locally. Hence, in practice, \ref{A5} can be ensured by monitoring the condition number of $H_k^\top H_k$. For instance, we can restart the current cycle whenever the condition number of $H_k^\top H_k$ exceeds a given tolerance. Alternative strategies are further discussed in \cite[Section 5.1.3]{pollock2021anderson}. It is also possible to mitigate condition \ref{A5} and related boundedness assumptions on the mixing coefficients $\{\alpha^k\}_k$, \cite{toth2015convergence,evans2020proof,mai2020anderson,bian2021anderson} via algorithmic independence checks or adaptive depth mechanisms, see, e.g., \cite{ChuDupLegSer21}. However, such adjustments naturally affect the achievable convergence and acceleration results.

\begin{prop} \label{prop-HXeq} Suppose that the conditions \ref{A1}--\ref{A3} are satisfied. Then, the following statements hold:
\begin{enumerate}[label=\textup{\textrm{(\roman*)}},topsep=0pt,itemsep=0ex,partopsep=0ex]
        \item For every $M_X > 0$ there is a neighborhood $U_1$ of $\xopt$ such that if $x^{k-\hat m}\dots,x^{k}\in U_1$ and $\kappa(X_k^{\top}X_k)\leq M_X^2$, then it holds that $\kappa(H_k^\top H_k)\leq 4\kap^2\kappa(X_k^\top X_k)$.
        \item For every $M_H > 0$ there is a neighborhood $U_2$ of $\xopt$ such that if $x^{k-\hat m},\dots,x^{k}\in U_2$ and $\kappa(H_k^{\top}H_k)\leq M_H^2$, then we have $\kappa(X_k^\top X_k)\leq 4\kap^2\kappa(H_k^\top H_k)$.
    \end{enumerate}
\end{prop}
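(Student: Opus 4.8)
The plan is to reduce both inequalities to a single linearization identity relating $H_k$ and $X_k$, combined with an elementary condition-number perturbation argument. Put $A:=\nabla^2 f(\xopt)$; by~\ref{A2}, $A$ is symmetric positive definite with spectrum in $[\mu,L]$, so $\sigma_{\max}(A)\le L$, $\sigma_{\min}(A)\ge\mu$, $\sigma_{\max}(A^{-1})\le 1/\mu$, and $\sigma_{\min}(A^{-1})\ge 1/L$. Since $h=-\nabla f/L$, the fundamental theorem of calculus shows that the $i$-th column of $H_k$ equals $-\tfrac1L\bar F_{k,i}\,(X_k e_i)$, where $\bar F_{k,i}:=\int_0^1\nabla^2 f\big(x^{k-\hat m}+t(x^{k-\hat m+i}-x^{k-\hat m})\big)\,\mathrm{d}t$ is an averaged Hessian. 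Restricting the iterates to a ball $U=\B_\delta(\xopt)\subseteq\B_r(\xopt)$, convexity of $U$ together with~\ref{A3} gives $\|\bar F_{k,i}-A\|\le L_H\delta$ for each $i$, hence $H_k=-\tfrac1L(AX_k+E_k)$ with $\|E_k\|_F\le L_H\delta\,\|X_k\|_F$. If moreover $L_H\delta\le\mu/2$, then every $\bar F_{k,i}$ is invertible and $\|\bar F_{k,i}^{-1}-A^{-1}\|\le 2L_H\delta/\mu^2$ by the resolvent identity, so inverting the columns also yields $X_k=-LA^{-1}H_k+R_k$ with $\|R_k\|_F\le (2LL_H\delta/\mu^2)\,\|H_k\|_F$.

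Next I would invoke two standard facts: singular values are $1$-Lipschitz under additive perturbations, $\sigma_{\max}(C+\Delta)\le\sigma_{\max}(C)+\|\Delta\|$ and $\sigma_{\min}(C+\Delta)\ge\sigma_{\min}(C)-\|\Delta\|$; and for a square invertible $B$, $\sigma_{\max}(BC)\le\sigma_{\max}(B)\sigma_{\max}(C)$ and $\sigma_{\min}(BC)\ge\sigma_{\min}(B)\sigma_{\min}(C)$. Applied to $H_k=-\tfrac1L(AX_k+E_k)$, these give $\sigma_{\max}(H_k)\le\tfrac1L\big(L\sigma_{\max}(X_k)+\|E_k\|\big)$ and $\sigma_{\min}(H_k)\ge\tfrac1L\big(\mu\sigma_{\min}(X_k)-\|E_k\|\big)$; applied to $X_k=-LA^{-1}H_k+R_k$, they give the mirror-image estimates $\sigma_{\max}(X_k)\le\tfrac L\mu\sigma_{\max}(H_k)+\|R_k\|$ and $\sigma_{\min}(X_k)\ge\sigma_{\min}(H_k)-\|R_k\|$.

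The step that actually consumes the hypothesis is converting the Frobenius bound on $E_k$ (resp.\ $R_k$) into a bound in terms of the \emph{smallest} singular value. For part~(i), $\kappa(X_k^\top X_k)\le M_X^2$ forces $X_k$ to have full column rank and $\kappa(X_k)\le M_X$, so $\|X_k\|_F\le\sqrt{\hat m}\,\sigma_{\max}(X_k)\le\sqrt{m}\,M_X\,\sigma_{\min}(X_k)$ and hence $\|E_k\|\le\rho\,\sigma_{\min}(X_k)$ with $\rho:=L_H\delta\sqrt{m}\,M_X$. Substituting into the two bounds for $H_k$ gives $\kappa(H_k)\le(L\kappa(X_k)+\rho)/(\mu-\rho)$; if $\delta$ is chosen so small that $\rho\le\mu/3$, then $\mu-\rho\ge\tfrac23\mu$ and $\rho\le\tfrac13 L\kappa(X_k)$ (since $\mu\le L\le L\kappa(X_k)$), whence the numerator is $\le\tfrac43 L\kappa(X_k)$ and $\kappa(H_k)\le 2\kap\,\kappa(X_k)$, i.e.\ $\kappa(H_k^\top H_k)=\kappa(H_k)^2\le 4\kap^2\,\kappa(X_k^\top X_k)$; one may take $U_1=\B_\delta(\xopt)$ with $\delta=\min\{r,\,\mu/(3L_H\sqrt{m}\,M_X)\}$. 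Part~(ii) is the exact mirror image: from $\kappa(H_k^\top H_k)\le M_H^2$ one gets $\|R_k\|\le\rho'\sigma_{\min}(H_k)$ with $\rho':=2LL_H\delta\sqrt{m}\,M_H/\mu^2$, then $\kappa(X_k)\le(\tfrac L\mu\kappa(H_k)+\rho')/(1-\rho')$, and imposing $\rho'\le1/3$ (which also guarantees $L_H\delta\le\mu/2$, so that the decomposition of $X_k$ is valid) yields $\kappa(X_k)\le 2\kap\,\kappa(H_k)$ and the claim with $U_2=\B_\delta(\xopt)$, $\delta=\min\{r,\,\mu^2/(6LL_H\sqrt{m}\,M_H)\}$. (When $M_X<1$ or $M_H<1$ the respective hypothesis is never met, so we may assume $M_X,M_H\ge1$; the case $\hat m=0$ is trivial.)

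I expect the main obstacle to be careful bookkeeping rather than any deep idea. The one genuine subtlety is that the columns of $H_k$ carry \emph{different} averaged Hessians $\bar F_{k,i}$, so $H_k$ is merely close to $-\tfrac1L AX_k$; the discrepancy $E_k$ (and likewise $R_k$) is controlled naturally only in the Frobenius norm, and it is precisely the a priori condition-number bound $M_X$ (resp.\ $M_H$) that lets one absorb it relative to $\sigma_{\min}$ — which is why the admissible neighborhood must shrink as $M_X$ (resp.\ $M_H$) grows. The resulting constant $4\kap^2$ is not sharp; it arises from trading the perturbation for a factor of $2$ in the leading singular-value term.
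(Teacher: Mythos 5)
Your proof is correct, but it follows a genuinely different decomposition than the paper's. The paper packages the Taylor remainders $b_i=\nabla f(x^i)-\nabla f(x^0)-\nabla^2f(x^0)(x^i-x^0)$ into a single square matrix $E_k=B_k(X_k^\top X_k)^{-1}X_k^\top$ so that $A_k:=\nabla^2f(x^0)+E_k$ satisfies the \emph{exact} identity $A_kX_k=-LH_k$; the condition number then factors cleanly as $\kappa(H_k^\top H_k)=\kappa(X_k^\top A_k^\top A_kX_k)\leq\kappa(X_k^\top X_k)\,\kappa(A_k^\top A_k)$, and the hypothesis $\kappa(X_k^\top X_k)\leq M_X^2$ is consumed only in bounding $\|E_k\|$ via $\|(X_k^\top X_k)^{-1}X_k^\top\|=1/\sigma_{\min}(X_k)\leq M_X/\|X_k\|$. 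You instead keep the exact column-wise mean-value identity with distinct averaged Hessians $\bar F_{k,i}$, linearize at $\xopt$ rather than $x^0$, and treat the discrepancy as an additive residual controlled in Frobenius norm, feeding it into a direct singular-value perturbation of $\kappa(H_k)$; the a priori bound $\kappa(X_k)\leq M_X$ enters through $\|X_k\|_F\leq\sqrt{m}\,M_X\sigma_{\min}(X_k)$. Both arguments spend the condition-number hypothesis at the same conceptual point (comparing the perturbation against $\sigma_{\min}$, which is why the neighborhood must shrink like $1/M_X$, resp.\ $1/M_H$), and your constants land on the same $4\kap^2$. What the paper's construction buys — and yours does not — is the matrix $A_k$ itself and the exactness $A_kX_k=-LH_k$: these are reused in \eqref{eq:lets-define-A} and \cref{prop4-4} to identify one $\AAr$ cycle with $\GMRES$ on $A(x-x^0)=-\nabla f(x^0)$, which a purely additive, column-by-column error bound would not directly support. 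As a self-contained proof of \cref{prop-HXeq}, however, your argument is complete and arguably more elementary, since it avoids the pseudo-inverse construction and the Banach-type bookkeeping on $A_k$.
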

\begin{proof}
    Without loss of generality and in order to simplify the notation, we assume $\hat m = m$ and $k=m$. Let us first define $U_1:=\B_{\delta_1}(\xopt)$ where $\delta_1: =\min\{r,(1-\frac{1}{\sqrt{2}})\frac{\mu}{\sqrt{m}M_XL_{H}}\}$. We further set $b_i:=\nabla f(x^{i})-\nabla f(x^{0})-\nabla^2f(x^0)(x^i-x^0)$ and $B_k:=[b_{1},\dots,b_k]$. Utilizing~\cite[Lemma 4.1.1]{nesterov2018lectures}, it follows $\|b_i\|\leq \frac{L_{H}}{2}\|x^i-x^{0}\|^2$ for all $i \in [k]$ and we obtain
\[ \|B_k\| \leq \|B_k\|_{F}\leq \sqrt{m} \max_{1\leq i \leq k} \|b_i\| \leq \frac{\sqrt{m}L_{H}}{2}\max_{1\leq i\leq k} \|x^i-x^{0}\|^2=\frac{\sqrt{m}L_{H}(M_x^k)^2}{2}. \]
Defining $E_k:=B_k(X_k^\top X_k)^{-1}X_k^\top$ ($E_k$ is well-defined since $X_k^\top X_k$ is invertible) and $A_k=\nabla^2f(x^0)+E_k$, a direct calculation yields
    \begin{equation} \label{eq:why-not-use-it} A_k(x^i-x^0)=\nabla f(x^i)-\nabla f(x^0) \quad \forall~i \in [k].    \end{equation}
In other words, we have $A_kX_k=-LH_k.$ Moreover, the norm of $E_k$ can be estimated as follows
\begin{align}
    \nonumber
    \|E_k\| & \leq \|B_k\|\|(X_k^\top X_k)^{-1}X_k^\top\| \\ & =\|B_k\| \sqrt{\|(X_k^\top X_k)^{-1}\|} \leq \frac{\|B_k\|_F M_X}{\|X_k\|}\leq \frac{\sqrt{m}L_{H}M_XM_x^k}{2}, \label{bound_Em}
\end{align}
    where we used $\|X_k\|\geq \max_{1\leq i\leq k} \|X_ke_i\|=M_x^k$ with $e_i \in \R^k$ being the $i$-th unit vector. Due to $M_x^k\leq 2\delta_1$, we can further infer $\|E_k\|\leq \sqrt{m}L_{H}M_X\delta_1 \leq (1-\frac{1}{\sqrt{2}})\mu$. Therefore, it holds that $\sigma_{\max}(A_k)\leq \lambda_{\max}(\nabla^2f(x^0))+\sigma_{\max}(E_k)\leq \sqrt{2}L$ and $\sigma_{\min}(A_k)\geq   \lambda_{\min}(\nabla^2f(x^0))-\sigma_{\max}(E_k)\geq \frac{1}{\sqrt{2}}\mu$. Consequently, we have $\kappa(A_k)\leq 2\kap$. This allows to bound the condition number of $H_k$:
    \[           \kappa(H_k^\top H_k)=\kappa(X_k^\top A_k^\top A_k X_k)\leq \kappa(X_k^\top X_k)\kappa(A_k^\top A_k)\leq 4\kap^2M_X^2      \]
and proves part (i). We now turn to the proof of the second statement. We define $U_2:=\B_{\delta_2}(\xopt)$, $\delta_2:=\min\{r,(1-\frac{1}{\sqrt{2}})\frac{\mu^2}{\sqrt{m}LL_{H}M_H}\}$, $\tilde b_i=\frac{1}{L}[(\nabla^2f(x^0))^{-1}(\nabla f(x^i)-\nabla f(x^0))-(x^i-x^0)]$, and $\tilde B_k=[\tilde b_1,\dots,\tilde b_k]$. As before, we obtain
    \[\|\tilde B_k\|=\frac{1}{L}\|\nabla f(x^0)^{-1}B_k\|\leq \frac{1}{L\mu}\|B_k\|\leq \frac{\sqrt{m}L_{H}(M_x^k)^2}{2L\mu}.           \]
Let $j \in [k]$ be given with $M_x^k= \max_{1\leq i\leq k} \|x^i-x^0\| = \|x^j-x^0\|$. Using \ref{A2}, it then holds that
\begin{equation}
    \label{eq_bound_H}
    \|H_k\| \geq M_h^k\geq \|h^j-h^0\|=\frac{1}{L}\|\nabla f(x^j)-\nabla f(x^0)\| \geq \frac{1}{\kap}\|x^j-x^0\|=\frac{M_x^k}{\kap}.   
\end{equation}
Therefore, setting $\tilde E_k:=\tilde B_k(H_k^\top H_k)^{-1}H_k^\top$, it follows
     \begin{align*}
        \|\tilde E_k\|\leq \|\tilde B_k\|\sqrt{\|(H_k^\top H_k)^{-1}\|}\leq\frac{\sqrt{m}L_{H}(M_x^k)^2M_H}{2L\mu\|H_k\|}\leq \frac{\sqrt{m}L_{H} M_HM_x^k}{2\mu^2}   
     \end{align*}
    and by the definition of $\delta_2$, we have $M_x^k\leq 2\delta_2$ and $\|\tilde E_k\|\leq \frac{\sqrt{m}L_{H} M_H\delta_2}{\mu^2}\leq (1-\frac{1}{\sqrt{2}})\frac{1}{L}$. Next, defining $\tilde A_k :=(\nabla^2 f(x^0))^{-1}+\tilde E_k$, we can again infer  
    \[     \tilde A_k(\nabla f(x^i)-\nabla f(x^0))=x^i-x^0, \quad i \in [k] \quad \implies \quad L\tilde A_k H_k=-X_k,          \]
$\sigma_{\max}(\tilde A_k)\leq \lambda_{\max}(\nabla^2f(x^0)^{-1})+\sigma_{\max}(\tilde E_k)\leq \frac{\sqrt{2}}{\mu}$, and $\sigma_{\min}(\tilde A_k)\geq \lambda_{\min}(\nabla^2f(x^0)^{-1})-\sigma_{\max}(\tilde E_k)\geq {1}/{(\sqrt{2}L)}$. This yields $\kappa(\tilde A_k)\leq 2\kap$ and $\kappa(X_k^\top X_k)\leq \kappa(H_k^\top H_k)\kappa(\tilde A_k^\top \tilde A_k)\leq 4\kap^2M_H^2$.                   
\end{proof}

We note that the matrix $A_k = \nabla^2 f(x^0) + E_k$ defined in the proof of \cref{prop-HXeq} is a key technical ingredient in this paper and will be used in the subsequent sections. The matrix $A_k$ consists of the symmetric Hessian $\nabla^2 f(x^0)$ and the perturbation matrix $E_k$. Our goal in the next subsections is to suitably control the norm of $E_k$ promoting a link between $\AAr$ and $\CR$.
 
Based on condition \ref{A5}, we now verify q-linear convergence of the sequence $\{\|h^k\|\}_k$. Let us remark that assumption \ref{A5} (or its equivalent formulation for the matrices $H_k^\top H_k$) is generally stronger than the condition appearing in \cite{pollock2021anderson}. Namely, in \cite[Theorem 5.1]{pollock2021anderson}, q-linear convergence of $\AAnr$ is shown under a sufficient linear independence condition on each of the columns of (a permutation of) $H_k$. Here, we will work with the slightly stronger assumption \ref{A5} as it allows us to study the behavior of $(H_k^\top H_k)^{-1}H_k^\top$ under perturbations which is required to link $\AAr$ and $\CR$. More details can be found in the proof of \cref{thm4-6}. In addition, in \cite{pollock2021anderson}, contraction and Lipschitz differentiability of $\nabla f$ is assumed on the whole space $\Rn$, while we consider the local case in a neighborhood of $\xopt$. Following the derivation in \cite{pollock2021anderson}, we first analyze the behavior of the residuals $\{\|h^k\|\}_k$ in one cycle of {$\AAr$}.

\begin{prop}
    \label{q_linear_conv}
    Let the conditions \ref{A1}--\ref{A5} be satisfied and let us further assume $g(x^i), x^i\in U_1$ with $i=k-\hat m,\dots,k$ and $\hat x^k,\hat g^k\in U_1$ (where $U_1$ is introduced in~\cref{prop-HXeq} for $M_X =M$). Then, it holds that:
    \begin{equation} \label{eq:hk-q} \|h^{k+1}\|\leq \left(1-\frac{1}{\kap}\right)\|h^k\|+\mathcal O\left(\|h^k\|\left({\sum}_{i=k-\hat m}^k\|h^i\|\right)\right).               \end{equation}
\end{prop}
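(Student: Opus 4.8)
The plan is to follow the linearization strategy that underlies the classical analyses of $\AAn$ \cite{toth2015convergence,evans2020proof,pollock2021anderson}, but to keep careful track of the higher-order remainder in terms of the residual norms. First I would recall that, by definition, $h^{k+1} = h(x^{k+1})$ and $x^{k+1} = \hat g^k = g^{k-\hat m} + G_k \alpha^k$, so I want to compare $h^{k+1}$ with the \emph{linearized} residual $\hat h^k = \hat g^k - \hat x^k$, which is exactly the quantity minimized by the $\AAn$ subproblem \eqref{AAsubp}. Writing $h^{k+1} = (h(\hat g^k) - \hat h^k) + \hat h^k + (h(x^{k+1}) - h(\hat g^k))$ — or more simply exploiting that $x^{k+1} = \hat g^k$ and bounding $h^{k+1}$ directly against $\hat h^k$ plus the discrepancy between the true residual map and its linear model at $\hat x^k$ — I reduce the task to two estimates: (a) a bound on $\|\hat h^k\|$, and (b) a bound on the linearization error of $h$ at the relevant points.

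For (a), the optimality in \eqref{AAsubp} gives $\|\hat h^k\| \le \|h^{k-\hat m} + H_k\beta\|$ for every $\beta$, and in particular, choosing $\beta = 0$ would give $\|\hat h^k\| \le \|h^{k-\hat m}\|$; but this is too weak since I need the contraction factor $1-\frac{1}{\kap}$ attached to $\|h^k\|$ rather than $\|h^{k-\hat m}\|$. The standard trick (cf. \cite{pollock2021anderson,evans2020proof}) is to pick $\beta = e_{\hat m}$ (the last unit vector), so that $h^{k-\hat m} + H_k e_{\hat m} = h^{k}$, giving $\|\hat h^k\| \le \|h^k\|$. I then compare $\hat h^k$ with $h(g(x^k))$: since $\hat g^k$ is an affine combination (with coefficients summing to one) of the $g(x^i)$ and $\hat x^k$ is the same affine combination of the $x^i$, and since $g$ on $U_1$ is within $\mathcal O((M_x^k)^2) = \mathcal O((\sum_i \|h^i\|)^2)$ of its linearization about $x^{k-\hat m}$ by \ref{A3} and \cite[Lemma 4.1.1]{nesterov2018lectures}, the affine model residual $\hat h^k$ equals the true residual at $\hat g^k$ up to $\mathcal O(\|\alpha^k\|(M_x^k)^2)$; here assumption \ref{A5} (equivalently, the bound on $\kappa(H_k^\top H_k)$ via \cref{prop-HXeq}) is what keeps $\|\alpha^k\|$ under control, so this error is $\mathcal O(M_x^k \sum_i \|h^i\|)$, and since $M_x^k = \mathcal O(\sum_i \|h^i\|)$ it is $\mathcal O((\sum_i \|h^i\|)^2)$, which is subsumed by the stated $\mathcal O(\|h^k\|\sum_i\|h^i\|)$ after also noting $\|h^k\| \asymp \|x^k - x^{k-\hat m}\|$-type comparisons near $\xopt$.

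For (b), I use that $h = -\nabla f/L$, so the gap between $h$ and its first-order model is controlled by the Lipschitz continuity of $\nabla^2 f$ on $\B_r(\xopt)$, again via \cite[Lemma 4.1.1]{nesterov2018lectures}: for points $u$ at distance $\mathcal O(M_x^k)$ from $x^{k-\hat m}$ this gap is $\mathcal O((M_x^k)^2)$. Combining, $\|h^{k+1}\| \le \|h(g(x^k))\| + (\text{higher-order}) = \|h(g(x^k))\| + \mathcal O((M_x^k)^2)$, and then the contraction estimate \eqref{contract_g} gives $\|h(g(x^k))\| = \frac1L\|\nabla f(g(x^k))\| = \|g(g(x^k)) - g(x^k)\| \le (1-\frac1{\kap})\|g(x^k) - x^k\| = (1-\frac1{\kap})\|h^k\|$, where I use that $g$ maps the relevant neighborhood into $\B_r(\xopt)$. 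Finally, I convert all $\mathcal O((M_x^k)^2)$ terms into $\mathcal O(\|h^k\|\sum_{i=k-\hat m}^k\|h^i\|)$: one factor of $M_x^k = \max_i\|x^i - x^{k-\hat m}\|$ is bounded by $\mathcal O(\sum_{i}\|h^i\|)$ using that consecutive iterates differ by $\mathcal O(\max\|h^j\|)$ (gradient steps) and that $\AAn$ steps are controlled affine combinations, while the other factor is bounded by $\mathcal O(\|h^k\|)$ after relating $M_x^k$ back to the current residual through strong convexity on $\B_r(\xopt)$ — this last bookkeeping, making sure the asymmetric product $\|h^k\|\sum_i\|h^i\|$ (rather than the cruder $(\sum_i\|h^i\|)^2$) comes out, is the part that requires the most care. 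The main obstacle I anticipate is precisely this: controlling $\|\alpha^k\|$ and the $M_x^k$-versus-$\|h^k\|$ comparisons tightly enough, uniformly over one restart cycle, so that the remainder has the claimed product form and the leading contraction constant is exactly $1-\frac1{\kap}$ with no additional multiplicative loss.
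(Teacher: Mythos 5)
There is a genuine gap in how you extract the contraction factor. You assert that ``the affine model residual $\hat h^k$ equals the true residual at $\hat g^k$ up to higher order'' and, downstream, that $\|h^{k+1}\|\leq\|h(g(x^k))\|+\mathcal O((M_x^k)^2)$. Neither claim is correct. The quantity $\hat h^k=\hat g^k-\hat x^k$ approximates $h(\hat x^k)$, the residual at the extrapolated point \emph{before} the gradient-type step, not $h(\hat g^k)=h^{k+1}$; the two differ by roughly $-\frac1L\nabla^2 f\cdot \hat h^k$, a \emph{first}-order quantity, and it is precisely this correction that produces the factor $1-\frac1{\kap}$. The comparison with the plain gradient step also fails already for quadratics: with $f(x)=\frac12x^\top Ax$ one has $h^{k+1}=(I-\frac1LA)\hat h^k$ and $h(g(x^k))=(I-\frac1LA)h^k$, and the inequality $\|\hat h^k\|\leq\|h^k\|$ coming from the test vector $e_{\hat m}$ does not survive multiplication by $I-\frac1LA$ (choose $h^k$ in the eigenspace of $\lambda_{\max}(A)=L$ and $\hat h^k$ with a component in the eigenspace of $\mu$, respecting $\langle\hat h^k,h^k\rangle=\|\hat h^k\|^2$). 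The valid assembly, which you have all the ingredients for but do not carry out, is $\|h^{k+1}\|=\|h(\hat g^k)\|\leq\|h(g(\hat x^k))\|+\mathrm{err}\leq(1-\frac1{\kap})\|h(\hat x^k)\|+\mathrm{err}\leq(1-\frac1{\kap})\|\hat h^k\|+\mathrm{err}\leq(1-\frac1{\kap})\|h^k\|+\mathrm{err}$; the point $g(x^k)$ plays no role.

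Your remainder bookkeeping is also inverted. Within a convergent cycle the early residuals dominate, so $\bigl(\sum_{i}\|h^i\|\bigr)^2\gtrsim\|h^k\|\sum_i\|h^i\|$: the crude square bound is \emph{weaker} than, not subsumed by, the stated asymmetric product, and it would not support \cref{prop-local}, which needs the error to be small relative to $\|h^k\|$ rather than to $\|h^{k-\hat m}\|$. Getting the product form requires showing that the extrapolation magnitude is itself controlled by $\|h^k\|$ (through the optimization gain of \eqref{AAsubp}), which is exactly the delicate part of \cite[Theorem 5.1]{pollock2021anderson}; the bound $\|\hat x^k-x^{k-\hat m}\|=\mathcal O(\|h^{k-\hat m}\|)$ that follows from \ref{A5} alone is not sharp enough. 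Note also that the paper does not rederive the estimate at all: its proof consists of verifying the hypotheses of that cited theorem --- the contraction constant from \eqref{contract_g}, the lower bound $\|h^i-h^{i-1}\|\geq\kap^{-1}\|x^i-x^{i-1}\|$, and the sufficient-linear-independence (angle) condition obtained from \ref{A5} and \cref{prop-HXeq} via a QR factorization of the differenced residual matrix --- and then invoking its estimate (5.18). A self-contained argument along your lines is possible, but it must fix the two points above.
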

\begin{proof} This result basically follows from \cite[Theorem 5.1]{pollock2021anderson}. A comprehensive proof is presented in~\cref{app:q-proof}. \end{proof}

In order to transfer the statement in~\cref{q_linear_conv} to the full sequence $\{\|h^k\|\}_k$, we need to show that $\AAr$ stays indeed local. 

\begin{lem}
    \label{lemma3-5}
    Let the conditions \ref{A1}--\ref{A3} hold and assume $x^{k-\hat m},\dots,x^k\in\B_{r}(\xopt)$. If $\kappa(H_k^\top H_k)\leq M_H^2$, then we have:
    \[    \|\hat x^k-x^{k-\hat m}\|\leq \sqrt{m}M_H\kap\|h^{k-\hat m}\| \quad \text{and} \quad \|\hat g^k-x^{k-\hat m}\|\leq (1+\sqrt{m}M_H\kap)\| h^{k-\hat m}\|.                                \] 
\end{lem}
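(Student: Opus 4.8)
The statement to prove is: under (A.1)–(A.3), if $x^{k-\hat m},\dots,x^k\in\B_r(\xopt)$ and $\kappa(H_k^\top H_k)\le M_H^2$, then
$\|\hat x^k - x^{k-\hat m}\|\le \sqrt m\,M_H\kap\,\|h^{k-\hat m}\|$ and $\|\hat g^k - x^{k-\hat m}\|\le (1+\sqrt m\,M_H\kap)\|h^{k-\hat m}\|$.

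The plan is to work directly from the closed-form expression for the Anderson coefficients, $\alpha^k=-(H_k^\top H_k)^{-1}H_k^\top h^{k-\hat m}$, and from the definitions $\hat x^k = x^{k-\hat m}+X_k\alpha^k$ and $\hat g^k = g^{k-\hat m}+G_k\alpha^k$. For the first bound I would write $\|\hat x^k - x^{k-\hat m}\| = \|X_k\alpha^k\|$ and then use the linear relation between $X_k$ and $H_k$ established in the proof of \cref{prop-HXeq}: there we saw $A_kX_k = -LH_k$ with $A_k = \nabla^2 f(x^0)+E_k$ (taking $x^0 = x^{k-\hat m}$, $\hat m=m$, $k=m$ WLOG). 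Since $A_k$ is close to the Hessian, which is $\ge\mu I$ on $\B_r(\xopt)$ — actually the cleaner route avoiding a second perturbation argument is to use $X_k = -L A_k^{-1}H_k$ only if we have control of $A_k^{-1}$; alternatively, and more simply, I would bound $\|X_k\alpha^k\|$ via $\|X_k\alpha^k\| = \tfrac1L\|A_k^{-1}H_k\alpha^k\|$ is awkward. The slicker argument: note $H_k\alpha^k = -(I - H_k(H_k^\top H_k)^{-1}H_k^\top)h^{k-\hat m} + $ (projection terms) — actually $H_k\alpha^k = -P\,h^{k-\hat m}$ where $P = H_k(H_k^\top H_k)^{-1}H_k^\top$ is an orthogonal projection, so $\|H_k\alpha^k\|\le\|h^{k-\hat m}\|$. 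The cleanest path, then, is to relate $X_k\alpha^k$ to $H_k\alpha^k$ directly: since each column of $X_k$ is $x^{i}-x^{k-\hat m}$ and the corresponding column of $H_k$ is $\tfrac1L(\nabla f(x^{k-\hat m})-\nabla f(x^i))$, strong convexity on $\B_r(\xopt)$ gives, for any vector $v$, $\|X_k v\| = \|\sum v_i(x^i - x^{k-\hat m})\|$ and $\|H_k v\| = \tfrac1L\|\sum v_i(\nabla f(x^i)-\nabla f(x^{k-\hat m}))\| = \tfrac1L\|\bar F X_k v\|\ge \tfrac{\mu}{L}\|X_k v\| = \tfrac{1}{\kap}\|X_k v\|$, where $\bar F$ is the averaged-Hessian-type matrix appearing in \eqref{contract_g} — but here the average must be taken along the appropriate segments, so more carefully one writes $\nabla f(x^i)-\nabla f(x^{k-\hat m}) = \big(\int_0^1\nabla^2 f(x^{k-\hat m}+t(x^i-x^{k-\hat m}))\,dt\big)(x^i-x^{k-\hat m})$ for each $i$; since all these integrands are $\succeq \mu I$, one can still conclude $\langle H_k v, X_k v\rangle \ge \tfrac1L\cdot\tfrac{\mu}{1}\cdot(\text{something})$. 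The honest way is: for a single index, $\|\tfrac1L(\nabla f(x^i)-\nabla f(x^{k-\hat m}))\|\ge \tfrac\mu L\|x^i-x^{k-\hat m}\|$, but summing up requires the matrix inequality $\|H_k v\|\ge\tfrac1L\sigma_{\min}(\cdots)$; I would therefore invoke the $A_k$-construction from \cref{prop-HXeq}, namely $A_kX_k=-LH_k$ with $\sigma_{\min}(A_k)$ bounded below, to get $\|X_k v\| = L\|A_k^{-1}H_k v\|\le L\,\sigma_{\max}(A_k^{-1})\|H_k v\|$; combined with $\sigma_{\min}(A_k)\ge\tfrac1{\sqrt2}\mu$ (hence $\sigma_{\max}(A_k^{-1})\le\sqrt2/\mu$) this would give a factor $\sqrt2\kap$ rather than $\kap$ — so to get the clean $\kap$ in the statement I suspect the intended argument uses $\sigma_{\min}(A_k)\ge\mu$ directly, which holds in the limit or under a tighter neighborhood, or simply uses the pointwise strong-convexity bound column-wise together with a Cauchy–Schwarz / $\sqrt m$ trick.

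Given that, the concrete steps I would carry out are: (1) Recall $\alpha^k = -(H_k^\top H_k)^{-1}H_k^\top h^{k-\hat m}$ and note $P := H_k(H_k^\top H_k)^{-1}H_k^\top$ is an orthogonal projector, so $\|H_k\alpha^k\| = \|P h^{k-\hat m}\|\le\|h^{k-\hat m}\|$. Hmm, but the lemma has a $\sqrt m M_H\kap$ factor, not just $\kap$, so actually I should bound $\|\alpha^k\|$ itself: $\|\alpha^k\|\le\|(H_k^\top H_k)^{-1}H_k^\top\|\,\|h^{k-\hat m}\| = \sigma_{\min}(H_k)^{-1}\|h^{k-\hat m}\|$. (2) By \eqref{eq_bound_H}-type reasoning, combined with the condition-number bound $\kappa(H_k^\top H_k)\le M_H^2$, lower-bound $\sigma_{\min}(H_k)$: since $\|H_k\| = \sigma_{\max}(H_k)\ge M_h^k\ge \tfrac1\kap M_x^k$ — no, the cleaner bound is $\sigma_{\min}(H_k)\ge \sigma_{\max}(H_k)/M_H$ — and then relate $\|X_k\|\le\sqrt m M_x^k$ to $\|H_k\|$ via strong convexity, $\|H_k\|\ge\tfrac1\kap\cdot(\text{typical column length of }X_k)$. (3) Then $\|\hat x^k - x^{k-\hat m}\| = \|X_k\alpha^k\|\le\|X_k\|\|\alpha^k\|\le \|X_k\|\sigma_{\min}(H_k)^{-1}\|h^{k-\hat m}\|$; bounding $\|X_k\|\le\sqrt m\max_i\|x^i-x^{k-\hat m}\| = \sqrt m M_x^k$ and $\sigma_{\min}(H_k)\ge \sigma_{\max}(H_k)/M_H\ge \|h^j - h^{k-\hat m}\|/M_H = \tfrac1L\|\nabla f(x^j)-\nabla f(x^{k-\hat m})\|/M_H\ge \tfrac1{\kap M_H}\|x^j-x^{k-\hat m}\| = \tfrac{M_x^k}{\kap M_H}$ (with $j$ the maximizing index, using (A.2) exactly as in \eqref{eq_bound_H}), we get $\|X_k\alpha^k\|\le\sqrt m M_x^k\cdot\tfrac{\kap M_H}{M_x^k}\|h^{k-\hat m}\| = \sqrt m M_H\kap\|h^{k-\hat m}\|$, which is the first claim. (4) For the second claim, write $\hat g^k - x^{k-\hat m} = (g^{k-\hat m} - x^{k-\hat m}) + G_k\alpha^k = h^{k-\hat m} + G_k\alpha^k$, so $\|\hat g^k - x^{k-\hat m}\|\le\|h^{k-\hat m}\| + \|G_k\alpha^k\|$; observe that the $i$-th column of $G_k$ is $g(x^{i})-g(x^{k-\hat m})$ and, since $g$ is a contraction on $\B_r(\xopt)$ with constant $1-\tfrac1\kap\le 1$ by \eqref{contract_g}, we get $\|G_k\|\le\sqrt m\max_i\|g(x^i)-g(x^{k-\hat m})\|\le\sqrt m M_x^k$, hence $\|G_k\alpha^k\|\le\sqrt m M_x^k\cdot\sigma_{\min}(H_k)^{-1}\|h^{k-\hat m}\|\le\sqrt m M_H\kap\|h^{k-\hat m}\|$ by the same estimate as in (3), and adding the $\|h^{k-\hat m}\|$ term yields $(1+\sqrt m M_H\kap)\|h^{k-\hat m}\|$.

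The main obstacle I anticipate is the lower bound on $\sigma_{\min}(H_k)$: one needs to convert the \emph{relative} condition-number hypothesis $\kappa(H_k^\top H_k)\le M_H^2$ into an \emph{absolute} lower bound on the smallest singular value, and this requires pinning down $\sigma_{\max}(H_k)$ in terms of $M_x^k$ — which is exactly where strong convexity (A.2) enters, via the chain $\sigma_{\max}(H_k)\ge\|h^j-h^{k-\hat m}\| = \tfrac1L\|\nabla f(x^j)-\nabla f(x^{k-\hat m})\|\ge\tfrac\mu L\|x^j-x^{k-\hat m}\| = \tfrac{M_x^k}{\kap}$ for the maximizing index $j$, reproducing \eqref{eq_bound_H}. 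A secondary (essentially bookkeeping) point is the $\sqrt m$ factor: one must be careful that $\|X_k\|\le\|X_k\|_F\le\sqrt m\,M_x^k$ and likewise for $G_k$, rather than trying to use the spectral norm directly, since no lower bound on the spread of the columns is available. Everything else is a routine chain of triangle inequalities and submultiplicativity, with no genuinely hard step once the $\sigma_{\min}(H_k)$ bound is in place.
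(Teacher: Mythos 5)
Your proof is correct, and the first inequality is established exactly as in the paper: bound $\|\alpha^k\|\le \sigma_{\min}(H_k)^{-1}\|h^{k-\hat m}\|$, convert the condition-number hypothesis into the absolute lower bound $\sigma_{\min}(H_k)\ge \sigma_{\max}(H_k)/M_H \ge M_x^k/(\kap M_H)$ via the strong-convexity chain of \eqref{eq_bound_H}, and pair this with $\|X_k\|\le\|X_k\|_F\le\sqrt{m}\,M_x^k$ so that the $M_x^k$ factors cancel. Where you diverge is the second inequality: the paper writes $\hat g^k-x^{k-\hat m}=\hat h^k+(\hat x^k-x^{k-\hat m})$ and bounds $\|\hat h^k\|\le\|h^{k-\hat m}\|$ by the optimality of $\alpha^k$ in the least-squares subproblem \eqref{AAsubp} (the minimal linearized residual is no larger than its value at $\alpha=0$), whereas you write $\hat g^k-x^{k-\hat m}=h^{k-\hat m}+G_k\alpha^k$ and bound $\|G_k\alpha^k\|\le\|G_k\|_F\|\alpha^k\|\le\sqrt{m}\,M_x^k\cdot\frac{M_H\kap}{M_x^k}\|h^{k-\hat m}\|$ using nonexpansiveness of $g$ on $\B_r(\xopt)$ from \eqref{contract_g}. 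Both routes yield the identical constant $(1+\sqrt{m}M_H\kap)$; the paper's version is marginally cleaner in that it reuses the already-obtained bound on $\|\hat x^k - x^{k-\hat m}\|$ and needs only the trivial optimality observation, while yours requires one extra appeal to the contraction property but avoids introducing $\hat h^k$ at all. One expository remark: the long deliberation in your first paragraph about whether to route the argument through $A_k^{-1}$ (which, as you correctly suspected, would cost an extra factor of $\sqrt{2}$) is unnecessary — the column-wise strong-convexity estimate you settle on is precisely the paper's \eqref{eq_bound_H} and gives the clean constant $\kap$.
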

\begin{proof}
    We start with bounding the coefficient $\alpha^k$. As mentioned, the closed-form expression for $\alpha^k$ is given by $\alpha^k=-(H_k^\top H_k)^{-1}H_k^\top h^{k-\hat m}$. Therefore, it holds that
    \[   \|\alpha^k\|\leq \sqrt{\|(H_k^\top H_k)^{-1}\|} \|h^{k-\hat m}\|\leq \frac{M_H}{\|H_k\|}  \|h^{k-\hat m}\|\leq \frac{M_H\kap}{M_x^k}\|h^{k-\hat m}\|,                   \]
    where we used \eqref{eq_bound_H} in the last inequality. Due to $\|X_k\|_F \leq \sqrt{\hat m} M_x^k$, the definition of $\hat h^k$, and \eqref{AAsubp}, this implies
    \[   \|\hat x^k-x^{k-\hat m}\|=\|X_k\alpha^k\|\leq \|X_k\|\|\alpha^k\|\leq \|X_k\|_F\|\alpha^k\|\leq \sqrt{m}M_H\kap\|h^{k-\hat m}\|                 \]
    and $\|\hat g^k-x^{k-\hat m}\|\leq \|\hat h^k\|+\|\hat x^k-x^{k-\hat m}\|\leq (1+\sqrt{m}M_H\kap)\|h^{k-\hat m}\|$.
\end{proof}

\begin{prop}
    \label{prop-local}
    Let \ref{A1}--\ref{A5} be satisfied and let $\{x^k\}_k$ be generated by \cref{algo1}. Then there exists a neighborhood $U$ of $\xopt$ such that if $x^0\in U$, it follows $\{x^k\}_k\subset U$ and $\|h(x^{k+1})\|\leq (1-\frac{1}{2\kap})\|h(x^k)\|$ for all $k\in\mathbb{N}$.
\end{prop}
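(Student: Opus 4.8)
The plan is to establish \cref{prop-local} by a bootstrapping argument that combines the local contraction estimate \eqref{contract_g}, the one-cycle q-linear estimate from \cref{q_linear_conv}, and the locality bounds from \cref{lemma3-5}, together with the equivalence of conditions \ref{A5} and boundedness of $\kappa(H_k^\top H_k)$ from \cref{prop-HXeq}. First I would fix the radius of the target neighborhood $U = \B_\rho(\xopt)$ with $\rho>0$ chosen small enough that: (a) $\B_\rho(\xopt)\subseteq U_1\cap U_2$ from \cref{prop-HXeq} (with $M_X = M$, so that \ref{A5} gives $\kappa(H_k^\top H_k)\le 4\kap^2 M^2 =: M_H^2$); (b) the $\mathcal O(\cdot)$-term in \eqref{eq:hk-q} is dominated, i.e., there is a uniform constant $C>0$ such that whenever all the relevant points lie in $\B_\rho(\xopt)$, the right-hand side of \eqref{eq:hk-q} is at most $(1-\frac1\kap)\|h^k\| + C\|h^k\|\sum_{i=k-\hat m}^k\|h^i\|$, and $C\cdot(\text{something like }(m+1)\sup\|h^i\|) \le \frac1{2\kap}$; and (c) the constant $(1+\sqrt m M_H \kap)$ from \cref{lemma3-5} times $\|h^{k-\hat m}\|$ keeps the auxiliary points $\hat x^k, \hat g^k$ inside $U_1$.

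Next I would argue by induction on the cycle index. Within a single cycle starting at an iterate $x^{k-\hat m}\in \B_{\rho/2}(\xopt)$, say, one shows inductively over $\hat m$ that all iterates $x^{k-\hat m},\dots,x^k$, as well as $g(x^i)$ and $\hat x^k, \hat g^k$, remain in $\B_\rho(\xopt)\subseteq U_1$: the key point is that $\|h(x^i)\| = \|\nabla f(x^i)\|/L \le \|x^i - \xopt\|$ is small (using $\nabla f(\xopt)=0$ and $L$-smoothness, or the contraction bound), so by \cref{lemma3-5} and the triangle inequality the displacement from $x^{k-\hat m}$ is $\mathcal O(\|h^{k-\hat m}\|)$, which is $\mathcal O(\|x^{k-\hat m}-\xopt\|)$. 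With all points in $U_1$, \cref{q_linear_conv} applies and gives $\|h^{k+1}\|\le (1-\frac1\kap)\|h^k\| + C\|h^k\|\sum_{i}\|h^i\|$; by the smallness of $\rho$ the sum $\sum_{i=k-\hat m}^k\|h^i\|$ is controlled (each term is a geometric-type fraction of $\|h^{k-\hat m}\|$, hence bounded by a constant multiple of $\|h^{k-\hat m}\| \le \rho$), so the perturbation is absorbed into $\frac1{2\kap}\|h^k\|$, yielding $\|h^{k+1}\|\le (1-\frac1{2\kap})\|h^k\|$. For the restart step $\hat m = 0$, we have $x^{k+1} = g(x^k)$ and \eqref{contract_g} directly gives $\|h^{k+1}\| = \|g(g(x^k)) - g(x^k)\| \le (1-\frac1\kap)\|h^k\|$ since consecutive iterates lie in $\B_r(\xopt)$. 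Chaining these per-iteration contractions shows $\|h^{k+1}\|\le(1-\frac1{2\kap})\|h^k\|$ for all $k$, and in particular $\|h^k\|$ decreases geometrically; since $\|x^k-\xopt\|$ is controlled by $\|h^k\|$ up to constants via strong convexity, the whole sequence stays in $U$, closing the induction.

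The main obstacle I anticipate is the bookkeeping needed to make the neighborhoods nest correctly: \cref{q_linear_conv} requires not just $x^{k-\hat m},\dots,x^k \in U_1$ but also the \emph{look-ahead} points $g(x^i)$ and the Anderson-combination points $\hat x^k,\hat g^k \in U_1$, and simultaneously \cref{prop-HXeq} is used with a fixed $M_X = M$ to convert \ref{A5} into a bound $M_H$ on $\kappa(H_k^\top H_k)$ which then feeds into \cref{lemma3-5} — so the constant $M_H$ appears inside the radius bound that must in turn keep all points in the neighborhood where \cref{prop-HXeq} was valid. This circularity is resolved by choosing $\rho$ last, after all constants $M$, $M_H = 2\kap M$, $C$, and $\sqrt m M_H \kap$ have been fixed, small enough that the worst-case displacement $(2 + \sqrt m M_H\kap)\rho$ over a cycle plus the geometric decay still fits inside the original radius; a clean way to organize this is to first prove the per-cycle invariance ``$x^{k-\hat m}\in\B_{\rho'}(\xopt) \implies$ all cycle points in $\B_\rho(\xopt)$ and $x^{k+1-(\hat m+1\bmod m+1)}\in \B_{\rho'}(\xopt)$'' for suitable $\rho' < \rho$, and only then iterate over cycles. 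A secondary technical point is ensuring the hidden constant in the $\mathcal O(\cdot)$ of \eqref{eq:hk-q} is genuinely uniform over the neighborhood (not depending on $k$), which follows because the proof of \cref{q_linear_conv} in \cref{app:q-proof} only uses the uniform constants $L$, $\mu$, $L_H$, $m$, $M$.
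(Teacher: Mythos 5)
Your within-cycle analysis matches the paper's proof step for step: \ref{A5} together with \cref{prop-HXeq} bounds $\kappa(H_k^\top H_k)$, \cref{lemma3-5} keeps $\hat x^k$, $\hat g^k$ and the iterates within $\mathcal O(\|h^{k-\hat m}\|)$ of the cycle's starting point so that \cref{q_linear_conv} is applicable, the error term in \eqref{eq:hk-q} is absorbed into $\frac{1}{2\kap}\|h^k\|$ by shrinking the neighborhood, and the restart step is handled directly by the contraction \eqref{contract_g}. The gap is in how you close the induction \emph{across} cycles. You measure locality by the Euclidean distance $\|x-\xopt\|$ and propose the per-cycle invariance ``$x^{\text{cycle start}}\in\B_{\rho'}(\xopt)\implies x^{\text{next cycle start}}\in\B_{\rho'}(\xopt)$''. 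This step fails in general: the quantity that contracts each iteration is $\|h\|$, and converting back to distances costs a factor $\kap$ via \ref{A2}, so the best you get is $\|x^{m+1}-\xopt\|\le\kap\|h(x^{m+1})\|\le\kap(1-\tfrac{1}{2\kap})^{m+1}\|h(x^0)\|\le\kap(1-\tfrac{1}{2\kap})^{m+1}\|x^0-\xopt\|$, and the prefactor $\kap(1-\tfrac{1}{2\kap})^{m+1}$ exceeds $1$ unless $m+1\gtrsim 2\kap\ln\kap$. Hence one cycle need not return the iterate to the smaller ball, and the induction over cycles does not close as organized. The same mismatch affects the literal conclusion ``$x^0\in U\implies\{x^k\}_k\subset U$'': a Euclidean ball is not forward-invariant for $\AAr$, since within a cycle the distance to $\xopt$ can grow by a factor of order $\sqrt m M\kap^2$.

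The repair is exactly the device the paper uses: take $U=S_{\bar\epsilon}:=\{x:\|h(x)\|\le\bar\epsilon\}\cap\B_r(\xopt)$, a sublevel set of the residual norm, rather than a metric ball. This set is forward-invariant precisely because the per-iteration estimate $\|h(x^{k+1})\|\le(1-\tfrac{1}{2\kap})\|h(x^k)\|$ is monotone in the quantity defining $U$, and by \ref{A2} it is contained in the ball of radius $\kap\bar\epsilon$ around $\xopt$, so it can be fitted inside $U_1$ and $\B_r(\xopt)$ by shrinking $\bar\epsilon$ (the paper's choice $\bar\epsilon=\min\{\epsilon_1,\tfrac{\epsilon_2}{2+2\sqrt m\kap^2M},\tfrac{r}{1+(2\sqrt m\kap M+1)\kap}\}$ does precisely this bookkeeping). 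With that single change of invariant set your argument goes through and coincides with the paper's proof.
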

\begin{proof}
    We define $S_\epsilon=\{x\in\R^n:\|h(x)\|\leq \epsilon\}\cap \B_r(\xopt)$. Due to \ref{A2}, we obtain 
    \[  \|h(x)\|=\frac{1}{L}\|\nabla f(x)\|\geq \frac{\mu}{L}\|x-\xopt\| = \frac{1}{\kap} \|x-\xopt\| \quad \forall~x\in\B_r(\xopt).     \]
    Let $U_1$ be defined as in \cref{prop-HXeq} for $M_X =M$. The previous inequality implies that there is some $\epsilon_1>0$ such that $S_{\epsilon_1}\subset U_1$. Moreover, by \cref{q_linear_conv}, there exists another neighborhood $S_{\epsilon_2}$ such that if $x^{k},\dots,x^{k-\hat m},\hat x^k,\hat g^k\in S_{\epsilon_2}$, then we have
    \begin{equation} \label{eq:just-use-it-later}    \|h(\hat g^k)\|\leq (1-{(2\kap)^{-1}}) \|h(x^k)\|.         \end{equation}
    We now take $\bar\epsilon=\min\{\epsilon_1,\frac{\epsilon_2}{2+2\sqrt{m}\kap^2M},\frac{r}{1+(2\sqrt{m}\kap M+1)\kap}\}$ and set $U=S_{\bar\epsilon}$. Let us further suppose $x^0\in U$. We use an induction to show $\|h^{k}\|\leq (1-{(2\kap)^{-1}})\|h^{k-1}\|$ and $x^k\in U$ for all $k$. It is clear that we only need to prove this conclusion for $k=1,\dots,m+1$, since the analysis is identical for the next cycle of the restarted $\AAn$ method. We start with $k=1$. By definition, we have $x^1=g(x^0)$ and according to \eqref{contract_g}, it follows:
    \[  \|x^1-\xopt\|=\|g(x^0)-g(\xopt)\|\leq (1-{\kap^{-1}})\|x^0-\xopt\|\leq r.    \]
    This proves $x^1\in \B_r(\xopt)$. Moreover, it holds that:
    \[ \|h(x^1)\|=\|g(x^1)-x^1\|=\|g(x^1)-g(x^0)\|\leq (1-{\kap^{-1}})\|h(x^0)\|\leq \bar\epsilon,         \]
    which shows $\|h(x^1)\|\leq (1-{(2\kap)^{-1}})\|h(x^0)\|$ and $x^1\in S_{\bar\epsilon}$. Next, let us assume that the induction hypothesis is true for $i=1,\dots,k$ and let us prove the conclusion for $i = k+1$. Applying \cref{prop-HXeq} and \ref{A5}, we obtain $\kappa(H_k^\top H_k)\leq 4\kap^2M^2$ and using \cref{lemma3-5}, we can further infer:
    \[ \|x^{k+1}-\xopt\|\leq \|\hat g^k-x^0\|+\|x^0-\xopt\|\leq(1+2\sqrt{m}M\kap^2)\|h^0\|+\kap\|h^0\| \leq r. \] 
    This establishes $x^{k+1}\in\B_r(\xopt)$. Similarly, we also have $\hat x^k\in\B_r(\xopt)$ and it holds that
    \[ \|h(\hat x^k)\|\leq \|h(\hat x^k)-h(x^0)\|+\|h^0\|\leq \|\hat x^k-x^0\|+ \bar\epsilon\leq (1+2\sqrt{m}\kap^2M)\bar\epsilon\leq \epsilon_2 \]
   and $\|h(\hat g^k)\|\leq \epsilon_2$. Hence, by \eqref{eq:just-use-it-later}, we can conclude $\|h^{k+1}\|=\|h(\hat g^k)\|\leq(1-\frac{1}{2\kap})\|h^k\|$ and $x^{k+1}\in S_{\bar\epsilon}$.
\end{proof}

\subsection{Local Descent Properties of $\AAr$}
 
We now formulate and present one of the main theoretical results of this paper.
\begin{thm}
    \label{thm3-18}
    Let the conditions \ref{A1}--\ref{A5} hold and let $\{x^k\}_k$ be generated by \cref{algo1}. There is a neighborhood $U$ of $x^\star$ such that if $x^0\in U$, then we have:
    \begin{equation} \label{eq:super-duper}   f(x^{k+1})\leq f(g(x^k))+\mathcal O(\|\nabla f(x^{k-\hat m})\|^3) \end{equation}
    for all $k \in \mathbb N$, where $\hat m= \mathrm{mod}(k,m+1)$. 
\end{thm}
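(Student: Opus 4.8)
The plan is to transport \eqref{eq:super-duper} onto the second-order Taylor model of $f$ at $x^{k-\hat m}$, identify the $\AAr$ iterates on that model with iterates of the conjugate residual method (via $\GMRES$), and then run a $\CG$-style descent comparison, absorbing every approximation error into the $\mathcal O(\|\nabla f(x^{k-\hat m})\|^3)$ term. First I would localize and set up the model. If $\hat m=0$ the claim is trivial since $x^{k+1}=g(x^k)$, so assume $\hat m\ge 1$ and reindex the cycle to start at $x^0=x^{k-\hat m}$. Put $A:=\nabla^2 f(x^0)$ (so $\mu I\preceq A\preceq LI$ by \ref{A1}--\ref{A2}), let $\phi$ be the quadratic Taylor model of $f$ at $x^0$, and write $g_\phi(y):=y-\tfrac1L\nabla\phi(y)$ for its gradient step, which is a contraction. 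By \cref{prop-local} (and its proof), for $x^0$ close enough to $\xopt$ every iterate $x^i$ of the cycle and every auxiliary point $\hat x^i,\hat g^i$ stays within $\mathcal O(\|\nabla f(x^0)\|)$ of $x^0$; combined with \ref{A3} this gives $|f(y)-\phi(y)|=\mathcal O(\|\nabla f(x^0)\|^3)$, $\|\nabla f(y)-\nabla\phi(y)\|=\mathcal O(\|\nabla f(x^0)\|^2)$ and $\|\nabla\phi(y)\|=\mathcal O(\|\nabla f(x^0)\|)$ on the relevant region. Hence it is enough to prove the model analogue of \eqref{eq:super-duper}, and errors of order $\|\nabla f(x^0)\|^2$ in any of the iterates only cost $\mathcal O(\|\nabla f(x^0)\|^3)$ in function value.

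Next I would identify the iterates. By the classical $\AAnr$--$\GMRES$ equivalence (to be made precise in \cref{aa_gmres}), within the cycle $\hat x^i$ equals --- up to an $\mathcal O(\|\nabla f(x^0)\|^2)$ linearization error --- the $i$-th $\GMRES$ iterate for the perturbed system $(A+E_i)(x-x^0)=-\nabla f(x^0)$ with $\|E_i\|=\mathcal O(\|\nabla f(x^0)\|)$ (cf. \eqref{bound_Em}), and $\hat g^i=g^0+G_i\alpha^i$ differs from $g_\phi(\hat x^i)$ by $\mathcal O(\|\nabla f(x^0)\|^2)$. Since $A$ is symmetric positive definite and $E_i$ is small, the perturbation analysis of \cref{gmres_cr} replaces $\GMRES$ on $A+E_i$ by the conjugate residual iteration $\CR$ on $A$ at the cost of another $\mathcal O(\|\nabla f(x^0)\|^2)$ perturbation. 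Writing $\CR_j$ for the $j$-th $\CR$ iterate of $\min_x\phi$ from $x^0$, and using that $x^{k+1}=\hat g^{\hat m}$ and $x^k=\hat g^{\hat m-1}$ belong to the same cycle (so share the reference point $x^0$ and the Krylov hierarchy of $A$), this yields $x^{k+1}=g_\phi(\CR_{\hat m})+\mathcal O(\|\nabla f(x^0)\|^2)$ and $x^k=g_\phi(\CR_{\hat m-1})+\mathcal O(\|\nabla f(x^0)\|^2)$. Feeding this into the first step reduces the theorem to the \emph{exact} model inequality
\begin{equation}\label{eq:sk-core}
    \phi\big(g_\phi(\CR_{\hat m})\big)\;\le\;\phi\big(g_\phi(g_\phi(\CR_{\hat m-1}))\big).
\end{equation}

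Finally, for \eqref{eq:sk-core} I would use $\phi(y)-\min\phi=\tfrac12\|\nabla\phi(y)\|_{A^{-1}}^2$ and $\nabla\phi(g_\phi(y))=(I-\tfrac1L A)\nabla\phi(y)$ to rewrite it as $\|(I-\tfrac1L A)\nabla\phi(\CR_{\hat m})\|_{A^{-1}}\le\|(I-\tfrac1L A)^2\nabla\phi(\CR_{\hat m-1})\|_{A^{-1}}$. Since $g_\phi(\CR_{\hat m-1})$ lies in $x^0+\cK^{\hat m}(A,-\nabla f(x^0))$, the residual-minimality of $\CR_{\hat m}$ gives $\|\nabla\phi(\CR_{\hat m})\|\le\|(I-\tfrac1L A)\nabla\phi(\CR_{\hat m-1})\|$; converting this Euclidean bound, after the extra factor $I-\tfrac1L A$, into the weighted $A^{-1}$-bound is where the fine $\CR$--$\CG$ relationship of \cref{cr_cg} enters (in exact arithmetic $\CR_j$ lies on the segment between $\CG_{j-1}$ and $\CG_j$, the $\CG$ residuals are mutually orthogonal, and each $\CG$ step decreases $\phi$ by an exact line search, hence by at least the guaranteed decrease of a fixed $\tfrac1L$ gradient step), together with $0\prec A\preceq LI$, which makes $I-\tfrac1L A$ damp exactly the large-eigenvalue part of any residual. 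This also yields the byproduct $\phi(\CR_j)\le\phi(g_\phi(\CR_{j-1}))$, i.e. $\CR$ already descends no slower than gradient descent. Collecting the $\mathcal O(\|\nabla f(x^0)\|^3)$ error terms then gives \eqref{eq:super-duper}.

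I expect the main obstacle to be \eqref{eq:sk-core}: it is a genuine inequality, not a by-product of ``everything is $\mathcal O(\|\nabla f\|^2)$-close to the minimizer'' (both sides are generically $\Theta(\|\nabla f\|^2)$ above $\min\phi$), and it is false if one only uses ``$\CG$ minimizes $\phi$ over the Krylov space'' or ``a gradient step decreases $\phi$'' --- $\phi$-value ordering is \emph{not} preserved by a gradient step --- so it genuinely requires the joint structure of $\CR$, $\CG$, and the fixed step $\tfrac1L$. A secondary difficulty is the perturbation analysis in the second step: showing that the $\mathcal O(\|\nabla f\|)$ perturbation $E_i$ of the (otherwise symmetric) system matrix moves the $\GMRES$ iterates by only $\mathcal O(\|\nabla f\|^2)$ and that this is not amplified along the cycle; this is precisely where \ref{A5} (equivalently, the local bound on $\kappa(H_k^\top H_k)$ from \cref{prop-HXeq}) is needed, since it controls $(H_k^\top H_k)^{-1}H_k^\top$ under perturbations.
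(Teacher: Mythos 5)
Your overall architecture coincides with the paper's: reduce to the quadratic model at the cycle's starting point, identify the $\AAr$ iterates within a cycle with $\GMRES$ applied to a perturbed symmetric system (\cref{prop4-4}), pass from $\GMRES$ to $\CR$ on the unperturbed Hessian at cost $\mathcal O(\|b\|^2)$ in the iterates (\cref{thm4-6}), and reduce the claim to the exact model inequality $\vp(\bar x^{k}_R)\leq\vp(\tilde x^{k-1}_R)$, which is precisely the paper's \cref{coro4-9}. You also correctly flag that this last inequality is the real content of the theorem and cannot follow from Krylov optimality alone, since a gradient step does not preserve the ordering of $\vp$-values. Two minor remarks on the earlier steps: the identification in \cref{prop4-4} is exact rather than up to $\mathcal O(\|b\|^2)$, because a single interpolating matrix $A=\nabla^2 f(x^0)+E_m$ satisfies $A(x^i-x^0)=\nabla f(x^i)-\nabla f(x^0)$ for every $i$ in the cycle; and converting the $\mathcal O(\|b\|^2)$ iterate errors into $\mathcal O(\|b\|^3)$ function-value errors requires the bound $\|\nabla\vp(\bar x^k_R)\|\leq\|b\|$, which the paper obtains from \cref{lemma4-7} together with \cref{cr_property} --- your ``errors of order $\|\nabla f\|^2$ only cost $\mathcal O(\|\nabla f\|^3)$'' implicitly relies on exactly this.

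The genuine gap is that you do not prove the core inequality $\phi(g_\phi(\CR_{\hat m}))\leq\phi(g_\phi(g_\phi(\CR_{\hat m-1})))$. The ingredients you list --- residual minimality of $\CR$, orthogonality of the $\CG$ residuals, and exact line search beating a $\tfrac1L$ gradient step --- do not suffice: residual minimality gives the Euclidean bound $\|\nabla\phi(\CR_{\hat m})\|\leq\|(I-\tfrac1LB)\nabla\phi(\CR_{\hat m-1})\|$ with $B=\nabla^2 f(x^0)$, but what you need is a $B^{-1}$-weighted inequality with an additional damping factor $I-\tfrac1LB$ on the left, and ``converting'' one into the other is exactly the step that admits no soft argument (the two norms are not comparable with constant $1$, and the damping acts anisotropically across the spectrum). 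The $\CR$--$\CG$ relation you invoke --- that $\CR_j$ lies on the segment between $\CG_{j-1}$ and $\CG_j$ --- is also not the one that helps (the standard smoothing relation places the minimal-residual iterate on the segment between $\CR_{j-1}$ and $\CG_j$), and it is not what the paper uses. The paper instead observes that $y^j:=B^{1/2}\CR_j$ is itself the $j$-th $\CG$ iterate for the transformed system $B(y-y^*)=0$, so that $\vp$-values of $\CR$ iterates become squared Euclidean distances $\|y-y^*\|^2$ along a $\CG$ trajectory; the desired inequality becomes $\|\bar y^{k}-y^*\|^2\leq\|\tilde y^{k-1}-y^*\|^2$ and is established in \cref{thm4-9} by a Pythagorean decomposition through the projection $y^{(k+1)}$, an explicit expansion using the Hestenes--Stiefel identities of \cref{prop:cg}, and a case analysis showing that the resulting coefficients of $\|r^{k-1}\|^2$ and $\|r^k\|^2$ are nonnegative whenever $a_{k-1}L\geq\nu\geq1$. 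The byproduct you assert (that $\CR$ descends no slower than gradient descent) is likewise nontrivial and is the separate computation of \cref{thm4-10}. Until an argument of this kind is supplied, the step you yourself identify as the crux remains unproven.
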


As already outlined, the proof of \cref{thm3-18} relies on subtle connections between $\AAr$, $\GMRES$, $\CR$, and $\CG$. We will establish and discuss these connections step-by-step in the subsequent subsections.

Before proceeding with further details, let us briefly discuss $\GMRES$, $\CR$, and $\CG$, cf. \cite{saad2003iterative}. All of these algorithms are designed to solve linear systems of form $Ax=b$ via Krylov subspace techniques. For $\GMRES$ and $\CR$, the $k$-th iterate is the point in the $k$-th Krylov subspace $\cK^{k}(A,b)$ with minimal residual norm $\|Ax-b\|$. Here, $\CR$ typically requires the matrix $A$ to be symmetric positive semidefinite, which can be exploited in (faster) implementations. No additional assumptions (on $A$) need to be made when applying $\GMRES$. $\CG$ is connected to $\CR$ and requires $A$ to be symmetric, positive (semi)definite. Instead of finding elements with minimal residual norm, $\CG$ aims at minimizing the quadratic form $\frac12 x^\top Ax-b^\top x$ within the subspace $\cK^{k}(A,b)$.

We now summarize the core components of our proof. In \cref{aa_gmres}, we show that the $\AAr$ iterate $x^{k+1}$ coincides with an iterate $\bar x^k_G$ that can be generated via performing an additional gradient step on the $\GMRES$ iterate $x^k_G$. Here, $\GMRES$ is applied to a non-symmetric perturbed linear system $A(x-x^0)=b$ that is connected to $\AAr$. In addition, the gradient step $g(x^k)$ can be viewed as applying two gradient steps on the previous $\GMRES$ iterate $x^{k-1}_G$ resulting in $\tilde x^{k-1}_G$. Since the system matrix $A$ can be interpreted as a perturbed version of the symmetric Hessian $\nabla^2 f(x^0)$, our idea is to run $\CR$ on the linear system $B(x-x^0)=b$ with $B=\nabla^2f(x^0)$ and $b = -\nabla f(x^0)$ and to bound the differences between the $\CR$ and $\GMRES$ iterates. In \cref{gmres_cr}, we verify that this error has order $\mathcal O(\|b\|^2)$. Finally, in \cref{cr_cg}, we connect $\CR$ and $\CG$ and use the rich computational properties of $\CG$, \cite{hestenes1952methods}, to show that the $\CR$ iterates achieve the desired descent on a local quadratic model of $f$. In the last subsection, we combine these different components to prove that the $\AAr$ iterate $x^{k+1}$ itself decreases the objective function value up to certain higher-order error terms.

\subsection{Connecting $\AAr$ and $\GMRES$}
\label{aa_gmres}
We now establish equivalence of $\GMRES$ and $\AAr$ when running one restarting cycle. Let us introduce the matrix $A := A_m = \nabla^2f(x^0)+E_m$, where $A_m$ and $E_m$ have been defined in the proof of \cref{prop-HXeq}, i.e., it holds that
\begin{equation} A = \nabla^2 f(x^0) + E_m, \quad E_m = B_m(X_m^\top X_m)^{-1}X_m^\top, \quad B_m = [b_1,\dots, b_m], \label{eq:lets-define-A} \end{equation} 
where $b_i = \nabla f(x^i) - \nabla f(x^0) - \nabla^2 f(x^0)(x^i-x^0)$, $i \in [m]$.

The matrix $A$ can be utilized to construct a new perturbed gradient mapping $\bar g(x) := x-\frac{1}{L}(A(x-x^0)+\nabla f(x^0))$. Recalling \eqref{eq:why-not-use-it}, it can be shown that the function $\bar g$ is exact at $x^k$ for all $k=0,\dots,m$, i.e., we have $\bar g(x^k)=g(x^k)$. We now study $\GMRES$ applied to the linear system $A(x-x^0) = -\nabla f(x^0)$. While there are various implementations of $\GMRES$~\cite{saad1986gmres,bai1994newton}, each of these variants will yield the same iteration sequence $\{x_G^k\}_k$. The following proposition (which holds for general input data $A \in \R^{n \times n}$ and $b, x^0 \in \R^n$) is taken from \cite[Equation (4)]{potra2013characterization} and characterizes the iterates $x_G^k$ more explicitly. 
\begin{prop}
    \label{gmres_property}
   Let $A \in \R^{n \times n}$ be nonsingular and let $b, x^0 \in \R^n$ be given. Suppose further that $\{x^k_G\}_k$ is generated by $\GMRES$ to solve the system $A(x-x^0)=b$ with $x^0_G=x^0$. Then, we have:
    \[   x^k_G={\argmin}_{x\in x^0+\cK^k(A,b)}~\|A(x-x^0)-b\|^2.                 \]
\end{prop}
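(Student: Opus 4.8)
The plan is to reduce the statement to the defining minimal-residual property of $\GMRES$ and then to argue that the minimizer in question is unique, so that every standard implementation of $\GMRES$ produces the same sequence $\{x_G^k\}_k$ and it coincides with the asserted $\argmin$. Writing $y := x - x^0$ and $\beta := \|b\|$, the system becomes $Ay = b$ with initial guess $y^0 = 0$ and residual $r^0 = b$; thus $x \in x^0 + \cK^k(A,b)$ if and only if $y \in \cK^k(A,b)$. First I would recall the Arnoldi construction underlying $\GMRES$: starting from $v_1 = b/\beta$, the Arnoldi process (e.g., with modified Gram--Schmidt) produces a matrix $V_{k+1} = [v_1,\dots,v_{k+1}]$ with orthonormal columns whose first $j$ columns span $\cK^j(A,b)$ for $j \le k$, together with an $(k{+}1)\times k$ upper Hessenberg matrix $\bar H_k$ satisfying $AV_k = V_{k+1}\bar H_k$, provided the process does not break down before step $k$ (equivalently, $\dim \cK^k(A,b) = k$).

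Given this, any $y \in \cK^k(A,b)$ can be written as $y = V_k z$ with $z \in \R^k$, and, using that $V_{k+1}$ has orthonormal columns and $b = \beta v_1 = \beta V_{k+1} e_1$,
\[
\|Ay - b\| = \|AV_k z - \beta v_1\| = \|V_{k+1}(\bar H_k z - \beta e_1)\| = \|\bar H_k z - \beta e_1\|.
\]
Hence minimizing $\|A(x-x^0)-b\|$ over $x \in x^0 + \cK^k(A,b)$ is equivalent to the small least-squares problem $\min_{z\in\R^k}\|\bar H_k z - \beta e_1\|$, which is exactly the subproblem solved inside $\GMRES$; its solution $z_k$ yields $x_G^k = x^0 + V_k z_k$. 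This establishes the claimed identity whenever no breakdown occurs.

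Next I would address the breakdown case and implementation-independence. If the Arnoldi recurrence terminates (``lucky'' breakdown) at some step $j \le k$, then $A\,\cK^j(A,b) = \cK^j(A,b)$, and since $A$ is nonsingular we get $b = A(A^{-1}b) \in A\,\cK^j(A,b) = \cK^j(A,b)$, so the exact solution $y^\star = A^{-1}b$ already lies in $\cK^j(A,b)$; consequently $\cK^i(A,b) = \cK^j(A,b)$ for all $i \ge j$ and both sides of the asserted identity equal $x^0 + y^\star$ for $i \ge j$. For an arbitrary implementation, I would invoke uniqueness: the residual at $x = x^0 + y$ is $b - Ay$, and minimizing its norm over $y \in \cK^k(A,b)$ amounts to computing the orthogonal projection of $b$ onto the subspace $A\,\cK^k(A,b)$, which exists and is unique; moreover $y \mapsto Ay$ is injective on $\cK^k(A,b)$ because $A$ is nonsingular, so the minimizing $y$, and hence $x_G^k = x^0 + y$, is uniquely determined. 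Since every standard realization of $\GMRES$ (Arnoldi/MGS, Householder, GCR, etc.) is by construction the minimal-residual iterate over $x^0 + \cK^k(A,b)$, all of them return this same unique point, which proves the proposition.

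I do not anticipate a serious obstacle: the result is in essence the defining property of $\GMRES$, and the only points requiring care are the bookkeeping around a possible Arnoldi breakdown and the observation that nonsingularity of $A$ upgrades uniqueness of the projected residual to uniqueness of the iterate $x_G^k$ itself.
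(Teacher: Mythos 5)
Your argument is correct, but note that the paper does not actually prove this proposition: it is imported verbatim from the literature (cited as Equation (4) of Potra and Engler's characterization of $\GMRES$), and the surrounding text merely remarks that all standard implementations produce the same sequence $\{x_G^k\}_k$. What you have written is the standard self-contained derivation of that cited fact: the Arnoldi relation $AV_k = V_{k+1}\bar H_k$ reduces the residual minimization over $x^0+\cK^k(A,b)$ to the small least-squares problem $\min_z\|\bar H_k z-\beta e_1\|$ that $\GMRES$ solves internally, the lucky-breakdown case is handled by observing that nonsingularity of $A$ forces $A^{-1}b\in\cK^j(A,b)$ once $A\,\cK^j(A,b)=\cK^j(A,b)$, and injectivity of $A$ on $\cK^k(A,b)$ upgrades uniqueness of the projection of $b$ onto $A\,\cK^k(A,b)$ to uniqueness of the iterate itself, which is exactly the implementation-independence the paper asserts without proof. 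All three steps are sound; the only cosmetic slip is that the first $j$ columns of $V_{k+1}$ span $\cK^j(A,b)$ for $j\le k+1$, not just $j\le k$, but nothing in the argument depends on this. In short, you have supplied a complete and correct proof of a statement the paper simply cites.
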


Next, we verify the equivalence of $\AAr$ and $\GMRES$ (in one restarting cycle) in the general nonlinear setting. As we will see, the matrix $A$ in \eqref{eq:lets-define-A} and the perturbed gradient mapping $\bar g$ will play an important role when connecting $\AAr$ and $\GMRES$. We further note that the linear case has been already covered in \cite[Proposition 2]{potra2013characterization}.
\begin{prop}
    \label{prop4-4}
    Let $\{x^k\}_{k=0,\dots,m+1}$ be generated by~\cref{algo1} and suppose that \ref{A5} is satisfied. Let the sequence $\{x^k_G\}_{k=0,\dots,m}$ be generated by $\GMRES$ applied to $A(x-x^0)=-\nabla f(x^0)$ with initial point $x^0_G=x^0$, where $A$ is defined as in \eqref{eq:lets-define-A}. Suppose that the matrix $A$ is nonsingular. Setting $\hat x^0 :=x^0$, it then holds that 
    \[ \bar x^{k}_G := \bar g(x_G^k) =x^{k+1} \quad \text{and} \quad x^{k}_G=\hat x^{k} \quad \forall~k = 0,\dots,m. \]
    In particular, we have $\kappa((\bar X^k_G)^\top \bar X^k_G)\leq M^2$ for each $k=1,\dots,m$, where $\bar X^k_G:=[\bar x^0_G-x^0,\dots,\bar x^{k-1}_G-x^0]$.
\end{prop}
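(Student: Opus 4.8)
The plan is to prove \cref{prop4-4} by induction on $k$, simultaneously establishing the two identities $\bar x^k_G = \bar g(x^k_G) = x^{k+1}$ and $x^k_G = \hat x^k$, and then deducing the condition-number bound as a corollary. The base case $k=0$ is immediate: $x^0_G = x^0 = \hat x^0$ by definition, and $\bar g(x^0_G) = \bar g(x^0) = g(x^0) = x^1$ since $\bar g$ is exact at $x^0$ (using \eqref{eq:why-not-use-it}, which gives $A(x^i - x^0) = \nabla f(x^i) - \nabla f(x^0)$, hence $\bar g(x^k) = g(x^k)$ for all $k = 0,\dots,m$) and since the first $\AAr$ step in a cycle is a plain gradient step.

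For the inductive step, I would use the explicit variational characterization of the $\GMRES$ iterate from \cref{gmres_property}, namely $x^k_G = \argmin_{x \in x^0 + \cK^k(A,b)} \|A(x-x^0) - b\|^2$ with $b = -\nabla f(x^0)$, and compare it with the $\AAr$ subproblem \eqref{AAsubp}. The key observation is that the $\AAr$ iterate $\hat x^k = x^0 + X_k \alpha^k$ minimizes the \emph{linearized} residual $\|\hat h^k\|^2 = \|h(x^0) + H_k \alpha\|^2$ over $\alpha \in \R^k$, and since $A_k X_k = -L H_k$ (shown in the proof of \cref{prop-HXeq}, with $A = A_m$ agreeing with $A_k$ on the relevant columns because $E_k$ acts identically on the span of the $x^i - x^0$), this is exactly $\frac{1}{L^2}\|A(\hat x^k - x^0) - b\|^2$. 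So I need to show that the affine search space $x^0 + \spa\{x^1 - x^0, \dots, x^k - x^0\}$ used by $\AAr$ coincides with the Krylov space $x^0 + \cK^k(A,b)$ used by $\GMRES$. This is where the induction hypothesis enters: assuming $x^j_G = \hat x^j$ for $j < k$ and that the $\AAr$ update $x^{j+1} = \bar g(\hat x^j) = \hat x^j - \frac1L(A(\hat x^j - x^0) + \nabla f(x^0))$, one shows inductively that $\spa\{x^1 - x^0,\dots,x^k - x^0\} = \cK^k(A,b)$; the new direction $x^{j+1} - x^0$ lies in $\cK^{j+1}(A,b)$ because applying $\bar g$ to a point in $x^0 + \cK^j(A,b)$ adds one power of $A$ times $b$, and a dimension/nondegeneracy count (guaranteed by \ref{A5}, which makes $X_k$ and hence $H_k$ full column rank) closes the equality. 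Once the two search spaces and the two objectives are matched, uniqueness of the minimizer gives $x^k_G = \hat x^k$, and then $\bar x^k_G = \bar g(x^k_G) = \bar g(\hat x^k)$, which by the $\AAr$ recursion and exactness of $\bar g$ on the iterates equals $x^{k+1}$, completing the induction.

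For the final claim about $\kappa((\bar X^k_G)^\top \bar X^k_G) \le M^2$: once $x^k_G = \hat x^k$ and $\bar x^k_G = x^{k+1}$ are established, I observe that $\bar X^k_G = [\bar x^0_G - x^0,\dots,\bar x^{k-1}_G - x^0] = [x^1 - x^0,\dots,x^k - x^0] = X_k$ (with $\hat m = m$, $k \le m$ in this cycle), so the bound is literally \ref{A5} restated. I would just note this identification explicitly.

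The main obstacle I anticipate is the bookkeeping around the identification of the search spaces — specifically, confirming that the matrix $A = A_m$ built from the \emph{final} differences $X_m$ still satisfies $A(x^i - x^0) = \nabla f(x^i) - \nabla f(x^0)$ for all $i \le m$ (not just $i \le k$), so that $\bar g$ is simultaneously exact at every iterate $x^0,\dots,x^m$ and the Krylov recursion is generated by one fixed operator $A$ rather than a changing $A_k$. This is what makes the telescoping Krylov argument work and is the conceptual heart of why $\AAr$ over a full cycle equals $\GMRES$ on a single fixed perturbed linear system; the rest is the standard equivalence-of-minimization-problems argument, which one must be careful to phrase in terms of the perturbed (generally non-symmetric) $A$ rather than the Hessian.
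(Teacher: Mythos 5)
Your proposal is correct and follows essentially the same route as the paper's proof: induction combined with the variational characterization of the $\GMRES$ iterates from \cref{gmres_property}, the identity $AX_k=-LH_k$ from \eqref{eq:why-not-use-it} to rewrite the $\AAn$ subproblem as a residual minimization for $A$, linear independence from \ref{A5} plus a dimension count to identify $\spa\{x^1-x^0,\dots,x^k-x^0\}$ with $\cK^k(A,-\nabla f(x^0))$, and the exactness of $\bar g$ at the iterates to close the induction. The point you flag as the main obstacle --- that the single fixed matrix $A=A_m$ satisfies $A(x^i-x^0)=\nabla f(x^i)-\nabla f(x^0)$ for all $i\le m$ --- is exactly what \eqref{eq:why-not-use-it} (with $k=m$) provides, and your identification $\bar X^k_G=X_k$ for the final claim matches the paper's (terser) argument.
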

\begin{proof}
    We prove~\cref{prop4-4} by induction. The base case $k=0$ is obviously satisfied. Next, let us suppose that the induction hypothesis is true for any $i\leq k-1$. By definition, we have $\hat x^{k}=x^{0}+\sum_{i=1}^{k}\alpha^k_i(x^{i}-x^{0})$ where $\alpha^k$ is the solution to
    \[             \min_{\alpha}~\bigl\|h(x^{0})+{\sum}_{i=1}^{k}\alpha_i(h(x^{i})-h(x^{0}))\bigr\|^2.            \]
    Based on the definition of the matrix $A$ (see \eqref{eq:lets-define-A}) and by \eqref{eq:why-not-use-it}, we further obtain:
    \begin{equation} \label{eq:a-nice-eq}    A(x^i-x^0)=\nabla f(x^i)-\nabla f(x^0)=-L(h(x^i)-h(x^0)) \quad  \forall~ i=0,\dots,m.                    \end{equation}
    Hence, $\alpha^k$ is also the solution to the problem $ \min_{\alpha}\,\|A[{\sum}_{i=1}^{k}\alpha_i(x^{i}-x^{0})]+\nabla f(x^0)\|^2$ and it holds that $\hat x^k=\argmin_{x\in x^0+\spa\{x^1-x^0,\dots,x^k-x^0\} }\|A(x-x^0)+\nabla f(x^0)\|^2$. In addition, using assumption \ref{A5}, we can infer that the vectors $\{x^1-x^0,\dots,x^k-x^0\}$ are linearly independent. Applying \cref{gmres_property}, we have $x_G^k - x^0 \in \cK^{k}(A,-\nabla f(x^0))$ and thus, it follows $\bar x_G^k - x^0 =  x_G^k - x^0 - \frac{1}{L}(A(x_G^k-x^0)+\nabla f(x^0)) \in \cK^{k+1}(A,-\nabla f(x^0))$ (for all $k$). Combining these observations and using the induction hypothesis, this yields $\spa\{x^1-x^0,\dots,x^k-x^0\}=\spa\{\bar x_G^1-x^0,\dots,\bar x_G^{k-1}-x^0\}=\cK^{k}(A,-\nabla f(x^0))$, where the last equality follows from $\mathrm{dim}(\spa\{x^1-x^0,\dots,x^k-x^0\}) = k$. Thus, by \cref{gmres_property}, we can deduce:
    \begin{align*} x^{k}_G & ={\argmin}_{x\in x^0+\cK^{k}(A,-\nabla f(x^0))}\,\|A(x-x^0)+\nabla f(x^0)\|^2 \\ & ={\argmin}_{x\in x^0+\spa\{x^1-x^0,...,x^k-x^0\}}\,\|A(x-x^0)+\nabla f(x^0)\|^2=\hat x^k \end{align*}
     and thanks to \eqref{eq:a-nice-eq}, we obtain
     \[ \bar x^k_G=\bar g(x^k_G)=\bar g(\hat x^k)= \hat x^k - {L}^{-1} \cdot {\sum}_{i=1}^k \alpha_i^k A(x^i-x^0) + h(x^0) = \hat g^k = x^{k+1}. \] 
    The last assertion in~\cref{prop4-4} now follows from $\bar x^k_G=x^{k+1}$ and \ref{A5}. 
\end{proof}

\subsection{Connecting $\GMRES$ and $\CR$}
\label{gmres_cr}
In this section, we study  $\GMRES$ and $\CR$ applied to the general linear systems
\[ A(x - x^0) = b \quad \text{and} \quad B(x - x^0) = b, \quad A, B \in \R^{n \times n}, \quad b, x^0 \in \R^n. \]
Let $x^k_{G}$ denote the $k$-th iteration of $\GMRES$ applied to $A(x-x^0)=b$ and let $x^k_{R}$ denote the $k$-th iteration of $\CR$ applied to the linear system $B(x-x^0)=b$ (in our case, we will have $B=\nabla^2f(x^0)$ and $b=-\nabla f(x^0)$). Here, we want to investigate and bound the distance between the iterates $x^k_G$ and $x^k_R$. In our analysis, we further assume $x^0_G=x^0_R=x^0$ and $b\neq 0$. We will largely utilize the following simple fact:
\begin{prop}
\label{prop4-6}
Let $a_1,b_1,a_2,b_2$ be given scalars, vectors, or matrices with appropriate dimensions such that $a_1b_1$, $a_2b_2$, $a_1-a_2$, $b_1-b_2$, and $a_1b_1-a_2b_2$ are well-defined. Then, we have $\|a_1b_1-a_2b_2\|\leq \|a_1-a_2\|\|b_1\|+\|b_1-b_2\|\|a_2\|$.                     
\end{prop}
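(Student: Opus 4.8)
The statement is the elementary ``product rule'' bound for the difference of two products, and the plan is the standard add-and-subtract trick. The key observation is that we can interpolate between $a_1 b_1$ and $a_2 b_2$ through the intermediate term $a_2 b_1$ (one could equally use $a_1 b_2$, which would produce the symmetric bound $\|a_1-a_2\|\|b_2\| + \|b_1-b_2\|\|a_1\|$; the stated form corresponds to the choice $a_2 b_1$). Concretely, I would write
\[
a_1 b_1 - a_2 b_2 = (a_1 - a_2) b_1 + a_2 (b_1 - b_2),
\]
which is a valid identity precisely because all the indicated products and differences are well-defined by hypothesis (so the dimensions are compatible and the bilinear expansion is legitimate, whether the objects are scalars, vectors, or matrices).

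From here the proof is immediate: apply the triangle inequality for $\|\cdot\|$ to the right-hand side, giving
\[
\|a_1 b_1 - a_2 b_2\| \leq \|(a_1 - a_2) b_1\| + \|a_2 (b_1 - b_2)\|,
\]
and then invoke submultiplicativity of the norm (valid since $\|\cdot\|$ is the Euclidean vector norm / spectral matrix norm, which is submultiplicative and consistent across the matrix--vector products in question) to bound $\|(a_1-a_2)b_1\| \leq \|a_1-a_2\|\|b_1\|$ and $\|a_2(b_1-b_2)\| \leq \|a_2\|\|b_1-b_2\|$. Combining these two estimates yields exactly the claimed inequality.

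There is essentially no obstacle here: the only point requiring a word of care is that the three norm inequalities used (triangle inequality, and submultiplicativity in each of the two terms) all hold simultaneously for the norms in play, which is guaranteed by the convention fixed in the Notation section that $\|\cdot\|$ denotes the Euclidean/spectral norm. This proposition is purely a convenient lemma packaging a one-line manipulation that will be applied repeatedly in \cref{gmres_cr} when propagating perturbation errors through products such as $(H_k^\top H_k)^{-1} H_k^\top$, so the proof should be stated crisply and without further ado.
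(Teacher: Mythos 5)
Your proof is correct and is exactly the standard argument the paper has in mind: the paper states \cref{prop4-6} as a ``simple fact'' without proof, and the intended justification is precisely your decomposition $a_1b_1-a_2b_2=(a_1-a_2)b_1+a_2(b_1-b_2)$ followed by the triangle inequality and submultiplicativity. Nothing is missing.
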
 

As usual, the concrete implementation of $\CR$ is not of our concern and we only require the following property of $\CR$:
\begin{prop}
    \label{cr_property}
    \cite[Section 2.2]{fong2012cg} Suppose $B \in \R^{n \times n}$ is symmetric and positive definite and let $b, x^0 \in \R^n$ be given. Let $\{x^k_R\}_k$ be generated by $\CR$ applied to the linear system $B(x-x^0)=b$ with $x^0_R=x^0$. Then, we have:
    \[     x^k_R={\argmin}_{x\in x^0+\cK^k(B,b)}~\|B(x-x^0)-b\|^2.        \] 
\end{prop}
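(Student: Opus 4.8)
The plan is to recognize Proposition~\ref{cr_property} as the standard minimal-residual characterization of the conjugate residual method and to reduce it to the classical conjugate-directions (expanding subspace) theorem, cf.~\cite{saad2003iterative,fong2012cg}. First I would remove the shift by setting $y := x - x^0$, so that $\CR$ applied to $B(x-x^0)=b$ with $x^0_R = x^0$ is the same as $\CR$ applied to $By = b$ with $y^0 = 0$, and the target becomes: the $k$-th $\CR$ iterate $y^k$ minimizes $\phi(y) := \tfrac12\|By-b\|^2$ over $y \in \cK^k(B,b)$. Note that $\phi$ is a convex quadratic with Hessian $B^\top B = B^2$, which is positive definite because $B$ is symmetric and positive definite; its gradient is $\nabla\phi(y) = -B\,r(y)$ with $r(y) := b - By$, so the minimizer of $\phi$ over a subspace $\cK$ is characterized by the Galerkin condition $r(y) \perp B\cK$.

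Next I would invoke the defining recurrences of $\CR$: $r^0 = b$, $p^0 = r^0$, and for $j \ge 0$, $y^{j+1} = y^j + \alpha_j p^j$ with $\alpha_j = \tfrac{(r^j)^\top B r^j}{(Bp^j)^\top Bp^j}$, $r^{j+1} = r^j - \alpha_j Bp^j$, and $p^{j+1} = r^{j+1} + \beta_j p^j$ with $\beta_j = \tfrac{(r^{j+1})^\top B r^{j+1}}{(r^j)^\top B r^j}$. A joint induction on $j$ then shows that (a) $\spa\{p^0,\dots,p^{j}\} = \cK^{j+1}(B,b)$; (b) the search directions are $B^2$-conjugate, i.e.\ $(Bp^i)^\top Bp^j = 0$ for $i \neq j$; and (c) $\alpha_j$ is exactly the stepsize minimizing $\phi(y^j + \alpha p^j)$ over $\alpha\in\R$. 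Positive definiteness of $B$ is precisely what guarantees that $\CR$ does not break down before reaching the solution: $r^j \neq 0$ forces $(r^j)^\top B r^j > 0$ and $p^j \neq 0$ forces $(Bp^j)^\top Bp^j > 0$, so all quantities are well-defined and the Krylov dimensions do not collapse prematurely.

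Given (a)--(c), the conjugate-directions theorem applies to $\phi$: performing exact line searches along the mutually $B^2$-conjugate directions $p^0,\dots,p^{k-1}$ starting from $y^0 = 0$ yields the global minimizer of $\phi$ over $\spa\{p^0,\dots,p^{k-1}\} = \cK^k(B,b)$. Hence $y^k = \argmin_{y\in\cK^k(B,b)}\|By - b\|^2$, and translating back via $y = x - x^0$ gives $x^k_R = \argmin_{x\in x^0+\cK^k(B,b)}\|B(x-x^0)-b\|^2$, as claimed. Equivalently, one can bypass the expanding subspace theorem and verify directly from the recurrences that $y^k\in\cK^k(B,b)$ and that the residual $r^k = b - By^k$ satisfies the normal equations $r^k\perp B\cK^k(B,b)$; strict convexity of $\phi$ then yields uniqueness of the minimizer and the identification $y^k = x^k_R - x^0$.

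The only genuine work is the bookkeeping in the joint induction of the second step, which is entirely standard — this is why the statement is simply quoted from~\cite{fong2012cg} rather than reproved. The main conceptual point to keep straight is that $\CR$ must be read as a conjugate-direction method for $\phi(y)=\tfrac12\|By-b\|^2$ (Hessian $B^2$), rather than for the energy functional $\tfrac12 y^\top B y - b^\top y$ that underlies $\CG$; this distinction is exactly what is exploited later when $\CR$ and $\CG$ are connected in~\cref{cr_cg}.
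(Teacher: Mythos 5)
Your argument is correct. The paper itself offers no proof of this proposition -- it is quoted verbatim from \cite[Section 2.2]{fong2012cg} -- so there is nothing to compare against except the standard literature argument, which is exactly what you reproduce: shift to $y=x-x^0$, view $\CR$ as a conjugate-direction method for $\phi(y)=\tfrac12\|By-b\|^2$ with Hessian $B^2$, establish by joint induction that the directions are $B^2$-conjugate, span the Krylov spaces, and are traversed by exact line searches, and then invoke the expanding-subspace theorem (or, equivalently, check the Galerkin condition $r^k\perp B\cK^k(B,b)$ directly). The one point worth flagging is that step (c) of your induction -- that $\alpha_j$ as defined by the $\CR$ recurrence coincides with the exact line-search stepsize $\tfrac{(r^j)^\top Bp^j}{(Bp^j)^\top Bp^j}$ -- silently uses the orthogonality $(r^j)^\top Bp^{j-1}=0$, which must itself be carried as an induction hypothesis alongside (a) and (b); you acknowledge this as ``bookkeeping,'' and it is indeed standard, but it is the only place where the induction could be accused of circularity if written carelessly. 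Your closing remark correctly identifies the conceptual distinction (minimizing $\|By-b\|$ versus the energy norm) that the paper exploits in \cref{cr_cg}.
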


Based on our earlier discussion, we now introduce several additional terms:
\begin{equation}
    \label{defn_bar_x}
    \begin{aligned}
        \bar x^k_G&=x^k_G-\frac1L(A(x^k_G-x^0)-b) ,\quad  &\bar x^k_R&= x^k_R-\frac1L(B(x^k_R-x^0)-b), \\
        \tilde x^k_G&=\bar x^k_G-\frac1L(A(\bar x^k_G-x^0)-b), \quad&\tilde x^k_R&=\bar x^k_R-\frac1L(B(\bar x^k_R-x^0)-b),\\
        \bar X^k_G&=[\bar x^0_G-x^0,\dots,\bar x^{k}_G-x^0], \quad  &\bar X^k_R&=[\bar x^0_R-x^0,\dots,\bar x^{k}_R-x^0].
    \end{aligned}
\end{equation}
Our first lemma in this section allows to connect the residuals of $\bar x^k_G$ and $x^k_G$. 
\begin{lem}
    \label{lemma4-7}
Let $B \in \R^{n \times n}$ be a symmetric, positive definite matrix with $L \geq \lambda_{\max}(B)$, $\lambda_{\min}(B)\geq \mu>0$ and suppose that $A \in \R^{n \times n}$ satisfies $\|A-B\|<\mu$. Let $x,b \in \R^n$ be given and set $\bar x=x-L^{-1}(Ax-b)$. It holds that:
    \[   \|A\bar x-b\|\leq \|Ax-b\|.            \]
\end{lem}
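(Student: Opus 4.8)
The plan is to reduce the inequality to a one-line operator-norm estimate for $I - L^{-1}A$. First I would record the residual identity obtained by direct substitution: since $\bar x = x - L^{-1}(Ax-b)$, we have
\[ A\bar x - b = (Ax-b) - \frac{1}{L}A(Ax-b) = \left(I - \frac{1}{L}A\right)(Ax-b), \]
so that $\|A\bar x - b\| \le \|I - L^{-1}A\|\,\|Ax-b\|$. It therefore suffices to show $\|I - L^{-1}A\| \le 1$, or equivalently $\|LI - A\| \le L$.

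Next I would estimate $\|LI - A\|$ by viewing $A$ as a small perturbation of the symmetric matrix $B$. Writing $LI - A = (LI - B) - (A - B)$ and applying the triangle inequality gives $\|LI - A\| \le \|LI - B\| + \|A - B\|$. Since $B$ is symmetric with $\mu \le \lambda_{\min}(B)$ and $\lambda_{\max}(B) \le L$, the matrix $LI - B$ is symmetric positive semidefinite with largest eigenvalue $L - \lambda_{\min}(B) \le L - \mu$, hence $\|LI - B\| = \lambda_{\max}(LI - B) \le L - \mu$. Combining this with the hypothesis $\|A - B\| < \mu$ yields $\|LI - A\| < (L-\mu) + \mu = L$, i.e.\ $\|I - L^{-1}A\| < 1$. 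Substituting back into the residual identity proves the claim (in fact with strict inequality whenever $Ax \ne b$).

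There is no real obstacle here; the only points requiring care are the algebraic residual identity $A\bar x - b = (I - L^{-1}A)(Ax-b)$, and the observation that although $A$ itself need not be symmetric — so its spectral norm is \emph{not} controlled by its spectrum — the perturbation argument still closes, because $\|LI - B\|$ is a genuine eigenvalue bound for the symmetric matrix $B$ while $\|A-B\|$ is assumed small. This lemma will subsequently be used to propagate a monotonicity property of the residuals through the auxiliary sequences $\bar x^k_G$ introduced in \eqref{defn_bar_x} when comparing $\GMRES$ and $\CR$.
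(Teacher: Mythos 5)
Your proof is correct and follows essentially the same route as the paper's: factor the new residual as $(I-L^{-1}A)(Ax-b)$ and bound $\|I-L^{-1}A\|\le\|I-L^{-1}B\|+L^{-1}\|A-B\|\le(1-\mu/L)+\mu/L=1$ using the symmetry of $B$. The only (harmless) difference is that you obtain the residual identity by direct substitution rather than via $x^*=A^{-1}b$, which lets you skip the preliminary nonsingularity observation the paper makes.
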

\begin{proof}
    First notice that $\sigma_{\min}(A)\geq \sigma_{\min}(B)-\|A-B\|>0$, which shows that $A$ is nonsingular. We can then define $x^*:=A^{-1}b$ and rewrite $Ax-b=A(x-x^*)$. Furthermore, it holds that
    \[ \|A\bar x-b\|=\|A(\bar x-x^*)\| =\|(I-L^{-1}A)A(x-x^*)\|\leq\|I-L^{-1}A\| \|A(x-x^*)\|.                    \]
    Hence, it suffices to verify $\|I-L^{-1}A\|\leq 1$. Indeed, we have $\|I-L^{-1}A\|\leq \|I-L^{-1}B\|+\|L^{-1}(A-B)\|\leq 1-\frac{\mu}{L}+\frac{\mu}{L}\leq 1$ which finishes the proof.         
\end{proof} 

Next, we present our main result of this subsection.
\begin{thm}
    \label{thm4-6}
    Let $B \in \R^{n \times n}$ be a symmetric, positive definite matrix with $\lambda_{\max}(B)\leq L$ and $\lambda_{\min}(B)\geq \mu>0$ and let $A \in \R^{n \times n}$, $b, x^0 \in \R^n$, and $\mathbb{N} \ni m \leq n$ be given. Let the sequences $\{x^k_G\}_k$ and $\{x^k_R\}_k$ be generated by $\GMRES$ and $\CR$ applied to the linear systems $A(x-x^0)=b$ and $B(x-x^0)=b$ with $x^0_G=x^0_R=x^0$, respectively. Suppose further that there are constants $C_1,C_2,C_3>0$ such that:
\begin{enumerate}[label=\textup{\textrm{(\roman*)}},topsep=0pt,itemsep=0ex,partopsep=0ex]
    \item $\|A-B\|\leq C_1 \|b\|$.
    \item For each $k=0,\dots,m$, we have $\|\bar x^k_G-x^0_G\|\leq C_2\|b\|$.
    \item We have $\kappa((\bar X^k_G)^\top\bar X^k_G)\leq C_3$ for all $k=1,\dots,m$.
\end{enumerate}    
 There exists a constant {$\epsilon_\sharp>0$} such that if $\|b\|\leq \epsilon_\sharp$, then there is $C>0$ such that:
\[      \|x^k_G- x^k_R\|\leq C\|b\|^2, \quad \|\bar x^k_G-\bar x^k_R\|\leq C\|b\|^2, \quad \|\tilde x^k_G-\tilde x^k_R\|\leq C\|b\|^2, \quad \forall~0\leq k\leq m.      \]
\end{thm}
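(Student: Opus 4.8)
\textbf{Proof plan for \cref{thm4-6}.}
The plan is to bound the three quantities $\|x^k_G - x^k_R\|$, $\|\bar x^k_G - \bar x^k_R\|$, and $\|\tilde x^k_G - \tilde x^k_R\|$ simultaneously by induction on $k$, exploiting that both $\GMRES$ and $\CR$ are characterized by the same variational principle (\cref{gmres_property} and \cref{cr_property}) over Krylov subspaces built from nearby matrices. The governing intuition is that $A = B + (A-B)$ is a $\mathcal O(\|b\|)$-perturbation of the symmetric positive definite matrix $B$, so the Krylov spaces $\cK^k(A,b)$ and $\cK^k(B,b)$ — and hence the residual-minimizing iterates over them — should differ by $\mathcal O(\|b\|^2)$ once the $\mathcal O(\|b\|)$-scale of the data itself is taken into account (note $x^k_G - x^0$ and $x^k_R - x^0$ are themselves $\mathcal O(\|b\|)$, so an $\mathcal O(\|b\|)$ relative error between them is an $\mathcal O(\|b\|^2)$ absolute error).

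First I would record the basic a priori bounds: since $\lambda_{\min}(B) \ge \mu$ and $\|A-B\| \le C_1\|b\|$, for $\|b\|$ small enough $A$ is nonsingular with $\sigma_{\min}(A) \ge \mu/2$, both $A$ and $B$ have norm $\le L + \mathcal O(\|b\|)$, and $\|x^k_R - x^0\| \le \|B^{-1}b\| \le \|b\|/\mu$, while hypothesis (ii) gives the analogous bound for $\bar x^k_G$; a short argument (or \cref{lemma4-7}-type estimate) gives $\|x^k_G - x^0\| = \mathcal O(\|b\|)$ as well. Next I would set up the induction. The key algebraic device is to compare the two \emph{residuals}: writing $r^k_G = A(x^k_G - x^0) - b$ and $r^k_R = B(x^k_R - x^0) - b$, one has by \cref{prop4-6} that $\|r^k_G - r^k_R\| \le \|A-B\|\,\|x^k_R - x^0\| + \|A\|\,\|(x^k_G-x^0)-(x^k_R-x^0)\| \le C_1\|b\|\cdot\mathcal O(\|b\|) + \mathcal O(1)\cdot\|x^k_G - x^k_R\|$, i.e. an $\mathcal O(\|b\|^2)$ term plus a term controlled by the inductive hypothesis. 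The nontrivial direction is to go \emph{back} from residuals to iterates: I would use the variational characterizations to show that if $x^k_R$ is the $\CR$ iterate, then its $\GMRES$-residual $\|A(x^k_R - x^0) - b\|$ exceeds the optimal $\|r^k_G\|$ by only $\mathcal O(\|b\|^2)$, and conversely; combined with the strong convexity of the residual functional $x \mapsto \|A(x-x^0)-b\|^2$ on the relevant $k$-dimensional subspace — whose smallest eigenvalue is $\ge \sigma_{\min}(A)^2 \cdot \lambda_{\min}((\bar X^k)^\top \bar X^k / \|\cdot\|)$, where hypothesis (iii) keeps the Gram matrix $(\bar X^k_G)^\top \bar X^k_G$ uniformly well-conditioned and bounded below — one converts the $\mathcal O(\|b\|^2)$ objective-value gap into an $\mathcal O(\|b\|)$ gap in the argument, hence $\mathcal O(\|b\|^2)$ after accounting for scaling. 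Here I would need to control that $\cK^k(A,b)$ and $\cK^k(B,b)$ are close as subspaces: expanding $A^j b = B^j b + \mathcal O(\|b\|\cdot\|A^{j-1}b\| + \dots)$ inductively gives each basis vector of one Krylov space within $\mathcal O(\|b\|)$ of the span of the other, so a point in $x^0 + \cK^k(B,b)$ can be matched by a point in $x^0 + \cK^k(A,b)$ up to $\mathcal O(\|b\|)\cdot\mathcal O(\|b\|) = \mathcal O(\|b\|^2)$ (and vice versa), which is what feeds the objective-gap comparison.

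Once $\|x^k_G - x^k_R\| \le C\|b\|^2$ is established, the bounds on $\bar x^k_G - \bar x^k_R$ and $\tilde x^k_G - \tilde x^k_R$ follow by direct substitution of the definitions in \eqref{defn_bar_x}: $\bar x^k_G - \bar x^k_R = (x^k_G - x^k_R) - \tfrac1L[A(x^k_G - x^0) - B(x^k_R - x^0)]$, and the bracketed term is again estimated via \cref{prop4-6} as $\|A-B\|\,\|x^k_R-x^0\| + \|A\|\,\|x^k_G - x^k_R\| = \mathcal O(\|b\|^2)$; the estimate for $\tilde x$ is the same computation applied once more, using that $\bar x^k_G - x^0$ and $\bar x^k_R - x^0$ are $\mathcal O(\|b\|)$ (hypothesis (ii) and the $\CR$ bound). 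I expect the main obstacle to be the induction step establishing $\|x^k_G - x^k_R\| = \mathcal O(\|b\|^2)$ rigorously: one must simultaneously propagate the closeness of the Krylov subspaces, use the near-optimality-transfer argument in both directions, and invoke the uniform lower bound on the Hessian of the residual functional (via hypothesis (iii)) to pass from a small objective gap to a small distance in the iterate — all while ensuring the constants do not blow up as $k$ runs up to $m$. The case distinction for whether $\GMRES$/$\CR$ has already converged (so the $k$-th Krylov space is degenerate) may require a small separate remark, but under hypothesis (iii) the iterates stay genuinely $k$-dimensional, which I would use to sidestep this.
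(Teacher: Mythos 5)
Your outer scaffolding matches the paper's proof: an induction on $k$, the a priori bounds $\|x^k_G-x^0\|,\|x^k_R-x^0\|=\mathcal O(\|b\|)$, and the final step deducing the $\bar x$- and $\tilde x$-bounds from the $x$-bound by substituting the definitions in \eqref{defn_bar_x} are all exactly right. However, the core mechanism you propose for the induction step --- transfer of near-optimality between the two least-squares problems followed by strong convexity of the residual functional --- has two genuine problems. The decisive one is quantitative: the quantity you can actually control is the gap in the \emph{residual norms}, which is $\mathcal O(\|b\|^2)$; since the residuals themselves are of size $\mathcal O(\|b\|)$, the gap in the \emph{squared} objective $x\mapsto\|A(x-x^0)-b\|^2$ is then only $\mathcal O(\|b\|)\cdot\mathcal O(\|b\|^2)=\mathcal O(\|b\|^3)$, and strong convexity (with modulus $\sigma_{\min}(A)^2=\mathcal O(1)$) converts this into an argument gap of order $\sqrt{\mathcal O(\|b\|^3)}=\mathcal O(\|b\|^{3/2})$, not $\mathcal O(\|b\|^2)$. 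This square-root loss is intrinsic to any objective-gap argument for quadratics (the expansion $\phi(y)=\phi(x^\ast)+\frac12(y-x^\ast)^\top\nabla^2\phi\,(y-x^\ast)$ is exact), so the step cannot be repaired by sharpening constants; it would only yield a weaker version of the theorem. The second problem is the subspace-matching step: closeness of the power-basis vectors $A^jb$ and $B^jb$ (which indeed differ by $\mathcal O(\|b\|^2)$) does not imply closeness of $\cK^k(A,b)$ and $\cK^k(B,b)$ as subspaces, because the coefficients needed to represent a point of one Krylov space in the power basis are controlled by the inverse of that basis's Gram matrix, which can be arbitrarily ill-conditioned; hypothesis (iii) bounds the conditioning of the basis $\bar X^k_G$, not of the power basis, so your matching constant is not under control.

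The paper's proof avoids both issues by working with closed-form expressions rather than optimality gaps. It shows inductively that $\{\bar x^0_G-x^0,\dots,\bar x^k_G-x^0\}$ spans $\cK^{k+1}(A,b)$ and --- using the induction hypothesis together with (iii) and the lower bound $\sigma_{\min}(\bar X^k_G)\gtrsim\|b\|$ --- that the perturbed basis $\{\bar x^0_R-x^0,\dots,\bar x^k_R-x^0\}$ remains linearly independent and spans $\cK^{k+1}(B,b)$. This reduces both iterates to explicit least-squares solutions $x^{k+1}_G=x^0+Y^k_G((Y^k_G)^\top Y^k_G)^{-1}(Y^k_G)^\top b$ with $Y^k_G=A\bar X^k_G$ (and analogously for $\CR$), after which a first-order perturbation analysis --- bounding $\|Y^k_G-Y^k_R\|=\mathcal O(\|b\|^2)$, $\|((Y^k_G)^\top Y^k_G)^{-1}\|=\mathcal O(\|b\|^{-2})$, and invoking Banach's perturbation lemma for the inverse Gram matrices --- delivers the full $\mathcal O(\|b\|^2)$ bound on $\|x^{k+1}_G-x^{k+1}_R\|$. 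If you want to salvage your approach, the essential missing ideas are (a) to use the $\bar x^i_G$ and $\bar x^i_R$ as the matched, well-conditioned bases of the two Krylov spaces, and (b) to replace the objective-gap argument by a direct perturbation analysis of the least-squares solution map, which is linear in the data perturbation at first order and therefore does not suffer the square-root loss.
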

\begin{proof}
    Without loss of generality, we can assume $b\neq 0$. We define the following quantities recursively: $\zeta_0=0$, $c_{k,1}=L^{-1}C_1C_2+L^{-2}C_1+\zeta_k$, $c_{k,2}=(k+1)C_1C_2+ L({\sum}_{i=0}^kc_{i,1}^2)^{\frac12}$, $c_{k,3}=3c_{k,2}L(k+1)C_2$, $c_{4}=\tfrac{16}{9\mu^2}C_3L^2$, $c_{k,5}=c_4c_{k,2}+\tfrac{7}{2}c_4^2c_{k,3}(k+1)C_2$, $\zeta_{k+1}=\frac{7}{4}L(k+1)C_2c_{k,5}+\sqrt{c_4}c_{k,2}$, $c_j=\max_{k=0,\dots,m}c_{k,j}$, for all $j = 1,2,3,5$ and $\epsilon_\sharp := \frac12 \min\{(L^2{C_3\sum_{i=0}^{m}c_{i,1}^2})^{-\frac12},\frac{\mu}{2C_1},\frac{L}{c_2},\frac{1}{c_3c_4}\}$. Next, let us assume $\|b\|\leq \epsilon_\sharp$. Due to $C_1\epsilon_\sharp\leq \frac{1}{4}\mu \leq \frac14 L$, we then immediately obtain:
    \begin{align}
       \label{cond_num_A}
       \|A\|\leq \|B\|+\|A-B\|\leq \tfrac{5}{4}L \quad \text{and} \quad \sigma_{\min}(A)\geq \lambda_{\min}(B)-\|A-B\|\geq  \tfrac{3}{4}\mu. 
    \end{align}
This shows that $A$ is nonsingular. Our goal is now to establish $\|x^k_G-x^k_R\|\leq \zeta_{k}\|b\|^2$ by induction. The base case $k=0$ is trivial. Let us suppose that the induction hypothesis is true for all $0 \leq i \leq k$. By~\cref{gmres_property} and \cref{cr_property}, we have:
    \begin{align*}
        x^{k+1}_G=\argmin_{x\in x^0+\cK^{k+1}(A,b)} \| A(x-x^0)-b\|^2, \quad
        x^{k+1}_R=\argmin_{x\in x^0+\cK^{k+1}(B,b)} \| B(x-x^0)-b\|^2.   
    \end{align*}
    Furthermore, mimicking the proof of~\cref{prop4-4} and using (iii), we can deduce
    \begin{equation} \label{eq:nice-to-use-later}  \bar x^k_G-x^0\in \cK^{k+1}(A,b) \quad \text{and} \quad \bar x^k_R-x^0\in \cK^{k+1}(B,b) \end{equation}  
    for all $k$ and $\spa\{ \bar x^0_G-x^0,\dots,\bar x^k_G-x^0  \} = \cK^{k+1}(A,b)$. In addition, applying (i) and (ii), it holds that
    \begin{align}
        \|\bar x^i_G-\bar x^i_R\|&=\|x^i_G-L^{-1}A(x^i_G-x^0)-x^i_R+L^{-1}B(x^i_R-x^0)\|\notag\\&=\|L^{-1}(A-B)(x^0-x^i_G)+(I-L^{-1}B)(x^i_G-x^i_R)\| \notag\\
        &\leq L^{-1}\|A-B\|(\|\bar x^i_G-x^0\|+\|\bar x^i_G-x^i_G \|)+\|I-L^{-1}B\|\|x^i_G-x^i_R\| \notag\\
        &\leq  C_1L^{-1}\|b\|(\|\bar x^i_G-x^0\|+\|\bar x^i_G-x^i_G\|) +  \|x^i_G-x^i_R\|             \notag\\ 
        &\leq  C_1L^{-1}\|b\|(C_2\|b\|+L^{-1}\|b\|) +  \zeta_i\|b\|^2 = c_{i,1} \|b\|^2,  \label{eq4-1}
    \end{align}
    where we used $\|I-L^{-1}B\| \leq 1-\frac{\mu}{L} \leq 1$ and~\cref{gmres_property} to show that:
    \begin{align*} \|\bar x^i_G-x^i_G\| &=L^{-1}{\min}_{x\in x^0+\cK^i(A,b)}\|A(x-x^0)-b\|\leq L^{-1}\|A(x^0-x^0)-b\|=L^{-1}\|b\|.\end{align*}
    Therefore, we are able to bound the error between $\bar X_G^k$ and $\bar X_R^k$:
    \[  \|\bar X_G^k-\bar X_R^k\|\leq \left({\sum}_{i=0}^{k}\|\bar x^i_G-\bar x^i_R\|^2\right)^{1/2}\leq \left({\sum}_{i=0}^kc_{i,1}^2\right)^{1/2}\|b\|^2\leq \frac{1}{2L\sqrt{C_3}}\|b\|, \]
    where we applied the definition of $\epsilon_\sharp$. Furthermore, due to (iii), we can infer:
    \begin{align}
        \label{lower_bound_Xg}
        \|\bar X_G^k\|\geq \|\bar x^0_G-x^0\|=\tfrac{\|b\|}{L} \quad\implies\quad \sigma_{\min} (\bar X_G^k)\geq \tfrac{\sigma_{\max}(\bar X_G^k)}{\sqrt{C_3}}\geq \tfrac{\|b\|}{(L\sqrt{C_3})}.   
    \end{align}
    Consequently, it holds that $\sigma_{\min}(\bar X_R^k)\geq \sigma_{\min}(\bar X_G^k)-\|\bar X_G^k-\bar X_R^k\| \geq \frac{1}{2L\sqrt{C_3}}\|b\|>0$. Thus, the column vectors of $\bar X_R^k$ are also linearly independent and by \eqref{eq:nice-to-use-later}, it follows $\spa\{ \bar x^0_R-x^0,\dots,\bar x^k_R-x^0  \} = \cK^{k+1}(B,b)$. Combining the previous arguments, we can now rewrite $x^{k+1}_G$ and $x^{k+1}_R$ as:
    \begin{align*}
        x^{k+1}_G&={\argmin}_{x\in x^0+\spa\{ \bar x^0_G-x^0,...,\bar x^k_G-x^0  \} }~\| A(x-x^0)-b\|^2,    \\
        x^{k+1}_R&={\argmin}_{x\in x^0+ \spa\{ \bar x^0_R-x^0,...,\bar x^k_R-x^0  \} }~\| B(x-x^0)-b\|^2.   
    \end{align*}
    The closed-form expressions of $x^{k+1}_G$ and $x^{k+1}_R$ are therefore given by:
    \[       x^{k+1}_G=x^0+Y^k_G((Y_G^k)^\top Y^k_G)^{-1} (Y^k_G)^\top b, \quad         x^{k+1}_R=x^0+Y^k_R((Y_R^k)^\top Y^k_R)^{-1}(Y^k_R)^\top b \]
    where $Y^k_G=A\bar X^k_G$ and $Y^k_R=B\bar X^k_R$. Our first task is to estimate the error between $Y_G^k$ and $Y_R^k$. Using (i) and (ii), we have:
    \begin{align*}
        \|Y_G^k-Y_R^k\| & \leq \|(A-B)\bar X^k_G\|+\|B(\bar X^k_G-\bar X^k_R)\| \\ & \leq (k+1)C_1\|b\|\|\bar X_G^k\|_\infty+L\left({\sum}_{i=0}^kc_{i,1}^2\right)^{1/2}\|b\|^2 \leq c_{k,2}\|b\|^2\leq c_2\|b\|^2.
    \end{align*}
    Moreover, applying \eqref{cond_num_A}, we can infer $\|Y^k_G\|\leq \|A\|\|\bar X_G^k\|\leq \frac{5L}{4}\|\bar X_G^k\|\leq \frac{5L(k+1)C_2}{4}\|b\| $ and due to $c_2\epsilon_\sharp\leq\frac{L}{2}\leq\frac{L(k+1)}{2}$, we have $c_2\|b\|^2\leq \frac{L(k+1)C_2}{2}\|b\|$. This allows to establish a bound for the norm of $Y^k_R$:
    \[          \|Y_R^k\|\leq \|Y_G^k\|+\|Y_G^k-Y_R^k\|\leq \tfrac{7}{4}L(k+1)C_2 \|b\|.      \]
     Therefore, by \cref{prop4-6}, we can bound the norm of the term $(Y^k_G)^\top Y_G^k - (Y^k_R)^\top Y^k_R$ as follows:
    \begin{align*}
        \|(Y^k_G)^\top Y^k_G-(Y^k_R)^\top Y^k_R\|&\leq \|Y_G^k-Y_R^k\|(\|Y^k_G\| +\|Y^k_R\|) \\ & \hspace{-20ex} \leq c_{k,2}\|b\|^2\left( \tfrac{7}{4}L(k+1)C_2+\tfrac{5}{4}L(k+1)C_2\right)\|b\| = 3c_{k,2}L(k+1)C_2\|b\|^3=c_{k,3}\|b\|^3.
    \end{align*}
    Our next task is to bound $\|((Y_G^k)^\top Y_G^k)^{-1}\|$. Using $A^\top A\succeq \frac{9}{16}\mu^2 I$ and \eqref{lower_bound_Xg}, we have: 
    \begin{align*}
        \|((Y_G^k)^\top Y_G^k)^{-1}\|&=\|((\bar X_G^k)^\top A^\top A \bar X_G^k)^{-1}\| \\ & \leq \frac{16}{9\mu^2} \|((\bar X_G^k)^\top\bar X_G^k)^{-1}\| \leq \frac{16C_3}{9\mu^2\|\bar X_G^k\|^2}\leq   \frac{16C_3L^2}{9\mu^2\|b\|^2}=\frac{c_4}{\|b\|^2},  
    \end{align*}
    Since $\epsilon_\sharp$ is chosen such that $1-c_{k,3}c_4\|b\|\geq \frac{1}{2}>0$, we can now apply Banach's perturbation lemma, see, e.g., \cite[Theorem 2.3.4]{golub2013matrix}, which implies:
    \begin{align*}
        \| ((Y_G^k)^\top Y_G^k)^{-1}-((Y_R^k)^\top Y_R^k)^{-1}\|&\leq  \frac{\|((Y_G^k)^\top Y_G^k)^{-1}\|^2\|(Y^k_G)^\top Y^k_G-(Y^k_R)^\top Y^k_R\|}{1-\|((Y_G^k)^\top Y_G^k)^{-1}\|\|(Y^k_G)^\top Y^k_G-(Y^k_R)^\top Y^k_R\|}     \\
        &\leq \frac{c_4^2c_{k,3}}{\|b\|-c_4c_{k,3}\|b\|^2}\leq \frac{2c_4^2c_{k,3}}{\|b\|}.
    \end{align*}
    Consequently, applying \cref{prop4-6}, it follows:
    \begin{align*}
        \|Y_G^k((Y_G^k)^\top Y_G^k)^{-1}- Y_R^k((Y_R^k)^\top Y_R^k)^{-1}\| & \\ & \hspace{-25ex} \leq \|((Y_G^k)^\top Y_G^k)^{-1}\|\|Y_G^k-Y_R^k\|+\|((Y_G^k)^\top Y_G^k)^{-1}-((Y_R^k)^\top Y_R^k)^{-1}\|\|Y_R^k\| \\ & \hspace{-25ex} \leq c_4c_{k,2}+\tfrac{7}{2}c_4^2c_{k,3}(k+1)C_2 = c_{k,5}
    \end{align*}
 and $\|Y_G^k((Y_G^k)^\top Y_G^k)^{-1}\|=\sqrt{\| ((Y_G^k)^\top Y_G^k)^{-1}\|}\leq\frac{\sqrt{c_4}}{\|b\|}$. Altogether, this yields: 
    \begin{align*}
        \|x^{k+1}_G-x^{k+1}_R\|&=\|    Y^k_G((Y_G^k)^\top Y^k_G)^{-1}(Y_G^k)^\top b-Y^k_R((Y_R^k)^\top Y^k_R)^{-1}(Y_R^k)^\top b\|\\
        & \hspace{-15.5ex} \leq  \|Y_G^k((Y_G^k)^\top Y_G^k)^{-1}- Y_R^k((Y_R^k)^\top Y_R^k)^{-1}\|\|Y_R^k\|\| b\| + \|Y_G^k((Y_G^k)^\top Y_G^k)^{-1}\|\|Y_R^k-Y_G^k\|\|b\|\\
        & \hspace{-15.5ex} \leq \tfrac{7}{4}L(k+1)C_2c_{k,5}\|b\|^2+ \sqrt{c_4}c_{k,2}\|b\|^2=\zeta_{k+1}\|b\|^2.
    \end{align*}
    This shows $\|x^k_G-x^{k}_R\|\leq \zeta_k\|b\|^2$ by induction. Mimicking \eqref{eq4-1}, we now obtain: 
    \begin{align*}
        \|\tilde x^k_G-\tilde x^k_R\|& \leq \|L^{-1}(A-B)(\bar x^k_G-x^0)\|+\|(I-L^{-1}B)(\bar x^k_G-\bar x^k_R)\|\\
        &\leq L^{-1}C_1C_2\|b\|^2+\|\bar x^k_G-\bar x^k_R\|\leq (L^{-1}C_1C_2+c_1)\|b\|^2.        
    \end{align*}
     Therefore, it suffices to choose $C:= \max\{ \max_{k=0,\dots,m}\zeta_k, L^{-1}C_1C_2+c_1\}$.
\end{proof}

\subsection{Connecting $\CR$ and $\CG$}
\label{cr_cg}
In this subsection, we assume that the matrix $B$ is symmetric and positive definite. Suppose we apply $\CR$ to the linear system $B(x-x^0)=b$ starting at $x^0$. Then, by \cref{cr_property}, we have:
\[    x^k_R={\argmin}_{x\in x^0+\cK^{k}(B,b)} \|B(x-x^0)-b\|^2= {\argmin}_{x\in x^0+\cK^{k}(B,b)} \|B(x-x^*)\|,                             \]
where $x^*:=B^{-1}b+x^0$ is the optimal solution of the linear system $B(x-x^0)=b$. Next, for $k=0,\dots,m$, we set $y^*:=B^{\frac12}x^*$, $\bar b :=B^{\frac12}b$ and $y^k:=B^{\frac12}x^k_R$. Then, by definition, we obtain:
\[    y^k={\argmin}_{y\in y^0+\cK^k(B,\bar b)}~(y-y^*)^\top B(y-y^*).             \]
According to \cite[Theorem 2]{gutknecht2007brief}, this means that $y^k$ coincides with the $k$-th iteration of $\CG$ applied to the linear system $B(y-y^*)=0$ with initial value $y^0 = B^\frac12 x^0$. Moreover, in this case, it follows $(x^k_R-x^*)^\top B(x^k_R-x^*)=\|y^k-y^*\|^2$. Based on this observation and connection between the $\CR$- and $\CG$-iterates, we now want to apply classical techniques for $\CG$, \cite{hestenes1952methods}, to study the behavior of the distance $\|y^k-y^*\|$ as the iteration $k$ increases. Our goal is to then transfer the obtained results back to $\CR$ and $\AAr$. As usual, we define the following terms: $\bar y^k=y^k-L^{-1}B(y^k-y^*)$,
\[     \tilde y^k=\bar y^k-L^{-1}B(\bar y^k-y^*), \quad \text{and} \quad \psi(y)=\frac{1}{2}(y-y^*)^\top B(y-y^*).\]         
Notice that the introduced linear transformations also preserve the latter gradient descent steps, i.e., it holds that $\bar y^k=B^{\frac{1}{2}}\bar x^k_R$ and $\tilde y^k=B^{\frac{1}{2}}\tilde x^k_R$. Here, the point $\bar y^k$ is obtained by applying one gradient step (for the objective function $\psi$) with stepsize $L^{-1}$ on the $\CG$-iteration $y^k$ and $\tilde y^k$ results from applying two gradient steps with stepsize $L^{-1}$ on $y^k$. Next, we collect several results from \cite{hestenes1952methods} for convenience and to fix the notations. The full $\CG$ algorithm is shown in \cref{algo3}.  

\begin{algorithm}
    \caption{$\CG$ for the linear system $B(y-y^*)=0$.}
    \label{algo3}
    \begin{algorithmic}[1] 
        \STATE Choose an initial point $y^0 \in \Rn$ and set $p^0=r^0=-B(y^0-y^*)$.
        \FOR{$i=0,1,\dots,n$}
        \STATE \textbf{if} {$\|r^{i}\|=0$} \textbf{then} Break; \textbf{end if} 
             \STATE $a_i=\frac{\|r^i\|^2}{\langle p^i,Bp^i\rangle} $.
             \STATE $y^{i+1}=y^i+a_ip^i$.
             \STATE $r^{i+1}=r^i-a_iBp^i.$
             \STATE $b_i=\frac{\|r^{i+1}\|^2}{\|r^i\|^2}$.
             \STATE $p^{i+1}=r^{i+1}+b_ip^i$.
        \ENDFOR
    \end{algorithmic}
\end{algorithm}
\begin{prop} \label{prop:cg}
    Let the sequence $\{y^k\}_k$ be generated by $\CG$ and let $y^{(k)}$ denote the projection of $y^{*}$ onto the affine space $\cA^k := y^0+\spa\{y^1-y^0,\dots,y^k-y^0\}$. Then, the following properties are satisfied:
\begin{enumerate}[label=\textup{\textrm{(\roman*)}},topsep=0pt,itemsep=0ex,partopsep=0ex]
        \item (\cite[Theorem 6.5]{hestenes1952methods}) $y^{(k+1)}=y^{k+1}+\frac{2\psi(y^{k+1})}{\|r^{k}\|^2}p^{k}.$
        \item (\cite[Equation (5:3a)]{hestenes1952methods}) For all $i \neq j$: $\langle r^i,r^j\rangle=0$.
        \item (\cite[Equation (5:3c)]{hestenes1952methods}) For all $i < j$, we have $\langle p^i,r^j\rangle=0$ and for all $i\geq j$, it holds that $\langle p^i,r^j\rangle=\|r^i\|^2$.
        \item (\cite[Equation (5:6b)]{hestenes1952methods}) For all $0\leq i\leq n$: $\|p^{i}\|^{2}=\|r^i\|^4\sum_{j=0}^i\frac{1}{\|r^j\|^2}$.
        \item (\cite[Equation (5:11)]{hestenes1952methods}) For all $0\leq i\leq n-1$: $\langle r^{i+1},Br^i\rangle=\langle r^{i+1},Bp^i \rangle=-a_i^{-1}{\|r^{i+1}\|^2}$.
        \item (\cite[Equation (5:6a)]{hestenes1952methods}) For all $0\leq i\leq j\leq n$: $\langle p^i,p^j\rangle=\frac{\|r^j\|^2\|p^i\|^2}{\|r^i\|^2}.$
        \item (\cite[Equation (5:4b)]{hestenes1952methods}) For all $i\neq j$: $\langle p^i,Bp^j \rangle=0$.
        \item (\cite[Equation (5:3d)]{hestenes1952methods}) For all $i\neq j, i\neq j+1$: $\langle r^i,Bp^j \rangle=0$.
        \item (\cite[Equation (5:12)]{hestenes1952methods}) We have $a_0=\frac{\|r^0\|^2}{\langle r^0,Br^0\rangle}$ and $\frac{\|p^i\|^2}{\langle p^i,Bp^i\rangle}>a_i>\frac{\|r^i\|^2}{\langle r^i,Br^i\rangle}$ for all $i > 0$.
        \item (\cite[Equation (5:8b)]{hestenes1952methods}) For all $i\geq 1$: $r^{i+1}=(1+b^\prime_{i-1})r^i-a_iBr^i-b^\prime_{i-1}r^{i-1}$, where $b^{\prime}_{i-1}=\frac{a_i}{a_{i-1}}b_{i-1}=\frac{a_i}{a_{i-1}}\frac{\|r^{i}\|^2}{\|r^{i-1}\|^2}$. 
    \end{enumerate}
\end{prop}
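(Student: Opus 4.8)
The plan is to recognize \cref{algo3} as the conjugate gradient method of Hestenes and Stiefel \cite{hestenes1952methods} applied to the quadratic functional $\psi(y)=\tfrac12(y-y^*)^\top B(y-y^*)$, and then to read off items (ii)--(x) directly from the cited equations. Under the identification of their system matrix with $B$, their iterate with $y$, their residual $h-Ax$ with $r=-B(y-y^*)$, and their quadratic functional (normalized to vanish at the solution) with $\psi$, the initialization $p^0=r^0=-B(y^0-y^*)$ and the recursions $a_i=\|r^i\|^2/\langle p^i,Bp^i\rangle$, $y^{i+1}=y^i+a_ip^i$, $r^{i+1}=r^i-a_iBp^i$, $b_i=\|r^{i+1}\|^2/\|r^i\|^2$, $p^{i+1}=r^{i+1}+b_ip^i$ coincide verbatim with the scheme in \cite[Section 3]{hestenes1952methods}. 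Hence each of (ii)--(x) is a literal transcription of the equation listed next to it, and no independent argument is needed; the only bookkeeping is to track the index shifts (e.g.\ the auxiliary coefficient $b'_{i-1}$ appearing in (x)) and to note that all the identities hold on the stated index ranges because the loop in \cref{algo3} breaks as soon as $\|r^i\|=0$, after which the quantities in the remaining statements are never formed.

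For item (i), I would first observe that, prior to termination, $a_i>0$ and $y^{j}-y^0=\sum_{i=0}^{j-1}a_ip^i$, so that $\cA^{k+1}=y^0+\spa\{y^1-y^0,\dots,y^{k+1}-y^0\}=y^0+\spa\{p^0,\dots,p^k\}=y^0+\cK^{k+1}(B,r^0)$. The Euclidean projection $y^{(k+1)}$ of $y^*$ onto this affine space is then precisely the quantity analyzed in \cite[Theorem 6.5]{hestenes1952methods} (the quadratic there being normalized to vanish at the solution, which is exactly $\psi$ vanishing at $y^*$), and that result yields $y^{(k+1)}=y^{k+1}+\tfrac{2\psi(y^{k+1})}{\|r^k\|^2}p^k$. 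In the degenerate case where $\CG$ has already terminated at some index $\ell\le k$, one has $y^\ell=y^*$, hence $\psi(y^{k+1})=0$ and both sides of the claimed identity reduce to $y^*$, so the formula persists.

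I do not anticipate a genuine obstacle here, since the proposition merely collects classical $\CG$ identities; the single point requiring care is to confirm that the normalization of the quadratic functional and the sign convention for the residual in \cite{hestenes1952methods} match ours --- i.e.\ that their residual corresponds to $r^i=-B(y^i-y^*)$ and their normalized quadratic corresponds to $\psi$ with $\min\psi=\psi(y^*)=0$ --- after which every cited relation transfers without change, and (i) follows as described above.
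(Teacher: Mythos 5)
Your proposal matches the paper's treatment: the paper offers no separate proof of this proposition and relies entirely on the citations to Hestenes--Stiefel embedded in each item, which is precisely the transcription argument you give. Your added care in checking the sign/normalization conventions and in identifying $\cA^{k+1}$ with $y^0+\cK^{k+1}(B,r^0)$ for item (i) is sound and only makes explicit what the paper leaves implicit (the paper itself invokes the same Krylov-space identification later, in the proof of \cref{thm4-9}).
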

 
The properties stated in \cref{prop:cg}  will be referred to as Property (\romannumeral1)--(\romannumeral10) in the following. Before studying the convergence behavior of $\CG$ in terms of $\|\bar y^k-y^*\|$ and $\|\tilde y^{k-1}-y^*\|$, let us briefly discuss our underlying motivation.

Our final aim is to prove $f(x^{k+1})\leq f(g(x^k))$ (including some potential higher-order error terms), where $\{x^k\}_k$ is generated by {$\AAr$}. Applying \cref{prop4-4}, the $\AAn$ step $x^{k+1}$ is equal to $\bar x^{k}_G$, which is close to $\bar x^{k}_R$. On the other hand, we have $g(x^k)=\bar g(x^{k})=\bar g(\bar x^{k-1}_G)=\tilde x^{k-1}_G$, which is close to $\tilde x^{k-1}_R$. Hence, up to certain error terms, the descent condition ``$f(x^{k+1})\leq f(g(x^k))$'' can now be formulated as follows:
\[  f(\bar x^{k}_R) \leq f(\tilde x^{k-1}_{R}).           \]
Expanding $f$ at $x^0$ (and again ignoring higher-order error terms), this can be further rewritten as:
\begin{align*}
    \nabla f(x^0)^\top(\bar x^k_R-x^0)+\frac12(\bar x^k_R-x^0)^\top\nabla^2f(x^0)(\bar x^k_R-x^0) & \\ & \hspace{-40ex} \leq \nabla f(x^0)^\top(\tilde x^{k-1}_R-x^0)+\frac{1}{2}(\tilde x^{k-1}_R-x^0)^\top\nabla f(x^0)(\tilde x^{k-1}_R-x^0). 
\end{align*}
Noticing $B=\nabla^2f(x^0)$, $b=-\nabla f(x^0)$, and $x^*=B^{-1}b+x^0$, this is equivalent to
\[        (\bar x^k_R-x^*)^\top B(\bar x^{k}_R-x^*)  \leq (\tilde x^{k-1}_R-x^*)^\top B(\tilde x^{k-1}_R-x^*),                        \] 
which, by the previously introduced transformation, can be expressed as $\|\bar y^k-y^*\|^2\leq \|\tilde y^{k-1}-y^*\|^2$. This is exactly what we want to show in \cref{thm4-9}. We note that the proof of \cref{thm4-9} would be significantly easier if the stepsize in the gradient mapping $g$ is sufficiently small (potentially much smaller than $L^{-1}$). Here, we provide a general result covering the core case $g(x) = x-\frac{1}{L}\nabla f(x)$. 
\begin{thm}
    \label{thm4-9}
    Suppose that  $\{y^k\}_k$ is generated by $\CG$ applied to the linear system $B(y-y^*)=0$, where $B \in \R^{n \times n}$ is symmetric, positive definite with $\nu := \frac{L}{\|B\|}\geq 1$. Then, we have:
    \begin{equation}   \|\bar y^{k}-y^*\|^2+\left[2\nu+\frac{1}{\nu^2}-3\right]\frac{\|r^k\|^2}{L^2}+\left[\nu+\frac{1}{\nu}-2\right]^2\frac{\|r^{k-1}\|^2}{L^2}\leq \|\tilde y^{k-1}-y^*\|^2. \label{eq:a-nice-result}              \end{equation}
\end{thm}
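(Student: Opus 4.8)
The plan is to convert \eqref{eq:a-nice-result} into an algebraic identity between $\CG$ quantities and then verify a scalar inequality in $\nu$. Put $M:=I-L^{-1}B$; since $B\succ0$ and $\|B\|=L/\nu\le L$, we have $0\preceq M\prec I$, and from the definitions $\bar y^k=y^*+M(y^k-y^*)$, $\bar y^{k-1}=y^*+M(y^{k-1}-y^*)$, $\tilde y^{k-1}=y^*+M^2(y^{k-1}-y^*)$. The $\CG$ update $y^k=y^{k-1}+a_{k-1}p^{k-1}$ together with $-L^{-1}B(y^{k-1}-y^*)=L^{-1}r^{k-1}$ yields $y^k-y^*=(\bar y^{k-1}-y^*)+w$ with $w:=a_{k-1}p^{k-1}-L^{-1}r^{k-1}$, hence $\bar y^k=\tilde y^{k-1}+Mw$. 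Noting $2\nu+\nu^{-2}-3=(\nu-1)^2(2\nu+1)/\nu^2$ and $(\nu+\nu^{-1}-2)^2=(\nu-1)^4/\nu^2$, the target becomes $\|\tilde y^{k-1}-y^*\|^2-\|\bar y^k-y^*\|^2\ge\frac{(\nu-1)^2}{\nu^2L^2}\!\left[(2\nu+1)\|r^k\|^2+(\nu-1)^2\|r^{k-1}\|^2\right]=:\mathrm{corr}$.

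To remove the $B^{-1}$ hidden in $\tilde y^{k-1}-y^*$, I would project. Both $\bar y^k$ and $\tilde y^{k-1}$ lie in the affine Krylov space $\cA^{k+1}:=y^0+\spa\{y^1-y^0,\dots,y^{k+1}-y^0\}=y^0+\cK^{k+1}(B,r^0)$, since $\bar y^k-y^0=L^{-1}r^k+\sum_{j<k}a_jp^j$ and $\tilde y^{k-1}-y^0=L^{-1}(2I-L^{-1}B)r^{k-1}+\sum_{j<k-1}a_jp^j$ both lie in $\spa\{p^0,\dots,p^k\}$. Letting $y^{(k+1)}$ be the Euclidean projection of $y^*$ onto $\cA^{k+1}$, Property~(\romannumeral 1) gives $y^{(k+1)}=y^{k+1}+\gamma_kp^k$ with $\gamma_k:=2\psi(y^{k+1})/\|r^k\|^2\ge0$, so $y^{(k+1)}-y^k=\beta_kp^k$ with $\beta_k:=a_k+\gamma_k$. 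Subtracting the common term $\|y^{(k+1)}-y^*\|^2$ from the Pythagorean identity applied at $\bar y^k,\tilde y^{k-1}\in\cA^{k+1}$, and using $\tilde y^{k-1}=\bar y^k-Mw$ and $\bar y^k-y^{(k+1)}=L^{-1}r^k-\beta_kp^k$, a short computation reduces the claim to
\[\|Mw\|^2-\tfrac{2}{L}\langle Mw,r^k\rangle+2\beta_k\langle Mw,p^k\rangle\ \ge\ \mathrm{corr}.\]

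The three left-hand terms are then made explicit. Writing $Mw=w-L^{-1}Bw$, $Bw=(r^{k-1}-r^k)-L^{-1}Br^{k-1}$, and $Br^{k-1}=Bp^{k-1}-b_{k-2}Bp^{k-2}=\tfrac{r^{k-1}-r^k}{a_{k-1}}-\tfrac{b_{k-2}}{a_{k-2}}(r^{k-2}-r^{k-1})$ (for $k\ge2$; when $k=1$ the $r^{k-2}$-term is absent, as $p^0=r^0$), one expands $Mw$ in the orthogonal set $\{p^{k-1},r^{k-2},r^{k-1},r^k\}$. All inner products needed come from the definition of $a_j$ and Properties~(\romannumeral 2), (\romannumeral 3), (\romannumeral 5)--(\romannumeral 8), (\romannumeral 10): $\langle r^i,r^j\rangle=0$ for $i\ne j$; $\langle p^{k-1},r^k\rangle=0$, $\langle p^{k-1},r^{k-1}\rangle=\langle p^{k-1},r^{k-2}\rangle=\|r^{k-1}\|^2$; $\langle p^{k-1},Bp^{k-1}\rangle=\|r^{k-1}\|^2/a_{k-1}$, $\langle r^k,Br^{k-1}\rangle=-\|r^k\|^2/a_{k-1}$; $\langle r^{k-1},Bp^k\rangle=\langle p^{k-1},Bp^k\rangle=0$; $\|p^{k-1}\|^2=\|r^{k-1}\|^4\sum_{j\le k-1}\|r^j\|^{-2}$, $\langle p^{k-1},p^k\rangle=\|r^k\|^2\|p^{k-1}\|^2/\|r^{k-1}\|^2$, and $b_{k-2}/a_{k-2}=b^\prime_{k-2}/a_{k-1}$. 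This gives $\langle Mw,r^k\rangle=\tfrac1L\!\left(1-\tfrac1{La_{k-1}}\right)\|r^k\|^2$, $\langle Mw,p^k\rangle=(T-L^{-1})\|r^k\|^2$ with $T:=\|p^{k-1}\|^2/\langle p^{k-1},Bp^{k-1}\rangle=a_{k-1}\|r^{k-1}\|^2\sum_{j\le k-1}\|r^j\|^{-2}$, and a closed (if lengthy) expression for $\|Mw\|^2$ in terms of $\|r^{k-2}\|^2,\|r^{k-1}\|^2,\|r^k\|^2,a_{k-1},a_{k-2}$ and $\sum_{j\le k-1}\|r^j\|^{-2}$.

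Finally I would plug in the structural bounds and reduce to elementary algebra. The crucial one is $a_j\ge\nu/L$ for every $j$, which follows from Property~(\romannumeral 9) and $\langle r^j,Br^j\rangle\le\|B\|\|r^j\|^2=(L/\nu)\|r^j\|^2$ --- i.e.\ the stepsize $1/L$ never exceeds the exact $\CG$-type stepsize once $\nu\ge1$, and this is what dictates the precise $\nu$-dependence of $\mathrm{corr}$. Using also $T\ge\nu/L$, $\beta_k\ge a_k\ge\nu/L$, $\gamma_k\ge0$, and $\sum_{j\le k-1}\|r^j\|^{-2}\ge\|r^{k-1}\|^{-2}$, one lower-bounds $2\beta_k\langle Mw,p^k\rangle$ and shows that $\|Mw\|^2$ absorbs the negative term $-\tfrac2L\langle Mw,r^k\rangle$ together with the leftover gap against $\mathrm{corr}$; after factoring $(\nu-1)^2/\nu^2$ out, the inequality is tight exactly when $La_j=LT=\nu$. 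The degenerate case where $\CG$ stops with some $r^j=0$, $j\le k$, is handled directly: if $r^k=0$ then $\bar y^k=y^*$ and the claim reduces to $\|(I-L^{-1}B)^2z\|^2\ge(\nu+\nu^{-1}-2)^2\|Bz\|^2/L^2$ for $z=y^{k-1}-y^*$, which follows from $(I-L^{-1}B)^2\succeq(1-1/\nu)^2I$ and $\|z\|\ge(\nu/L)\|Bz\|$. I expect the main obstacle to be this last stage: carrying out the $\|Mw\|^2$ expansion without error (several $\CG$ identities interact and the $r^{k-2}$-terms must be tracked precisely), and then verifying the resulting scalar inequality --- showing that replacing $\beta_k,T,a_{k-1}$ by $\nu/L$ loses exactly what the correction terms in \eqref{eq:a-nice-result} permit, which is what pins down the constants $2\nu+\nu^{-2}-3$ and $(\nu+\nu^{-1}-2)^2$.
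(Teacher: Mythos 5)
Your proposal follows essentially the same route as the paper's proof: project $y^*$ onto the Krylov affine space $\cA^{k+1}$ via Property (\romannumeral 1), use the Pythagorean decomposition to reduce the claim to the quadratic expression $\|Mw\|^2-\tfrac{2}{L}\langle Mw,r^k\rangle+2\beta_k\langle Mw,p^k\rangle$ with $Mw=(I-L^{-1}B)(a_{k-1}p^{k-1}-L^{-1}r^{k-1})$, expand it with the $\CG$ identities, split off the case $k=1$ (where the $r^{k-2}$ terms vanish since $p^0=r^0$), and close with $La_j\geq\nu$ from Property (\romannumeral 9). The only caveat is that you defer the decisive $\|Mw\|^2$ expansion and the final scalar verification --- exactly where the paper spends most of its effort (its terms $T_1$--$T_3$ and $Q_1$--$Q_8$) --- but every identity and bound you list for that step is correct and sufficient to complete it.
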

\begin{proof}
    First, by \cite[Equation (21)]{gutknecht2007brief} and \cite[Theorem 5.3]{jorge2006numerical}, we have $\cA^k=y^0+\cK^{k}(B,r^0)$ and $y^k \in y^0+\cK^{k}(B,r^0)$. Hence, both $\bar y^k$ and $\tilde y^{k-1}$ belong to the affine space $y^0+\cK^{k+1}(B,r^0)=\cA^{k+1}$. Furthermore, by the definition of $y^{(k+1)}$, we can derive the following decomposition properties:
    \begin{equation}
       \begin{aligned}
               \|\bar y^{k}-y^*\|^2&=\|y^{(k+1)}-\bar y^{k}\|^2+\| y^{(k+1)}-y^*\|^2,\\
         \|\tilde y^{k-1}-y^*\|^2&=\|y^{(k+1)}-\tilde y^{k-1}\|^2+\| y^{(k+1)}-y^*\|^2.           
       \end{aligned} 
       \label{dist_decom}
    \end{equation}
    Therefore, it holds that:
    \begin{align}
        \label{norm_diff}
        \|\tilde y^{k-1}-y^*\|^2-\|\bar y^{k}-y^*\|^2&= \|y^{(k+1)}-\tilde y^{k-1}\|^2- \|y^{(k+1)}-\bar y^{k}\|^2\notag\\
        &=\|\tilde y^{k-1}-\bar y^k\|^2+2\langle \tilde y^{k-1}-\bar y^k,\bar y^k-y^{(k+1)}\rangle.        
    \end{align}
    Using Property (\rmnum1) and the definition of the $\CG$-step, we have $y^{(k+1)}=y^{k+1}+\frac{2\psi(y^{k+1})}{\|r^{k}\|^2}p^{k}$ and $y^{k+1}=y^k+a_kp^k$. Consequently, setting $\gamma_k={2\psi(y^{k+1})}/{\|r^{k}\|^2}+a_k$ and applying $r^k = - B(y^k-y^*)$, we obtain
    \begin{align}
        \label{eqpy}
        y^{(k+1)}-\bar y^k=\left[\frac{2\psi(y^{k+1})}{\|r^{k}\|^2}+a_k\right]p^{k} + [y^{k}-\bar y^k]= \gamma_kp^k-\frac{1}{L}r^k. 
    \end{align}
    Moreover, we have $y^k-\bar y^{k-1}=a_{k-1}p^{k-1}-\frac{1}{L}r^{k-1}$ and
    \begin{align}
        \bar y^k-\tilde y^{k-1}=(I-L^{-1}B)(y^k-\bar y^{k-1})=(I-L^{-1}B)(a_{k-1}p^{k-1}-{L}^{-1}r^{k-1}). \label{eqy}  
    \end{align}
    We now consider the first term in \eqref{norm_diff}:
    \begin{align*}
        \|\tilde y^{k-1}-\bar y^k\|^2=\|(I- L^{-1}B)(a_{k-1}p^{k-1}-{L}^{-1}r^{k-1})\|^2 = T_1 - {2L^{-1}}T_2 + L^{-2} T_3,
    \end{align*}
    where $T_1 = a_{k-1}^2\|(I-L^{-1}B)p^{k-1}\|^2$, $T_2 = \langle a_{k-1}(I-L^{-1}B)p^{k-1},(I-L^{-1}B)r^{k-1}\rangle$, and $T_3 = \|(I-L^{-1}B)r^{k-1}\|^2$. The update rule for $r^k$ yields
    \begin{align}
       \label{eq-rp}
        a_{k-1}Bp^{k-1}=r^{k-1}-r^k.       
    \end{align}
    We first expand the term $T_1$:
    \begin{align*}
        T_1&= \|a_{k-1}p^{k-1}-{L}^{-1}(r^{k-1}-r^k)\|^2\\
        &=a_{k-1}^2\|p^{k-1}\|^2-{2a_{k-1}}L^{-1}\langle p^{k-1},r^{k-1}-r^k\rangle+L^{-2}\|r^{k-1}-r^k\|^2.
    \end{align*}
    Applying Property (\rmnum2) and (\rmnum{3}), it holds that:
    \[   \|r^{k-1}-r^k\|^2=\|r^{k-1}\|^2+\|r^k\|^2, \quad   \langle p^{k-1},r^{k-1}-r^k\rangle=\|r^{k-1}\|^2,                                    \]
    and thus, it follows $T_1 = a_{k-1}^2\|p^{k-1}\|^2-\frac{2a_{k-1}}{L}\|r^{k-1}\|^2+\frac{1}{L^2}(\|r^{k-1}\|^2+\|r^k\|^2)$. Next, we estimate the  term $T_2$:
    \[     T_2=\langle a_{k-1}p^{k-1},r^{k-1}\rangle-{2}L^{-1}\langle a_{k-1}Bp^{k-1},r^{k-1}\rangle+L^{-2}\langle a_{k-1}Bp^{k-1},Br^{k-1} \rangle.                       \]
    By Property (\rmnum3), we have $\langle a_{k-1}p^{k-1},r^{k-1}\rangle=a_{k-1}\|r^{k-1}\|^2$. Furthermore, applying \eqref{eq-rp} and Property (\rmnum2), we obtain $\langle a_{k-1}Bp^{k-1},r^{k-1}\rangle= \langle r^{k-1}-r^{k},r^{k-1}\rangle=\|r^{k-1}\|^2$ and $\langle a_{k-1}Bp^{k-1},Br^{k-1} \rangle=\langle r^{k-1}-r^k,Br^{k-1}\rangle$. Utilizing Property (\rmnum5), we can infer:
    \[  \langle a_{k-1}Bp^{k-1},Br^{k-1} \rangle = \langle r^{k-1}-r^k,Br^{k-1}\rangle=\|r^{k-1}\|_B^2+a_{k-1}^{-1}\|r^k\|^2.                \]
    Substituting these expressions yields $T_2 =a_{k-1}\|r^{k-1}\|^2-\frac{2}{L}\|r^{k-1}\|^2+\frac{1}{L^2}\|r^{k-1}\|_B^2+\frac{1}{L^2a_{k-1}}\|r^k\|^2$. Finally, let us consider the term $T_3$; we have:
    \begin{align*}
        T_3=\|r^{k-1}\|^2-2L^{-1}\|r^{k-1}\|_B^2+L^{-2}\|Br^{k-1}\|^2. 
    \end{align*}
    Together, this establishes the following representation of $\|\tilde y^{k-1}-\bar y^k\|^2$:
    \begin{align}
        \label{eq:ynormsquaretermform1}
        \|\tilde y^{k-1}-\bar y^k\|^2 & = a_{k-1}^2\|p^{k-1}\|^2+\left[\frac{6}{L^2}-\frac{4a_{k-1}}{L}\right]\|r^{k-1}\|^2 \\ & \hspace{4ex}-\frac{4}{L^3}\|r^{k-1}\|_B^2+\frac{1}{L^4}\|Br^{k-1}\|^2+\left[\frac{1}{L^2}-\frac{2}{L^3a_{k-1}}\right]\|r^k\|^2. \nonumber 
    \end{align}
   We continue with the inner product term $\langle \tilde y^{k-1}-\bar y^k,\bar y^k-y^{(k+1)}\rangle $. By \eqref{eqpy} and \eqref{eqy}, we have:
    \begin{align*}
        \langle \tilde y^{k-1}-\bar y^k,\bar y^k-y^{(k+1)}\rangle& =\langle  (I-L^{-1}B)(a_{k-1}p^{k-1}-{L}^{-1}r^{k-1}),\gamma^kp^k-{L}^{-1}r^{k}\rangle  \\
        & = Q_1 - L^{-1} Q_2,
    \end{align*}
    where $Q_1 = \langle a_{k-1}p^{k-1}-\frac{1}{L}r^{k-1},\gamma_kp^k-\frac{1}{L}r^k \rangle$ and $Q_2 = \langle a_{k-1}Bp^{k-1}-\frac{1}{L}Br^{k-1},\gamma_kp^k-\frac{1}{L}r^k\rangle$. Applying Property (\rmnum6), (\rmnum2) and (\rmnum3), it holds that:
    \[  \langle p^{k-1},p^k\rangle=\frac{\|r^k\|^2\|p^{k-1}\|^2}{\|r^{k-1}\|^2},\quad \langle r^{k-1}, p^k\rangle=\|r^k\|^2,\quad \langle p^{k-1},r^k \rangle=\langle r^k,r^{k-1}\rangle =0,                           \]
     which implies $Q_1=a_{k-1}\gamma_k\langle p^{k-1},p^k\rangle-\frac{\gamma_k}{L}\langle r^{k-1},p^k\rangle-\frac{a_{k-1}}{L}\langle p^{k-1},r^k\rangle+\frac{1}{L^2}\langle r^{k},r^{k-1}\rangle = a_{k-1}\gamma_k\frac{\|r^k\|^2\|p^{k-1}\|^2}{\|r^{k-1}\|^2}-\frac{\gamma_k}{L}\|r^k\|^2$. Similarly, we can expand $Q_2$ as follows:
    \begin{align*}
        Q_2 &=a_{k-1}\gamma_k\langle Bp^{k-1},p^k\rangle-\frac{\gamma_k}{L}\langle Br^{k-1},p^k\rangle-\frac{a_{k-1}}{L}\langle Bp^{k-1},r^k\rangle+\frac{1}{L^2}\langle Br^{k},r^{k-1}\rangle.
    \end{align*}
    Applying Property (\rmnum7), (\rmnum8), (\rmnum{2}), (\rmnum3), (\rmnum5), and \eqref{eq-rp}, we can infer $ \langle Bp^{k-1},p^k\rangle=0$, $\langle Br^{k-1},p^k\rangle=0$, $\langle a_{k-1}Bp^{k-1},r^k\rangle=\langle r^{k-1}-r^k,r^k\rangle=-\|r^k\|^2$, and $\langle Br^k,r^{k-1}\rangle=-\frac{\|r^k\|^2}{a_{k-1}}$, which yields $Q_2=\frac{1}{L}(1-\frac{1}{La_{k-1}})\|r^{k}\|^2.$ Therefore, the inner product term $\langle \tilde y^{k-1}-\bar y^k,\bar y^k-y^{(k+1)}\rangle $ is given by:
      \[  \langle \tilde y^{k-1}-\bar y^k,\bar y^k-y^{(k+1)}\rangle=a_{k-1}\gamma_k\frac{\|r^k\|^2\|p^{k-1}\|^2}{\|r^{k-1}\|^2}-\left[\frac{\gamma_k}{L}+\frac{1}{L^2}-\frac{1}{L^3a_{k-1}}\right]\|r^k\|^2. \]
    Summing \eqref{eq:ynormsquaretermform1} and the previous expression, we obtain:
    \begin{align}
        \nonumber &\|\tilde y^{k-1}-\bar y^k\|^2+2  \langle \tilde y^{k-1}-\bar y^k,\bar y^k-y^{(k+1)}\rangle\\  
        \nonumber & \hspace{5ex}=a_{k-1}^2\|p^{k-1}\|^2+\left[\frac{6}{L^2}-\frac{4a_{k-1}}{L}\right]\|r^{k-1}\|^2-\frac{4}{L^3}\|r^{k-1}\|_B^2+\frac{1}{L^4}\|Br^{k-1}\|^2 \\ & \hspace{9ex}+\left[2a_{k-1}\gamma_k\frac{\|r^k\|^2\|p^{k-1}\|^2}{\|r^{k-1}\|^2}-\frac{2\gamma_k}{L}\|r^k\|^2-\frac{1}{L^2}\|r^k\|^2 \right]. \label{eq:let-it-be-a-number}
    \end{align}
    We continue with two sub-cases.
     
        \textbf{Case 1:} $k=1$. Using the fact $r^0=p^0$, Property~(\rmnum2), and \eqref{eq-rp}, it follows $\|Br^0\|^2=\|Bp^0\|^2=a_0^{-2}\|r^0-r^1\|^2=a_0^{-2}(\|r^0\|^2+\|r^1\|^2)$ and $\langle r^0,Br^0 \rangle=\langle r^0,Bp^0\rangle=a_0^{-1}\langle r^0, r^0-r^1\rangle=a_0^{-1}\|r^0\|^2$. Using these two equalities, we can simplify \eqref{eq:let-it-be-a-number} to: 
       \begin{align*} \|\tilde y^{0}-\bar y^1\|^2+2  \langle \tilde y^{0}-\bar y^1,\bar y^1-y^{(2)}\rangle & \\ & \hspace{-25ex}=a_{0}^2\|r^{0}\|^2+\left[\frac{6}{L^2}-\frac{4a_{0}}{L}\right]\|r^{0}\|^2-\frac{4}{L^3a_0}\|r^{0}\|^2+\frac{1}{L^4a_0^2}(\|r^0\|^2+\|r^{1}\|^2)\\
        & \hspace{-21ex}+\left[2a_{0}\gamma_1\|r^1\|^2-2\gamma_1L^{-1}\|r^1\|^2-L^{-2}\|r^1\|^2 \right] \\ 
        & \hspace{-25ex}=L^{-2}(Q_3\|r^0\|^2+Q_4\|r^1\|^2),
        \end{align*}
        where $Q_3$ and $Q_4$ are defined as $Q_3=a_0^2L^2+6-4a_0L-{4}(a_0L)^{-1}+(a_0L)^{-2}$ and $Q_4=2a_0\gamma_1L^2-2\gamma_1L-1+(a_0L)^{-2}$. By Property (\rmnum{9}), we have $a_0={\|r^0\|^2}/{\langle r^0, Br^0\rangle}\geq \frac{1}{\|B\|}\geq \frac{1}{L}$ and $a_1>{\|r^1\|^2}/{\langle r^1,Br^1 \rangle}\geq\frac{1}{\|B\|}\geq  \frac{1}{L}$. Hence, by the definition of $\gamma_1$, we can infer $ \gamma_1L\geq a_1L>{L}\|B\|^{-1}=\nu\geq 1$ and $a_0L\geq \nu\geq 1$. Therefore, it holds that:
        \begin{align*}
            Q_4&=2\gamma_1L(a_0L-1)-1+(a_0L)^{-2}\geq 2a_0L+(a_0L)^{-2}-3 \geq 2\nu+\nu^{-2}-3\geq 0,
        \end{align*}
        where -- in the last equality -- we used the fact that the function $x\mapsto 2x+x^{-2}$ is monotonically increasing for $x\geq 1$. Concerning $Q_3$, we notice:
        \[  Q_3 = (a_0L+{(a_0L)^{-1}}-2)^2\geq 0.   \]
        Since $x\mapsto x+\frac{1}{x}-2$ is monotonically increasing and nonnegative for $x\in [1,\infty)$, we can further infer $Q_3 = (a_0L+{(a_0L)^{-1}}-2)^2 \geq (\nu+\nu^{-1}-2)^2$, Together, we obtain $\|\bar y^{1}-y^*\|^2+(\nu+\nu^{-1}-2)^2\tfrac{\|r^0\|^2}{L^2}+(2\nu+\nu^{-2}-3)\tfrac{\|r^1\|^2}{L^2}\leq \|\tilde y^{0}-y^*\|^2$. \vspace{0.5ex}
       
        \textbf{Case 2:} $k\geq 2$. We first utilize Property (\rmnum10): $a_{k-1}Br^{k-1}=(1+b_{k-2}^{\prime})r^{k-1}-r^k-b^{\prime}_{k-2}r^{k-2}$. Along with Property (\rmnum2), this allows to calculate $\|r^{k-1}\|_B^2$ and $\|Br^k\|^2$: 
        \begin{equation*}
            \begin{aligned}
                & \langle r^{k-1},Br^{k-1}\rangle =\frac{1}{a_{k-1}}\langle r^{k-1},(1+b_{k-2}^{\prime})r^{k-1}-r^k-b^{\prime}_{k-2}r^{k-2}\rangle=\frac{1+b^\prime_{k-2}}{a_{k-1}}\|r^{k-1}\|^2, \\
               & \|Br^{k-1}\|^2 =\langle Br^{k-1},Br^{k-1}\rangle=\frac{(1+b^\prime_{k-2})^2}{a_{k-1}^2}\|r^{k-1}\|^2+\frac{1}{a_{k-1}^2}\|r^k\|^2+\frac{(b^{\prime}_{k-2})^2}{a_{k-1}^2}\|r^{k-2}\|^2. 
            \end{aligned}
            \end{equation*}
        Therefore, the term \eqref{eq:let-it-be-a-number} can be decomposed as follows: $\|\tilde y^{k-1}-\bar y^k\|^2+2  \langle \tilde y^{k-1}-\bar y^k,\bar y^k-y^{(k+1)}\rangle = Q_5 + Q_6$, where
        \begin{align*}
            Q_5 &=a_{k-1}^2\|p^{k-1}\|^2+\left[\tfrac{6}{L^2}-\tfrac{4a_{k-1}}{L}-\tfrac{4(1+b^{\prime}_{k-2})}{L^3a_{k-1}}+\tfrac{(1+b^{\prime}_{k-2})^2}{L^4a_{k-1}^2} \right]\|r^{k-1}\|^2+\tfrac{(b^{\prime}_{k-2})^2}{a_{k-1}^2L^4}\|r^{k-2}\|^2\\
            Q_6 & = 2a_{k-1}\gamma_k\tfrac{\|r^k\|^2\|p^{k-1}\|^2}{\|r^{k-1}\|^2}-\tfrac{2\gamma_k}{L}\|r^k\|^2-\tfrac{1}{L^2}\|r^k\|^2+\tfrac{1}{L^4a_{k-1}^2}\|r^k\|^2.
        \end{align*}
        We start with bounding $Q_6$. First, by Property (\rmnum4), it holds that:
        \[  2a_{k-1}\gamma_k\frac{\|r^k\|^2\|p^{k-1}\|^2}{\|r^{k-1}\|^2}\geq  2a_{k-1}\gamma_k\frac{\|r^k\|^2\|r^{k-1}\|^2}{\|r^{k-1}\|^2}=2a_{k-1}\gamma_k\|r^k\|^2.            \]
        Thus, we have $Q_6\geq \frac{1}{L^2} [2a_{k-1}\gamma_kL^2-2\gamma_kL-1+(a_{k-1}L)^{-2}]\|r^k\|^2$. The coefficient in the parentheses can be shown to be larger or equal than $2\nu+\nu^{-2}-3$ by using the same strategy as in \textbf{Case 1} for $Q_4$. This yields $Q_6\geq (2\nu+\nu^{-2}-3) L^{-2}{\|r^k\|^2}$. Next, recalling the definition of $b^{\prime}_{k-2}$ (see Property (\rmnum10)), we obtain:
        \[   b^{\prime}_{k-2}=\frac{a_{k-1}}{a_{k-2}}\frac{\|r^{k-1}\|^2}{\|r^{k-2}\|^2} \quad \implies \quad \frac{(b^{\prime}_{k-2})^2}{a_{k-1}^2L^4}\|r^{k-2}\|^2=\frac{\|r^{k-1}\|^2}{a_{k-2}^2L^4\|r^{k-2}\|^2} \|r^{k-1}\|^2.                            \]
        In addition, by Property (\rmnum4), it follows:
        \[  a_{k-1}^2\|p^{k-1}\|^2=a_{k-1}^2\|r^{k-1}\|^4{\sum}_{j=0}^{k-1}\frac{1}{\|r^j\|^2}\geq a_{k-1}^2\|r^{k-1}\|^2+a_{k-1}^2\frac{\|r^{k-1}\|^2}{\|r^{k-2}\|^2}\|r^{k-1}\|^2.                       \]
        Setting $Q_7 = a_{k-1}^2L^2 + 6 - 4a_{k-1}L - 4(a_{k-1}L)^{-1} + (a_{k-1}L)^{-2} = (a_{k-1}L + (a_{k-1}L)^{-1}-2)^2$ and using the previous inequalities, we can lower bound $Q_5$ by:
        \begin{align*} Q_5 & \geq\left[Q_7-\tfrac{4b^{\prime}_{k-2}}{a_{k-1}L}+\tfrac{(1+b^{\prime}_{k-2})^2-1}{a_{k-1}^2L^2} +\left[a_{k-1}^2L^2 + \tfrac{1}{a_{k-2}^2L^2}\right]\tfrac{\|r^{k-1}\|^2}{\|r^{k-2}\|^2} \right]\frac{\|r^{k-1}\|^2}{L^2}. \end{align*}
       Let us denote the term in parentheses by $Q_8$. It suffices to show that $Q_8$ is nonnegative. In particular, it holds that:
        \begin{align*}
        Q_8 & \geq Q_7 +\left[a_{k-1}L-\frac{1}{a_{k-2}L}\right]^2\frac{\|r^{k-1}\|^2}{\|r^{k-2}\|^2}+\frac{2a_{k-1}}{a_{k-2}}\frac{\|r^{k-1}\|^2}{\|r^{k-2}\|^2}-\frac{4b^{\prime}_{k-2}}{a_{k-1}L}+\frac{2b^{\prime}_{k-2}}{a_{k-1}^2L^2} \\
            & \geq Q_7 +2b^{\prime}_{k-2}-\frac{4b^{\prime}_{k-2}}{a_{k-1}L}+\frac{2b^{\prime}_{k-2}}{a_{k-1}^2L^2} \geq \left[\nu+\nu^{-1}-2\right]^2+2b_{k-2}^{\prime}\left[1-(a_{k-1}L)^{-1}\right]^2,
        \end{align*}
        where we again used $a_{k-1}L \geq \nu$. This finally establishes \eqref{eq:a-nice-result}, which concludes the proof of \cref{thm4-9}.  
\end{proof}
The previous result shows that performing two gradient steps on $y^{k-1}$ achieves less progress in terms of the distance to the optimal solution compared to performing one gradient step on $y^k$. In fact, we are able to prove a similar result for $\CG$, which is of independent interest. More precisely, $\CG$ can decrease the distance to the optimal solution no slower than the gradient method with stepsize $\frac1L$. Hence, $y^k$ can provide more progress than $\bar y^{k-1}$. The proof is much easier and is deferred to \cref{app:cg-proof}.

\begin{thm}
    \label{thm4-10}
    Let $\{y^k\}_k$ be generated by $\CG$ applied to the system $B(y-y^*)=0$, where $B \in \R^{n \times n}$ is symmetric, positive definite with $\|B\|\leq L$. Then, we have:
    \[  \|y^{k+1}-y^*\|^2\leq \|\bar y^{k}-y^*\|^2.               \]
\end{thm}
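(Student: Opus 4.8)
\textbf{Proof strategy for \cref{thm4-10}.}
This statement admits a much shorter argument than \cref{thm4-9}, and the plan is to reduce it to a one-line completion of a square. First, note that we may assume $\|r^k\|\neq 0$: if $\|r^k\|=0$ then $y^k=y^*$, hence $y^{k+1}=\bar y^k=y^*$ and the inequality is trivial. Then I would work, exactly as in the proof of \cref{thm4-9}, with the affine space $\cA^{k+1}=y^0+\spa\{y^1-y^0,\dots,y^{k+1}-y^0\}$ and with $y^{(k+1)}$, the Euclidean projection of $y^*$ onto $\cA^{k+1}$. The key first observation is that both $y^{k+1}$ and $\bar y^k$ lie in $\cA^{k+1}$: for $y^{k+1}$ this is immediate, and for $\bar y^k=y^k+L^{-1}r^k$ it follows from $y^k-y^0\in\cA^{k+1}-y^0$ and $r^k\in\spa\{p^0,\dots,p^k\}=\spa\{y^1-y^0,\dots,y^{k+1}-y^0\}=\cA^{k+1}-y^0$ (equivalently, from $\cA^{k+1}=y^0+\cK^{k+1}(B,r^0)$). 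Since $y^{(k+1)}$ is the orthogonal projection onto $\cA^{k+1}$, the Pythagorean identity $\|z-y^*\|^2=\|z-y^{(k+1)}\|^2+\|y^{(k+1)}-y^*\|^2$ holds for every $z\in\cA^{k+1}$; applying it to $z=y^{k+1}$ and to $z=\bar y^k$ and subtracting reduces the claim to showing $\|y^{k+1}-y^{(k+1)}\|^2\le\|\bar y^k-y^{(k+1)}\|^2$.

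The second step is to compute the two relevant vectors. By Property~(\rmnum{1}), $y^{k+1}-y^{(k+1)}=-\beta_kp^k$ where $\beta_k:=2\psi(y^{k+1})/\|r^k\|^2\ge 0$; and since $\bar y^k-y^{k+1}=L^{-1}r^k-a_kp^k$, adding $y^{k+1}-y^{(k+1)}$ yields $\bar y^k-y^{(k+1)}=L^{-1}r^k-\gamma_kp^k$ with $\gamma_k:=a_k+\beta_k$ (the same $\gamma_k$ as in \cref{thm4-9}). Expanding both squared norms, using $\langle r^k,p^k\rangle=\|r^k\|^2$ (Property~(\rmnum{3})) and $\gamma_k^2-\beta_k^2=a_k^2+2a_k\beta_k$, one arrives at
\begin{equation*}
  \|\bar y^k-y^{(k+1)}\|^2-\|y^{k+1}-y^{(k+1)}\|^2=\bigl[L^{-2}\|r^k\|^2-2a_kL^{-1}\|r^k\|^2+a_k^2\|p^k\|^2\bigr]+2\beta_k\bigl[a_k\|p^k\|^2-L^{-1}\|r^k\|^2\bigr].
\end{equation*}
It then suffices to check that both bracketed terms are nonnegative. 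For the second term, $\beta_k\ge 0$, and $\langle p^k,Bp^k\rangle\le\|B\|\|p^k\|^2\le L\|p^k\|^2$ combined with $a_k=\|r^k\|^2/\langle p^k,Bp^k\rangle$ gives $a_k\|p^k\|^2\ge L^{-1}\|r^k\|^2$. For the first term, Property~(\rmnum{4}) gives $\|p^k\|^2=\|r^k\|^4\sum_{j=0}^{k}\|r^j\|^{-2}\ge\|r^k\|^2$, hence $L^{-2}\|r^k\|^2-2a_kL^{-1}\|r^k\|^2+a_k^2\|p^k\|^2\ge\|r^k\|^2(L^{-1}-a_k)^2\ge 0$. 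This yields $\|\bar y^k-y^{(k+1)}\|^2\ge\|y^{k+1}-y^{(k+1)}\|^2$ and hence the theorem.

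The only parts that need a bit of care are the membership $\bar y^k\in\cA^{k+1}$ (which is what licenses the Pythagorean decomposition) and keeping careful track of the signs in the final expansion so that it collects into a perfect square plus a manifestly nonnegative cross term. There is no genuine obstacle here --- in particular, none of the delicate two-case analysis of \cref{thm4-9} is needed --- so the main ``work'' is simply the bookkeeping of the $\CG$ identities from \cref{prop:cg}.
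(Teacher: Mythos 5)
Your argument is correct and is essentially the paper's own proof: both reduce the claim via the Pythagorean decomposition through the projection $y^{(k+1)}$ to comparing $\|\bar y^k-y^{(k+1)}\|^2$ with $\|y^{k+1}-y^{(k+1)}\|^2$, and both close the gap with the same $\CG$ identities ($\langle p^k,r^k\rangle=\|r^k\|^2$, $\|p^k\|^2\ge\|r^k\|^2$, $a_k\ge L^{-1}$, nonnegativity of the Property~(\rmnum{1}) coefficient) to exhibit the difference as a perfect square plus a nonnegative multiple of $a_k\|p^k\|^2-L^{-1}\|r^k\|^2$. The only deviations are cosmetic: you expand the two squared norms directly whereas the paper writes the difference as $\|\bar y^k-y^{k+1}\|^2+2\langle\bar y^k-y^{k+1},y^{k+1}-y^{(k+1)}\rangle$, and you additionally spell out the membership $\bar y^k\in\cA^{k+1}$ and the degenerate case $r^k=0$, which the paper leaves implicit.
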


As discussed at the beginning of this section, the sequences $\{x_R^k\}_k$ and $\{y^k\}_k$ are equivalent up to a linear transformation, i.e., it holds that $y^k = B^\frac12 x_R^k$. This allows to transfer our obtained results back to the $\CR$ method. We summarize our observations for $\CR$ in the following corollary.
\begin{cor}
    \label{coro4-9}
    Let $B \in \R^{n \times n}$ be a symmetric, positive definite matrix with $\|B\|\leq L$ and let $x^0 \in \R^n$ be given. Suppose that $\{x^k_R\}_k$ is generated by the $\CR$ method to solve the linear system $B(x-x^0)=b$. Then, we have:
    \[        \vp(\bar x^k_R) \leq \vp(\tilde x^{k-1}_R) \quad \text{and} \quad \vp(x^k_R)\leq \vp(\bar x^{k-1}_R),                         \]
    where $\bar x^k_R$ and $\tilde x^k_R$ are defined in \eqref{defn_bar_x} and $\vp(x) := \frac12(x-x^0)^\top B(x-x^0)-b^\top (x-x^0)$.
\end{cor}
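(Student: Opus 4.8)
The plan is to deduce \cref{coro4-9} directly from \cref{thm4-9} and \cref{thm4-10}, which are already stated for the $\CG$ iterates $\{y^k\}_k$, by pushing those inequalities back to the $\CR$ iterates $\{x^k_R\}_k$ through the similarity transform $y = B^{\frac12}x$. From the discussion preceding \cref{algo3} we may use that $y^k = B^{\frac12}x^k_R$ and $y^* = B^{\frac12}x^*$ with $x^* = B^{-1}b + x^0$, and — crucially — that this transform intertwines the gradient-step maps, i.e. $\bar y^k = B^{\frac12}\bar x^k_R$ and $\tilde y^k = B^{\frac12}\tilde x^k_R$. The latter holds because $B(x^k_R - x^0) - b = B(x^k_R - x^*)$, so that $B^{\frac12}\bigl[B(x^k_R - x^0) - b\bigr] = B(y^k - y^*)$, and the same identity applied once more handles the second gradient step.

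The one computation I would record is the identity linking $\vp$ to the squared $B$-weighted distance to $x^*$. Writing $u = x-x^0$ and $u^* = B^{-1}b = x^*-x^0$, completing the square gives
\[
    \vp(x) = \tfrac12 u^\top B u - b^\top u = \tfrac12 (u - u^*)^\top B (u - u^*) - \tfrac12 b^\top B^{-1} b = \tfrac12\| B^{\frac12}x - y^*\|^2 - \tfrac12 b^\top B^{-1}b .
\]
Since $B$ is positive definite, the constant $\frac12 b^\top B^{-1}b$ is finite and, being independent of $x$, cancels in any comparison; hence for arbitrary $x,x'$ one has $\vp(x)\le\vp(x')$ if and only if $\| B^{\frac12}x - y^*\|^2 \le \| B^{\frac12}x' - y^*\|^2$. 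It then remains to instantiate this equivalence. For the first claim, take $x = \bar x^k_R$ and $x' = \tilde x^{k-1}_R$: by the intertwining property, $\vp(\bar x^k_R)\le\vp(\tilde x^{k-1}_R)$ is equivalent to $\|\bar y^k - y^*\|^2 \le \|\tilde y^{k-1} - y^*\|^2$, which is exactly \cref{thm4-9} after discarding the two nonnegative bracketed terms on its left-hand side (note $\nu = L/\|B\|\ge1$ since $\|B\|\le L$). For the second claim, take $x = x^k_R$ and $x' = \bar x^{k-1}_R$: the inequality $\vp(x^k_R)\le\vp(\bar x^{k-1}_R)$ is equivalent to $\|y^k - y^*\|^2 \le \|\bar y^{k-1} - y^*\|^2$, which is \cref{thm4-10} applied with $k$ replaced by $k-1$.

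I do not expect a genuine obstacle here — all the substance is already contained in \cref{thm4-9} and \cref{thm4-10}. The only points requiring care are the affine bookkeeping (working consistently with the shift by $x^0$, and verifying that $B^{\frac12}$ really does commute with the stepsize-$L^{-1}$ gradient maps on the $x$- and $y$-sides) and the remark that the $x$-independent term $\frac12 b^\top B^{-1}b$ drops out, so that comparing $\vp$-values is the same as comparing squared distances to $y^*$ after the transform.
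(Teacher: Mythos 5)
Your proposal is correct and follows essentially the same route the paper takes: the paper's discussion preceding \cref{thm4-9} already sets up the transform $y = B^{\frac12}x$, the intertwining $\bar y^k = B^{\frac12}\bar x^k_R$, $\tilde y^k = B^{\frac12}\tilde x^k_R$, and the equivalence between comparing $\vp$-values and comparing $(x-x^*)^\top B(x-x^*)$, so the corollary is obtained exactly as you describe by invoking \cref{thm4-9} (dropping its nonnegative extra terms, using $\nu\geq 1$) and \cref{thm4-10} with $k$ shifted to $k-1$. Your explicit completion of the square making the constant $\tfrac12 b^\top B^{-1}b$ visible is a slightly more detailed bookkeeping than the paper's, but the substance is identical.
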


\subsection{Proof of \cref{thm3-18}}
\label{aa_descent}
In this subsection, we combine our obtained results and show that $\AAr$ locally decreases the objective function {no slower} than a gradient descent step with stepsize $\frac{1}{L}$ (up to a certain higher-order error term). 

Throughout this section, we will work with the following choices $B=\nabla^2f(x^0)$, $b=-\nabla f(x^0)$, and $A=B+E_m$, where $E_m$ is defined in \eqref{eq:lets-define-A}.

\begin{proof}[Proof of \cref{thm3-18}] Clearly, \eqref{eq:super-duper} holds for $k = 0$. Furthermore, we only need to verify \eqref{eq:super-duper} for one cycle of {$\AAr$} as all assumptions and results will also hold for subsequent cycles, since all the subsequent iterations would also belong to $U$ by \cref{prop-local}. Let $U=S_{\epsilon}$ be the neighborhood defined in \cref{prop-local}. Then, for all $k\in \n$, we have:
 \[      \|h(x^{k+1})\|\leq (1-(2\kap)^{-1})\|h(x^k)\|.                 \]
\cref{prop-HXeq} establishes $\kappa(H_k^\top H_k)\leq M_H^2$ for some $M_H>0$ and by \cref{lemma3-5}, we can infer $\|\hat g^k-x^0\|=\mathcal O(\|b\|)$. Due to $x^{k+1}=\hat g^k$, this just means $\|x^{k+1}-x^0\|=\mathcal O(\|b\|)$. Notice that this estimate holds for every $k=0,1,\dots,m$ and therefore, it follows $M_x^m=\mathcal O(\|b\|)$. Furthermore, using \eqref{bound_Em}, we obtain $\|E_m\|=\mathcal O(\|b\|)$. Reducing $\epsilon$ if necessary, we may assume that $\|A-B\|=\|E_m\|<\mu$, which ensures the invertibility of $A$ as shown in the proof of \cref{lemma4-7}. Now, let $\{x^k_G\}_k$ and $\{x^k_R\}_k$ be generated by $\GMRES$ and $\CR$ applied to the linear systems $A(x-x^0)=b$ and $B(x-x^0)=b$ with $x^0_G=x^0_R=x^0$, respectively. By \cref{prop4-4}, we have $\bar x^k_G=x^{k+1}$ for all $k=0,\dots,m$ and $\kappa((\bar X_G^k)^\top \bar X_G^k) \leq M^2$ for all $k \in [m]$. Moreover, since the perturbed gradient mapping $\bar g$ is exact at each $x^k$, $k=0,\dots,m$, it holds that 
 $$g(x^k)=\bar g(x^k)=\bar g(\bar x^{k-1}_G)=\tilde x^{k-1}_G \quad \forall~k=1,\dots,m. $$
 In addition, we have $\|\bar x^k_G-x^0\| = \|x^{k+1}-x^0\|=\mathcal O(\|b\|)$. Reducing $\epsilon$ --- if necessary --- we may assume $\epsilon\leq \epsilon_\sharp$, where $\epsilon_\sharp$ was introduced in the proof of \cref{thm4-6}. Thus, all conditions in \cref{thm4-6} are satisfied and it follows
 \begin{align}
    \label{bound_gr}
    \|\bar x^k_G-\bar x^k_R\|=\mathcal O(\|b\|^2) \quad \text{and} \quad \|\tilde x^k_G-\tilde x^k_R\|=\mathcal O(\|b\|^2) \quad \forall~k=0,\dots,m.  
 \end{align}
Moreover, since $g$ is a contraction on $\B_r(\xopt)$ and due to $\|x^k-x^0\|=\mathcal O(\|b\|)$, we have $\|g(x^k)-x^0\|\leq \|g(x^k)-g(x^0)\|+\|g(x^0)-x^0\|=\mathcal O(\|b\|)$. Reusing the notation from \cref{coro4-9}, the Lipschitz continuity of the Hessian $\nabla^2 f$ then implies
\begin{equation}
    \label{ftovp}
    \begin{aligned}
        f(x^{k+1})&=f(x^0)+\vp(x^{k+1})+\mathcal O(\|x^{k+1}-x^0\|^3)=f(x^0)+\vp(\bar x^k_G)+\mathcal O(\|b\|^3), \\
        f(g(x^k))&=f(x^0)+\vp(g(x^k))+\mathcal O(\|g(x^k)-x^0\|^3)=f(x^0)+\vp(\tilde x^{k-1}_G)+\mathcal O(\|b\|^3),   
    \end{aligned}
\end{equation}
see, e.g., \cite[Lemma 4.1.1]{nesterov2018lectures}. Since the mapping $\vp$ is quadratic, we can further write:
    \begin{align*}
        \vp(\bar x^k_G)&=\vp(\bar x^k_R)+\nabla \vp(\bar x^k_R)^\top(\bar x^k_G-\bar x^k_R)+\tfrac{1}{2}(\bar x^k_G-\bar x^k_R)^\top B(\bar x^k_G-\bar x^k_R), \\
        \vp(\tilde x^{k-1}_G)&=\vp(\tilde x^{k-1}_R)+\nabla \vp(\tilde x^{k-1}_R)^\top(\tilde x^{k-1}_G-\tilde x^{k-1}_R)+\tfrac{1}{2}(\tilde x^{k-1}_G-\tilde x^{k-1}_R)^\top B(\tilde x^{k-1}_G-\tilde x^{k-1}_R). 
    \end{align*}
Next, applying \cref{lemma4-7} for the case $A=B$, it holds that:
\[    \|\nabla \vp(\bar x^k_R)\| = \|B(\bar x^k_R-x^0)-b\| \leq \|B(x^k_R-x^0)-b\|\leq    \|B(x^0-x^0)-b\|=\|b\|,                                 \]
where we used \cref{cr_property} in the last step. Similarly, we can show $\|\nabla \vp(\tilde x^{k-1}_R)\|\leq\|b\|$. Thus, combining \eqref{bound_gr} and the representations of $\vp(\bar x^k_G)$ and $\vp(\tilde x^{k-1}_G)$, we obtain
\begin{equation*}
    \begin{aligned}
        &  |\vp(\bar x^k_G)-\vp(\bar x^k_R)|\leq \| \nabla \vp(\bar x^k_R)\|\|\bar x^k_G-\bar x^k_R\|+\tfrac{L}{2}\|\bar x^k_G-\bar x^k_R\|^2=\mathcal O(\|b\|^3),  \\
        & |\vp(\tilde x^{k-1}_G)-\vp(\tilde x^{k-1}_R)|\leq \| \nabla \vp(\tilde x^{k-1}_R)\|\|\tilde x^{k-1}_G-\tilde x^{k-1}_R\|+\tfrac{L}{2}\|\tilde x^{k-1}_G-\tilde x^{k-1}_R\|^2=\mathcal O(\|b\|^3), 
    \end{aligned}
\end{equation*}
Using these estimates in \eqref{ftovp}, we can infer
\[        f(x^{k+1})=f(x^0)+\vp(\bar x^k_R)+\mathcal O(\|b\|^3), \quad f(g(x^k))=f(x^0)+\vp(\tilde x^{k-1}_R)+\mathcal O(\|b\|^3). \]
The conclusion then follows immediately from \cref{coro4-9}.
\end{proof}


\section{A Function Value-Based Globalization for $\AAr$}
\label{globalization}
Based on the local descent properties established in the last section, we now propose a globalization mechanism for $\AAr$. We prove global convergence and provide simple global-to-local transition results for the globalized $\AAr$ algorithm. To the best of our knowledge, this is the first function value-based globalization of $\AAr$ that achieves both global and local convergence. Previously, only heuristic strategies seem to be available, see \cite{peng2018anderson,ouyang2020anderson,scieur2017nonlinear}. 

The full procedure is presented in \cref{algo4}. Our core idea is to check whether the $\AAn$ step $x_{\AAn}^k$ satisfies a sufficient decrease condition
\begin{equation} \label{eq:descent-condition}  f(x_{\AAn}^k) \leq f(x^k)-\gamma\|\nabla f(x^k)\|^2+\min\{c_1\|\nabla f(x^{k-\hat m})\|^\nu,c_2\|\nabla f(x^{k-\hat m})\|^2,c_3\}, \end{equation}
where $\gamma$, $\nu$, and $c_i$, $i = 1,2,3$, are given parameters. We accept the $\AAn$ step as new iterate $x^{k+1} = x_{\AAn}^k$ if condition \eqref{eq:descent-condition} holds. Otherwise, a gradient step $x^{k+1} = g(x^k)$ is performed (which ensures decrease of the objective function values). We summarize several basic convergence properties of \cref{algo4} in the following theorem. 

\begin{algorithm}[t]
    \caption{A Globalized $\AAn$ Scheme with Restarting}
    \label{algo4}
    \begin{algorithmic}[1] 
        \STATE Choose an initial point $x^0 \in \Rn$, the memory parameter $m$, and constants $\gamma,\nu,c_1,c_2,c_3>0$. Set $\hat m=0$. 
        \FOR{$k=0,1,\dots$}
        \STATE Set $\hat m = \mathrm{mod}(k,m+1)$.
        \IF{$\hat m=0$ }
        \STATE Set $x^{k+1}=g(x^k)$.
        \ELSE
        \STATE Calculate the coefficient $\alpha^k$ based on the sequence $\{h(x^k),\dots,h(x^{k-\hat m})\}$ by solving \eqref{AAsubp} and set $x^{k}_{\AAn}=g^{k-\hat m}+G_k\alpha^k$.
        \IF{$f(x^{k}_{\AAn})> f(x^k)-\gamma\|\nabla f(x^k)\|^2+\min\{c_1\|\nabla f(x^{k-\hat m})\|^\nu,c_2\|\nabla f(x^{k-\hat m})\|^2,c_3\}$}
        \STATE Set $x^{k+1}=g(x^k)$.
        \ELSE 
        \STATE Set $x^{k+1}=x^{k}_{\AAn}$.
        \ENDIF
        \ENDIF
        \ENDFOR
    \end{algorithmic}
\end{algorithm}

\begin{thm}
    \label{thm4-1}
    Suppose that \ref{A1} holds and let $f$ be bounded from below. Let the sequence $\{x^k\}_k$ be generated by \cref{algo4} with $\gamma, c_1, c_3 > 0$, $c_2 < \frac{1}{2mL}$, and $\nu \in (2,3)$. Then, we have
    \[    {\sum}_{k=0}^\infty \|\nabla f(x^{k})\|^2<\infty \quad \text{and} \quad \lim_{k\to\infty}\|\nabla f(x^{k})\|=0.              \]
    In addition, if $\gamma < \frac{1}{2L}$ and if the conditions \ref{A2}--\ref{A5} are satisfied with $\xopt$ being an accumulation point of $\{x^{k(m+1)}\}_k$, then we have $x^k\to \xopt$ and all $\AAn$ steps will be eventually accepted, i.e., \cref{algo4} locally turns into \cref{algo1}.
\end{thm}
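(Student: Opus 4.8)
The plan is to split the argument into two parts matching the two claims. For the global part, I would first establish a uniform per-iteration decrease of $f$ along $\{x^k\}_k$. On a gradient step, $L$-smoothness gives the standard estimate $f(g(x^k)) \le f(x^k) - \frac{1}{2L}\|\nabla f(x^k)\|^2$. On an accepted $\AAn$ step, the acceptance test \eqref{eq:descent-condition} gives $f(x^{k+1}) \le f(x^k) - \gamma\|\nabla f(x^k)\|^2 + \min\{c_1\|\nabla f(x^{k-\hat m})\|^\nu, c_2\|\nabla f(x^{k-\hat m})\|^2, c_3\}$. The subtlety is the additive perturbation term: I would telescope over one full restarting cycle of length $m+1$. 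Within a cycle the first step is always a gradient step, contributing $-\frac{1}{2L}\|\nabla f(x^{k_0})\|^2$ where $k_0$ is the cycle start (and note $\hat m = 0$ at this index so the reference point $x^{k-\hat m}$ for all later steps in the cycle is $x^{k_0}$). Summing the (at most $m$) subsequent accepted-$\AAn$ or gradient steps, the accumulated perturbation is at most $m \cdot c_2\|\nabla f(x^{k_0})\|^2$ (using the $c_2\|\nabla f(x^{k-\hat m})\|^2$ branch of the min), and the condition $c_2 < \frac{1}{2mL}$ makes $-\frac{1}{2L} + m c_2 < 0$, so each cycle strictly decreases $f$ by at least $\bigl(\frac{1}{2L} - m c_2\bigr)\|\nabla f(x^{k_0})\|^2$ plus nonnegative contributions $\ge \frac{1}{2L}\|\nabla f(x^k)\|^2$ or $\gamma\|\nabla f(x^k)\|^2$ from the non-first steps. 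Since $f$ is bounded below, summing over all cycles yields $\sum_k \|\nabla f(x^k)\|^2 < \infty$ (the cycle-start terms control the in-cycle terms because those are bounded by the same quantity), hence $\|\nabla f(x^k)\| \to 0$.

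For the local part, let $\xopt$ be an accumulation point of $\{x^{k(m+1)}\}_k$ (the cycle starts) at which \ref{A2}--\ref{A5} hold. Since $\|\nabla f(x^k)\| \to 0$ and $f$ is decreasing, I would argue that once a cycle start $x^{k_0(m+1)}$ falls into a sufficiently small neighborhood $U$ of $\xopt$ — in particular inside the neighborhood from \cref{prop-local} and also inside the neighborhood where \cref{thm3-18} applies — the entire subsequent cycle stays in $U$ (by \cref{prop-local}, applied freshly at the cycle start since Algorithm 4 restarts there), and in fact all later iterates stay near $\xopt$. The key mechanism: near $\xopt$, strong convexity gives $\frac{\mu}{L}\|x-\xopt\| \le \|h(x)\| \le \|\nabla f(x)\|/L$, so $\|\nabla f(x^k)\| \to 0$ forces $x^k \to \xopt$ along any subsequence that enters $U$; combined with the accumulation-point hypothesis and the fact that cycle starts can be made arbitrarily close to $\xopt$ infinitely often, one propagates membership in $U$ forward cycle by cycle. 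Then $x^k \to \xopt$.

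Finally, I would show the $\AAn$ steps are eventually accepted. By \cref{thm3-18}, for $x^0$ (the cycle start) in $U$, $f(x^{k+1}_{\AAn}) \le f(g(x^k)) + \mathcal{O}(\|\nabla f(x^{k-\hat m})\|^3)$. Using $f(g(x^k)) \le f(x^k) - \frac{1}{2L}\|\nabla f(x^k)\|^2$ (this needs $\gamma < \frac{1}{2L}$ so that there is slack between the guaranteed gradient-step decrease and the required decrease in \eqref{eq:descent-condition}), we get $f(x^{k+1}_{\AAn}) \le f(x^k) - \frac{1}{2L}\|\nabla f(x^k)\|^2 + \mathcal{O}(\|\nabla f(x^{k-\hat m})\|^3)$. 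To verify the test \eqref{eq:descent-condition} it suffices that $\mathcal{O}(\|\nabla f(x^{k-\hat m})\|^3) + \gamma\|\nabla f(x^k)\|^2 - \frac{1}{2L}\|\nabla f(x^k)\|^2 \le \min\{c_1\|\nabla f(x^{k-\hat m})\|^\nu, c_2\|\nabla f(x^{k-\hat m})\|^2, c_3\}$. Since $\gamma < \frac{1}{2L}$ the $\|\nabla f(x^k)\|^2$ terms have a favorable (negative) net sign and can be dropped from the upper bound, so it remains to check $C\|\nabla f(x^{k-\hat m})\|^3 \le c_1\|\nabla f(x^{k-\hat m})\|^\nu$ for the relevant constant $C$; because $\nu < 3$, this holds for all $k$ with $\|\nabla f(x^{k-\hat m})\|$ small enough, which is guaranteed once $\|\nabla f(x^k)\| \to 0$. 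Hence for large $k$ every $\AAn$ step satisfies the acceptance condition and Algorithm 4 reduces to \cref{algo1}.

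The main obstacle I anticipate is the bookkeeping in the global part: because the perturbation term in \eqref{eq:descent-condition} references $x^{k-\hat m}$ (the current cycle's start) rather than $x^k$, the telescoping must be organized cycle-by-cycle rather than step-by-step, and one must carefully check that the total perturbation injected in a cycle is dominated by the guaranteed decrease of that cycle's initial gradient step — this is exactly where the hypothesis $c_2 < \frac{1}{2mL}$ is used and where an off-by-one in counting the $m$ in-cycle steps would break the estimate.
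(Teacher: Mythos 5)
Your proposal is correct and follows essentially the same route as the paper: the global part telescopes the sufficient-decrease inequalities cycle-by-cycle, using the initial gradient step of each cycle together with $c_2 < \frac{1}{2mL}$ to absorb the accumulated perturbation terms, and the local part combines \cref{thm3-18}, the slack $\frac{1}{2L}-\gamma>0$, and $\nu<3$ to show the cubic error term is eventually dominated by $c_1\|\nabla f(x^{k-\hat m})\|^{\nu}$, so that once a cycle start enters the neighborhood of \cref{prop-local} all subsequent $\AAn$ steps are accepted by induction. This matches the paper's argument in both structure and in where each hypothesis is used.
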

\begin{proof}
    Notice that the $k$-th cycle starts at iteration $(k-1)(m+1)$ and ends at iteration $k(m+1)$ (with $x^{k(m+1)}$ serving as initial point for the next cycle). In order to keep the notation simple, we concentrate on the first cycle. Since the first iteration within each cycle is a gradient descent step, i.e., $x^1 = g(x^0)$, we can deduce $f(x^1)\leq f(x^0)-\frac{1}{2L}\|\nabla f(x^0)\|^2$. For all $k = 1,\dots,m$, the iterate $x^{k+1}$ either results from a gradient descent step or an $\AAn$ step satisfying the acceptance criterion:
    \[  f(x^{k+1})\leq f(x^k)-\gamma\|\nabla f(x^k)\|^2+c_2\|\nabla f(x^0)\|^2. \]
    Hence, each update $k = 1,\dots, m$ satisfies $f(x^{k+1}) \leq f(x^k) - \min\{\frac{1}{2L},\gamma\}\|\nabla f(x^k)\|^2 + c_2\|\nabla f(x^0)\|^2$.
    Summing these estimates from $1$ to $m$, we obtain
    \[   f(x^{m+1})\leq f(x^0) - \min\left\{\tfrac{1}{2L},\gamma\right\}{\sum}_{k=1}^m\|\nabla f(x^k)\|^2 -\left[\tfrac{1}{2L}-mc_2\right] \|\nabla f(x^0)\|^2.               \]
    Defining $\sigma := \min\{\frac{1}{2L},\gamma,\frac{1}{2L}-mc_2\} > 0$, this result holds for every cycle of \cref{algo4}, i.e., we have
    \[  f(x^{(k+1)(m+1)})\leq f(x^{k(m+1)})-\sigma {\sum}_{i = k(m+1)}^{k(m+1)+m} \|\nabla f(x^{i})\|^2 \quad \forall~k \in \n.    \]
    Summing this inequality for all $k\in\n$ and noticing that $f$ is bounded from below, it follows $\sum_{i=0}^\infty \|\nabla f(x^{i})\|^2<\infty$ which readily implies $\|\nabla f(x^{i})\|\to0$. Next, let $\xopt$ be an accumulation point of $\{x^{k(m+1)}\}_k$ satisfying \ref{A2}--\ref{A5}. By \cref{thm3-18}, there is a neighborhood $U$ of $\xopt$ such that if $y^0\in U$, then the sequence $\{y^k\}_k$ generated by \cref{algo1} satisfies
    \[      f(y^{k+1})\leq f(g(y^k))+\mathcal O(\|\nabla f(y^{k-\hat m})\|^3)\leq f(y^k)-\tfrac{1}{2L}\|\nabla f(y^k)\|^2+\mathcal O(\|\nabla f(y^{k-\hat m})\|^3).                  \]
    Thus, by shrinking $U$ if necessary and using $\gamma < \frac{1}{2L}$, we can assume 
    \begin{equation} \label{eq:yeah}  f(y^{k+1})\leq f(y^k)-\gamma \|\nabla f(y^k)\|^2+c_1\|\nabla f(y^{k-\hat m})\|^{\nu} \quad \forall~k.                  \end{equation}
    Since $\xopt$ is an accumulation point of $\{x^{k(m+1)}\}_k$, there exists $s$ with $x^{s(m+1)}\in U$. We now set $y^k := x^{k+s(m+1)}$. Due to $y^0 \in U$, $\|\nabla f(x^i)\| \to 0$, and since the conditions \ref{A1}--\ref{A5} are satisfied, we can inductively infer that every $\AAn$ step fulfills \eqref{eq:yeah} and is accepted as new iterate, i.e., we have $y^{k+1} = x^{s(m+1)+k+1} = x^{s(m+1)+k}_{\AAn}$, $k \geq 1$. Convergence of $\{x^k\}_k$ then follows from \cref{prop-local} and \ref{A2}.
\end{proof}


\section{Numerical Experiments}
\label{sec:exp}
In this section, we conduct preliminary numerical experiments to illustrate the performance and convergence behavior of $\AAr$ and to empirically verify the descent properties of Algorithm~\ref{algo4}\footnote{Code available under \url{https://github.com/yangliu-op/AndersonAcceleration}}.

\begin{figure}[t]
	\setlength{\abovecaptionskip}{-3pt plus 3pt minus 0pt}
	\setlength{\belowcaptionskip}{-10pt plus 3pt minus 0pt}
	\centering
	\includegraphics[width=13.0cm]{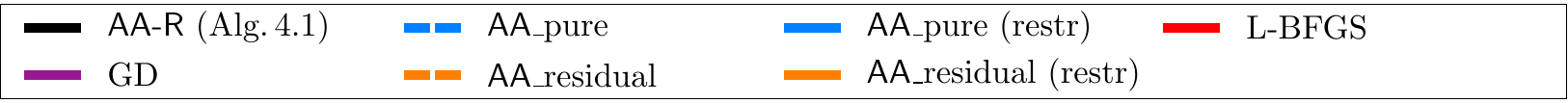}
	\hspace*{-1.2ex}
	\begin{tikzpicture}[scale=1]
	\node[right] at (0.0,0) {\includegraphics[width=3.35cm,trim=15 50 60 45,clip]{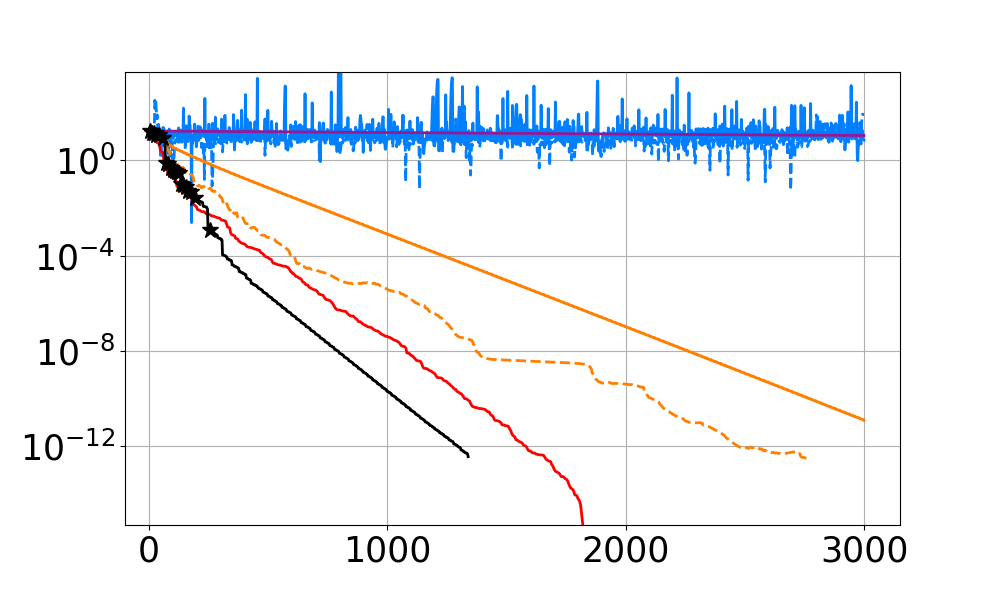}};
	\node[right] at (3.45,0) {\includegraphics[width=3cm,trim=85 50 60 45,clip]{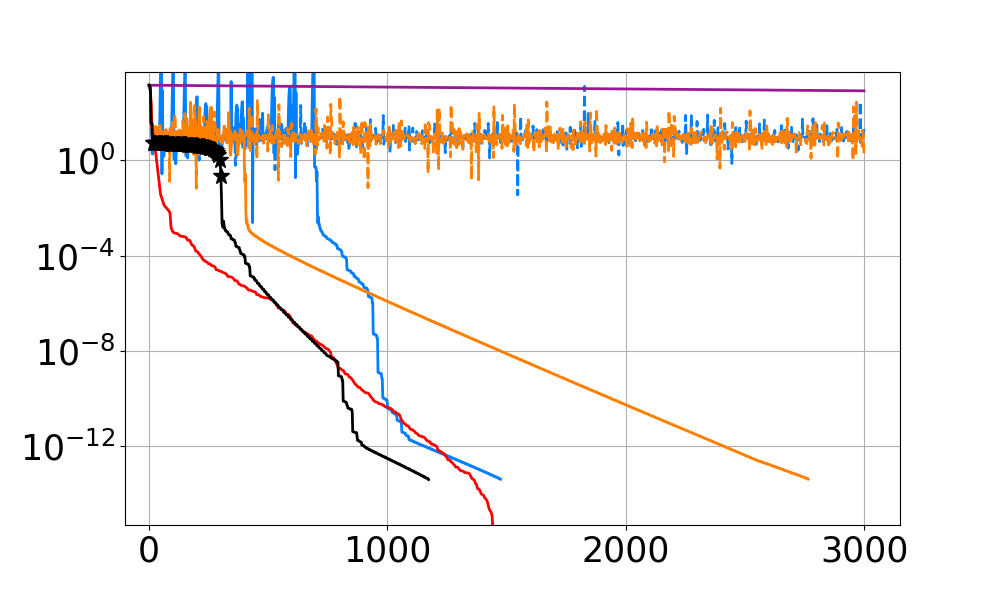}};
	\node[right] at (6.55,0) {\includegraphics[width=3cm,trim=85 50 60 45,clip]{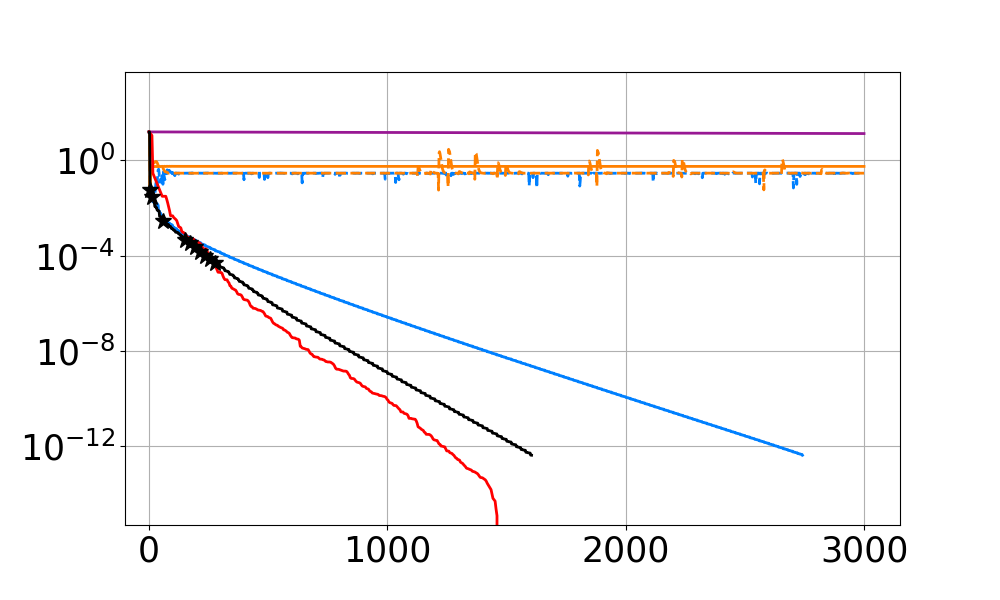}};
	\node[right] at (9.65,0) {\includegraphics[width=3cm,trim=85 50 60 45,clip]{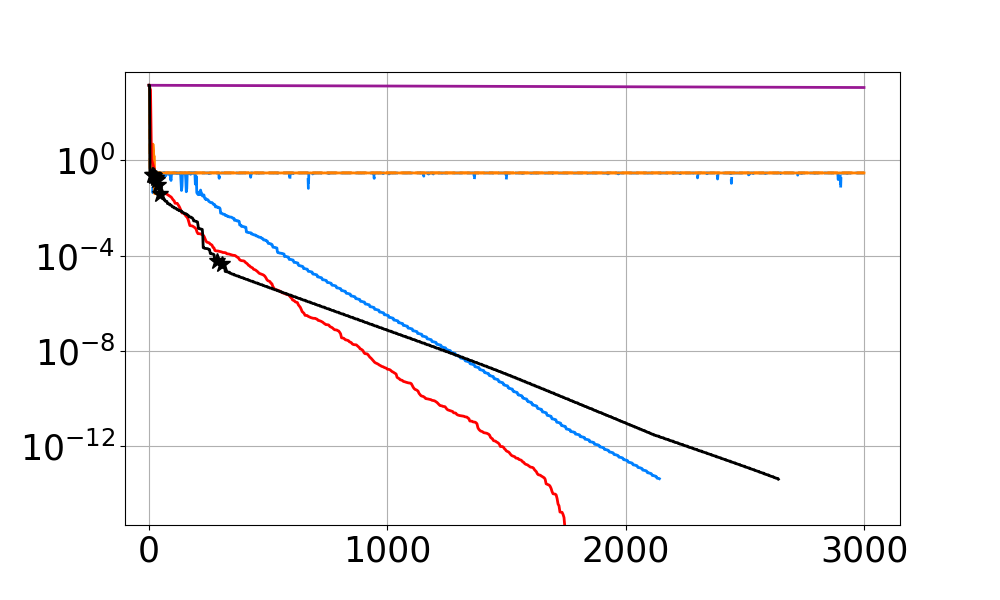}};
	\node[right] at (0.0,-1.9) {\includegraphics[width=3.35cm,trim=15 50 60 45,clip]{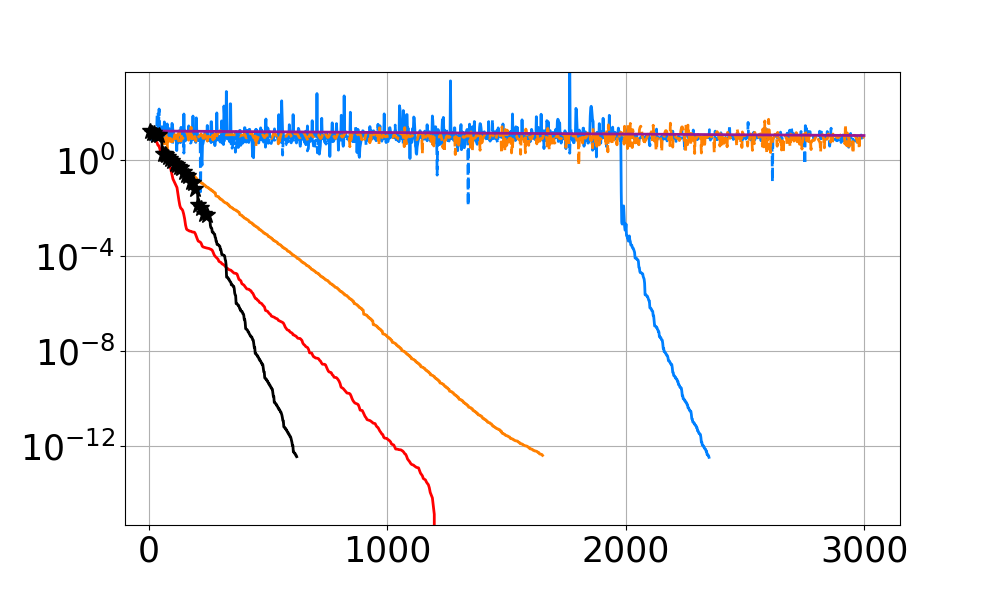}};
	\node[right] at (3.45,-1.9) {\includegraphics[width=3cm,trim=85 50 60 45,clip]{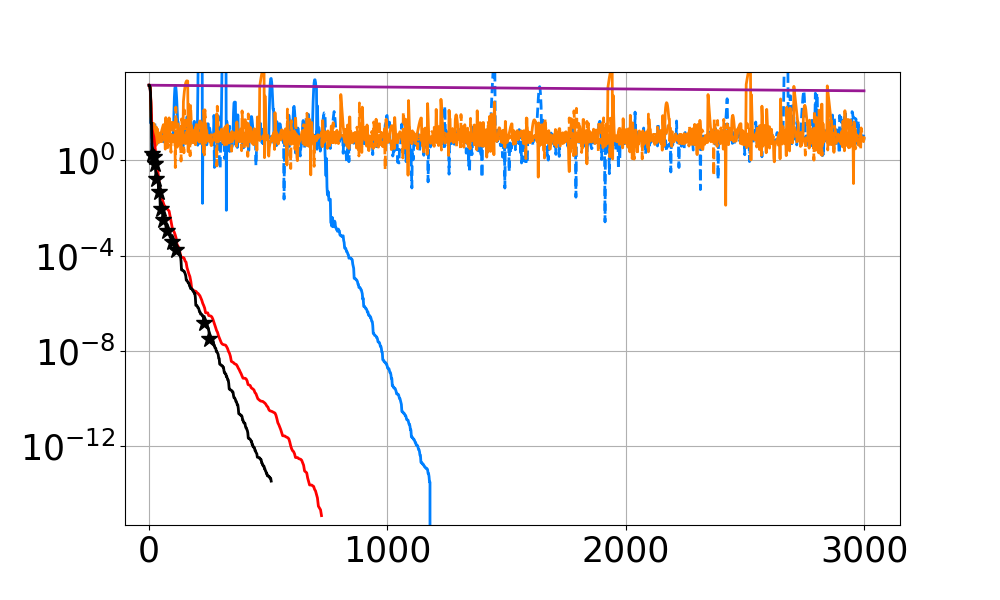}};
	\node[right] at (6.55,-1.9) {\includegraphics[width=3cm,trim=85 50 60 45,clip]{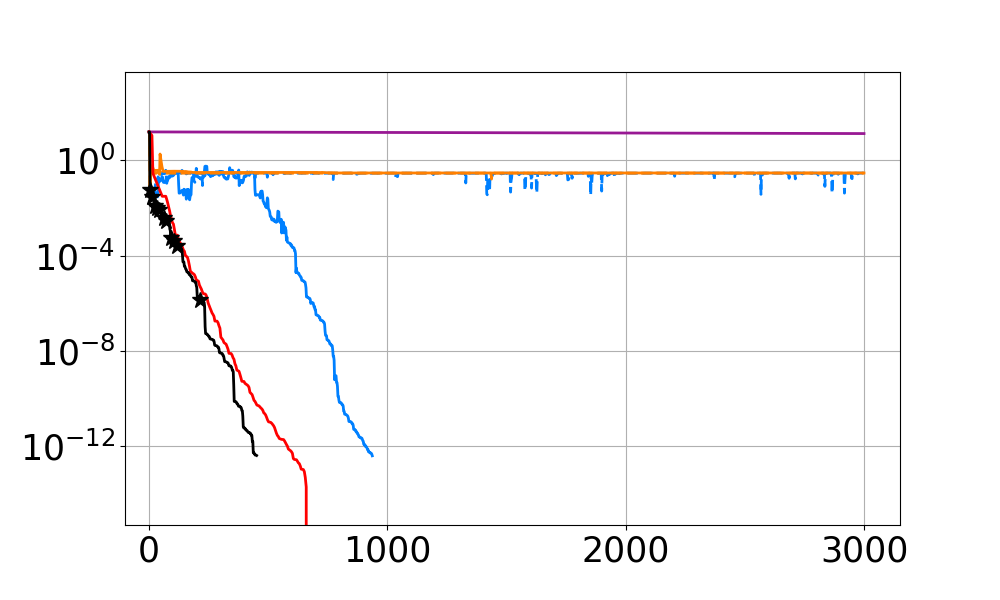}};
	\node[right] at (9.65,-1.9) {\includegraphics[width=3cm,trim=85 50 60 45,clip]{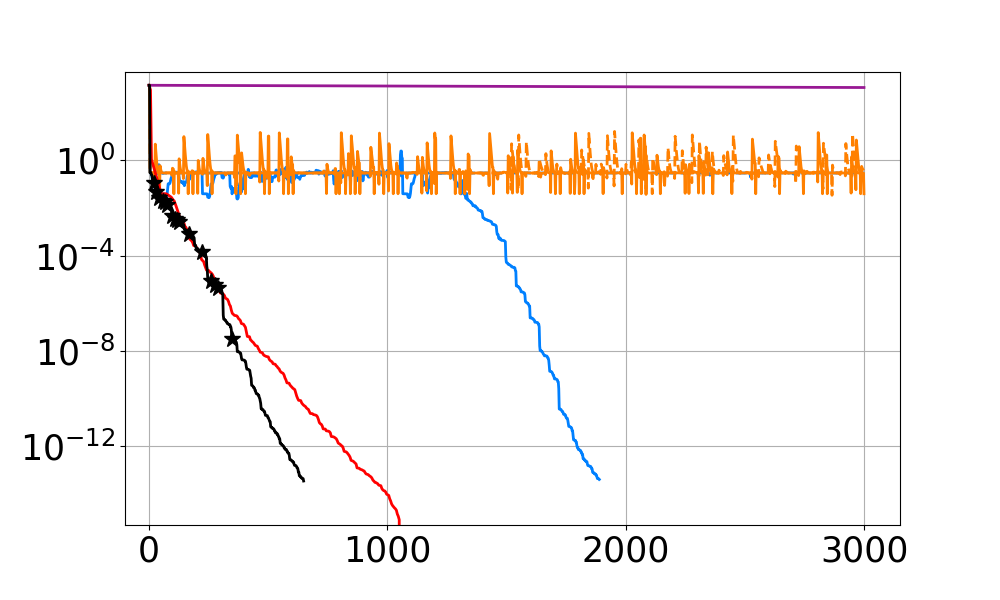}};
	\node[right] at (0.0,-3.8) {\includegraphics[width=3.35cm,trim=15 50 60 45,clip]{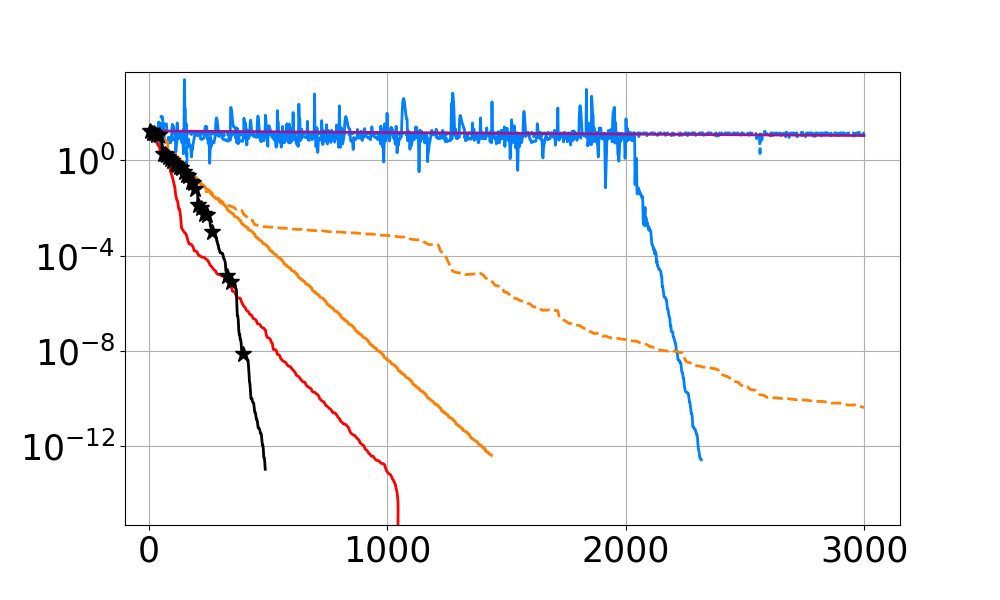}};
	\node[right] at (3.45,-3.8) {\includegraphics[width=3cm,trim=85 50 60 45,clip]{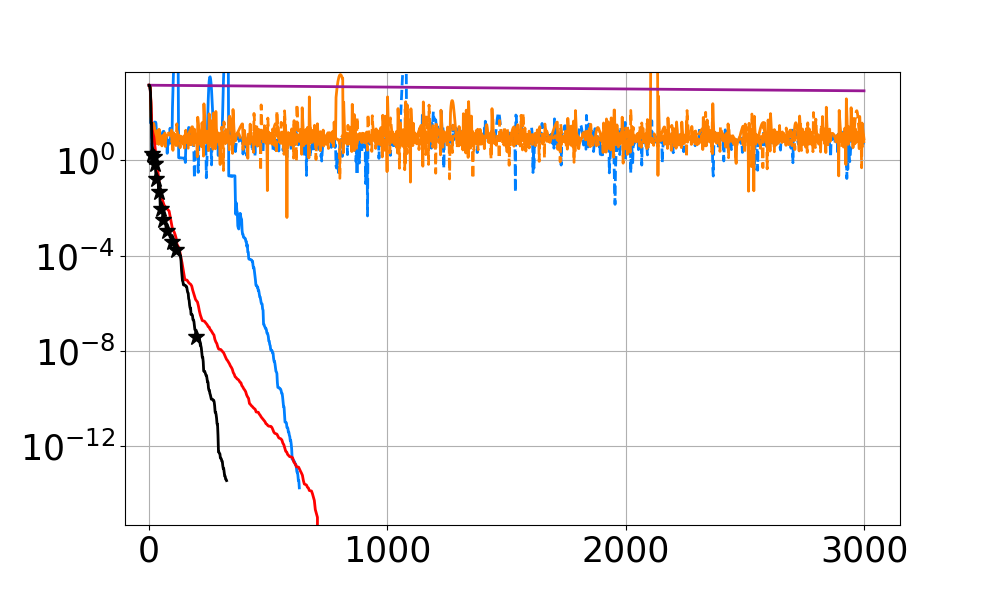}};
	\node[right] at (6.55,-3.8) {\includegraphics[width=3cm,trim=85 50 60 45,clip]{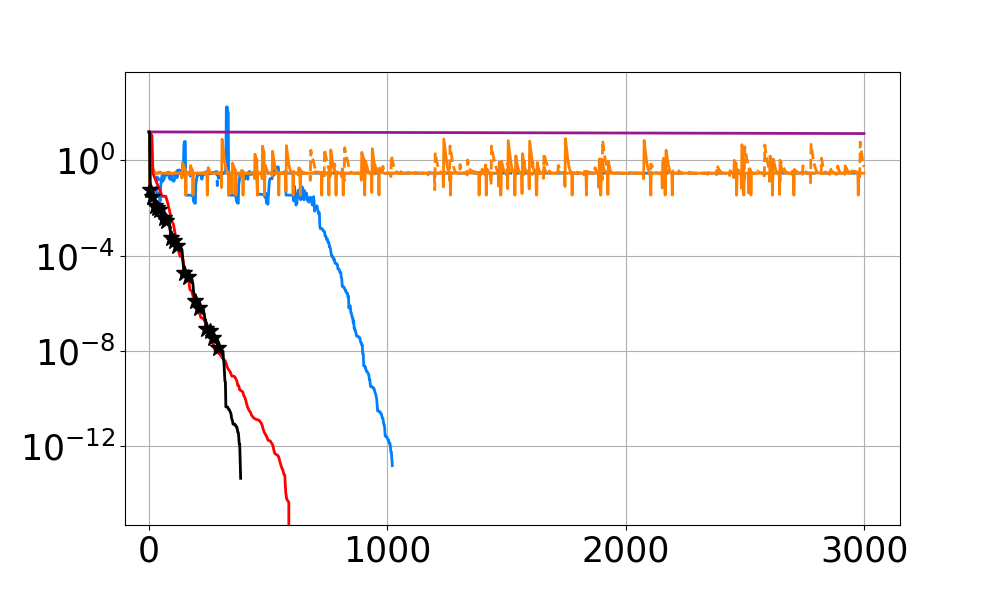}};
	\node[right] at (9.65,-3.8) {\includegraphics[width=3cm,trim=85 50 60 45,clip]{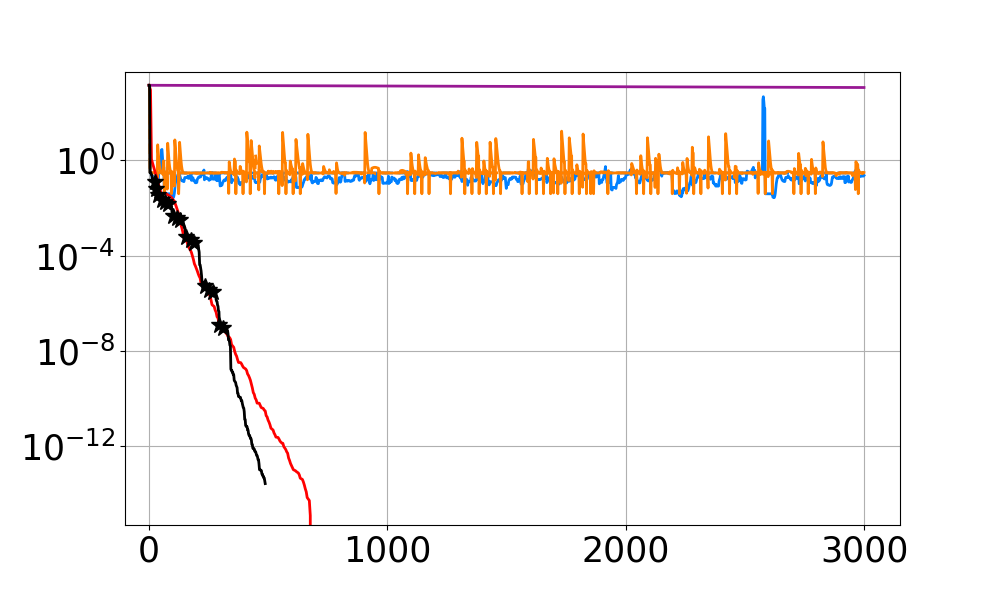}};
	\node[right] at (0.0,-5.7) {\includegraphics[width=3.35cm,trim=15 50 60 45,clip]{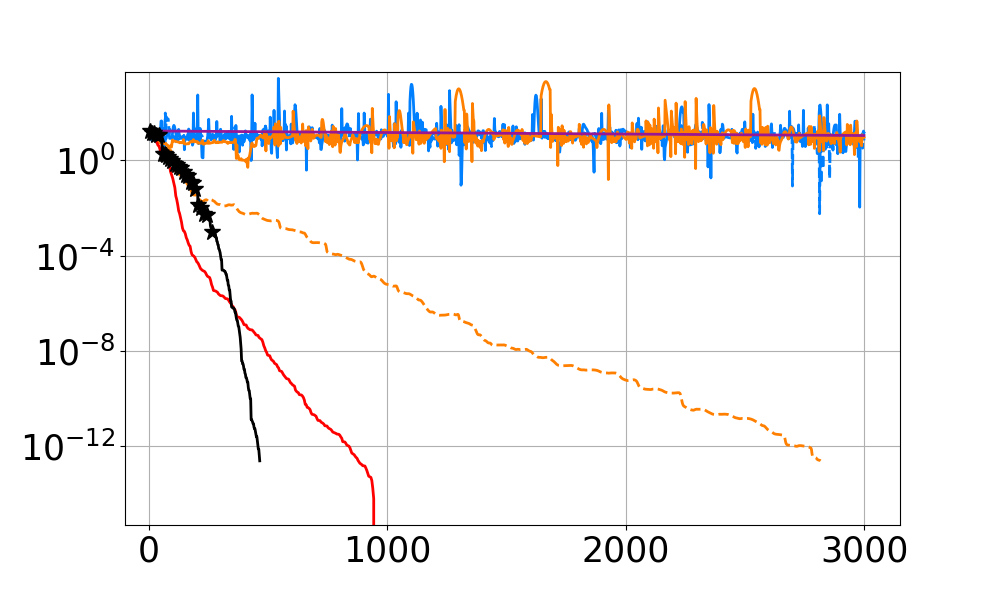}};
	\node[right] at (3.45,-5.7) {\includegraphics[width=3cm,trim=85 50 60 45,clip]{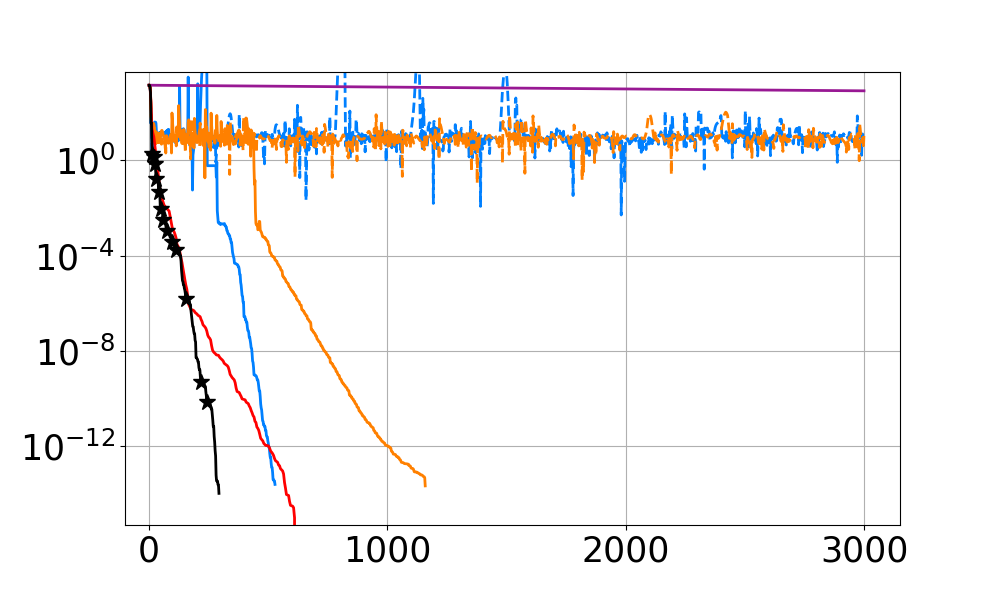}};
	\node[right] at (6.55,-5.7) {\includegraphics[width=3cm,trim=85 50 60 45,clip]{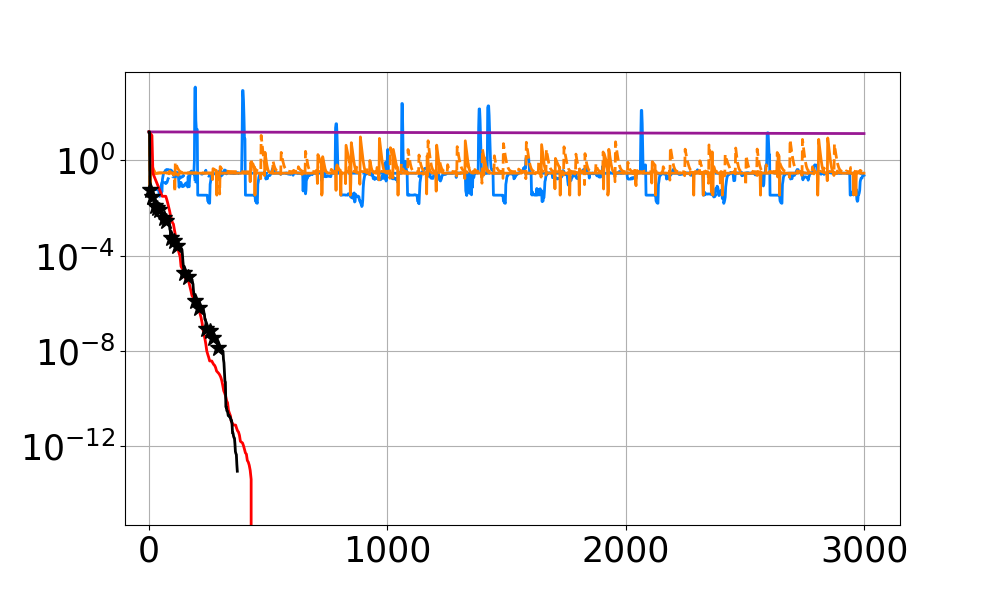}};
	\node[right] at (9.65,-5.7) {\includegraphics[width=3cm,trim=85 50 60 45,clip]{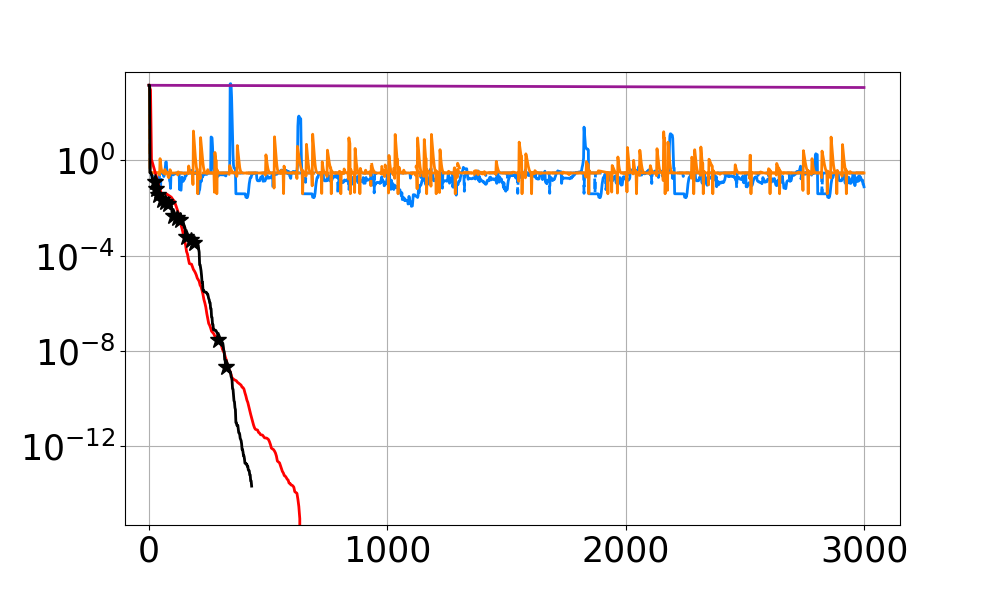}};
	\node[right] at (0.0,-7.7) {\includegraphics[width=3.35cm,trim=15 20 60 45,clip]{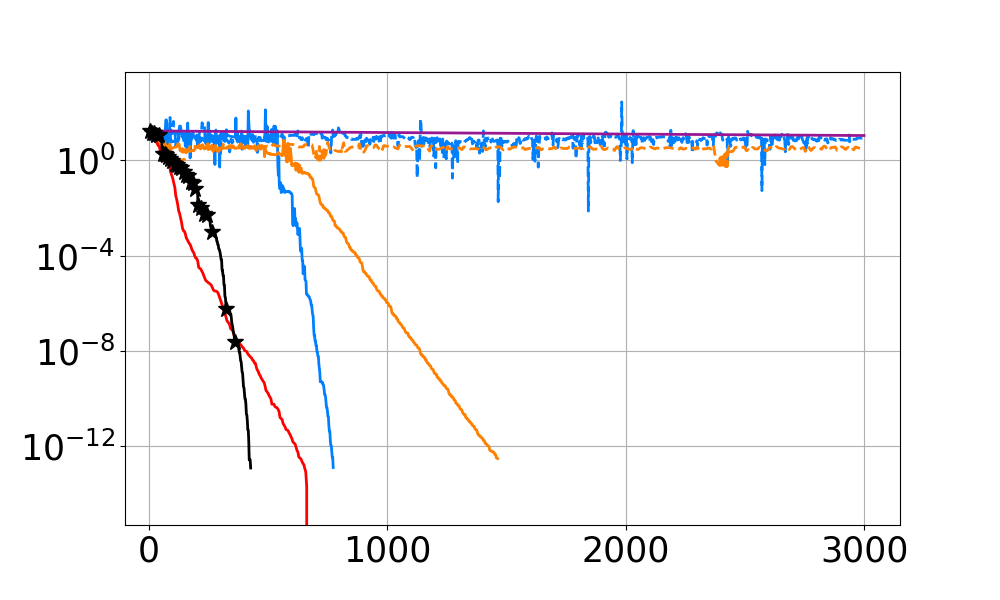}};
	\node[right] at (3.45,-7.7) {\includegraphics[width=3cm,trim=85 20 60 45,clip]{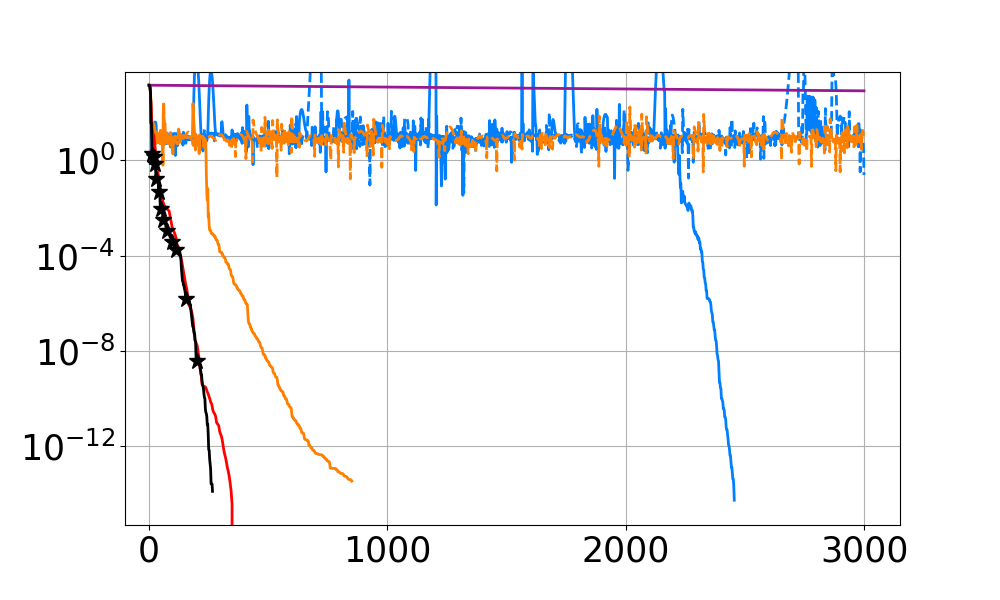}};
	\node[right] at (6.55,-7.7) {\includegraphics[width=3cm,trim=85 20 60 45,clip]{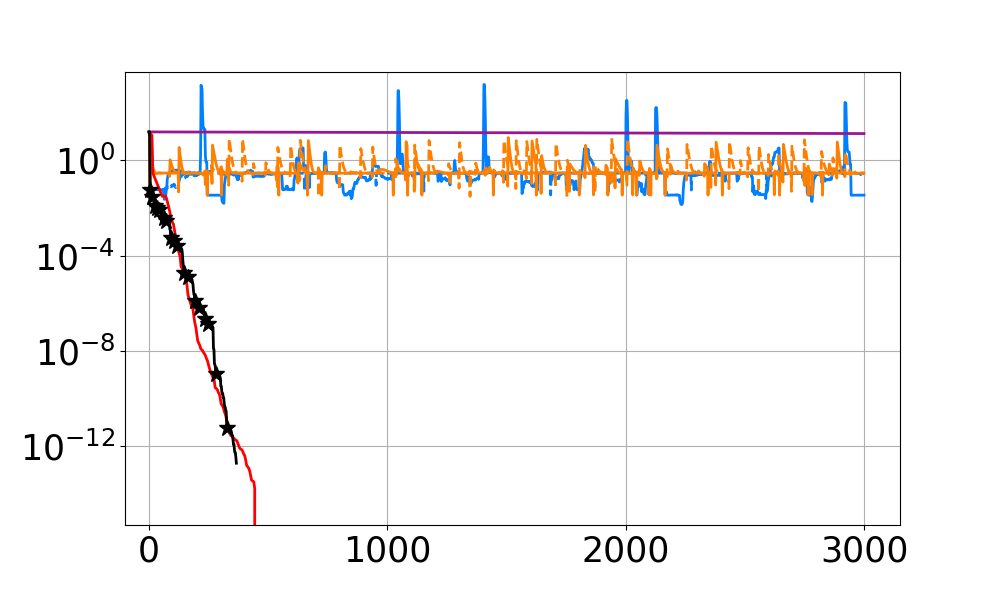}};
	\node[right] at (9.65,-7.7) {\includegraphics[width=3cm,trim=85 20 60 45,clip]{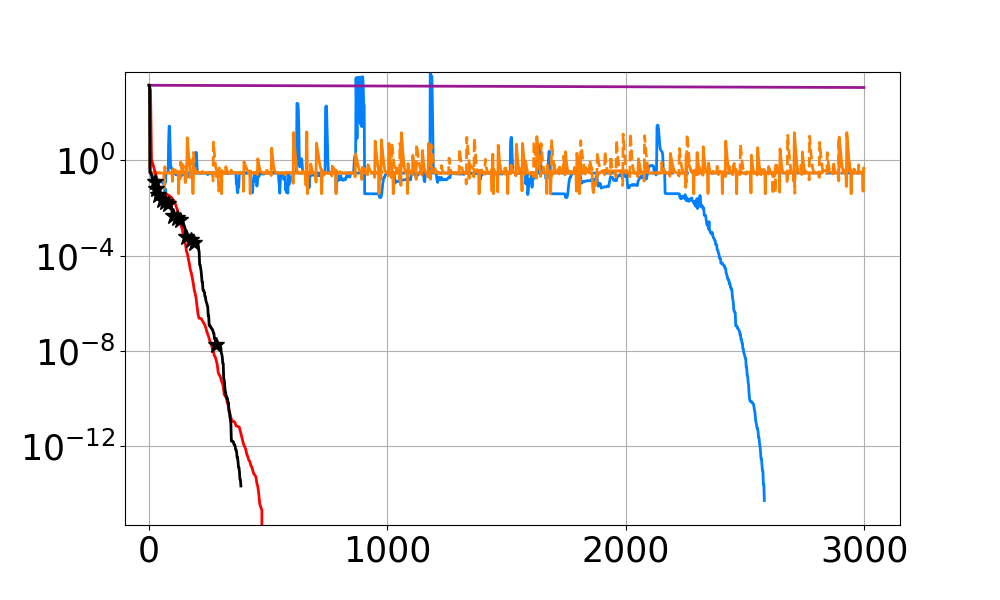}};
	\node[right] at (0.3,-9.0) {{\footnotesize(a)~ST \& \texttt{CIFAR10}}};
	\node[right] at (3.75,-9.0) {{\footnotesize(b)~ST \& \texttt{STL10}}};
	\node[right] at (6.85,-9.0) {{\footnotesize(c)~NLS \& \texttt{CIFAR10}}};
	\node[right] at (9.95,-9.0) {{\footnotesize(d)~NLS \& \texttt{STL10}}};
	\node at (12.97,0) {\rotatebox{-90}{{\tiny $m=5$}}};
	\node at (12.97,-1.9) {\rotatebox{-90}{{\tiny $m=10$}}};
	\node at (12.97,-3.8) {\rotatebox{-90}{{\tiny $m=15$}}};
	\node at (12.97,-5.7) {\rotatebox{-90}{{\tiny $m=20$}}};
	\node at (12.97,-7.7) {\rotatebox{-90}{{\tiny $m=30$}}};
\end{tikzpicture}
\vspace{-1ex}
	\caption{Relative error $ (f(x^k) - f^{*}) / \max \{f^{*}, 1\} $ vs.\ {Oracle calls} for the student's $t$ (ST) and nonlinear least-squares (NLS) problem and the datasets \texttt{CIFAR10} and \texttt{STL10}. The plots in each column are generated using the identical initial point $x^0 \sim  \mathcal{N}^{d}(0,1) $. In each row, the different $\AAn$ methods and \texttt{L-BFGS} are executed using the same memory parameter $ m \in \{5,10,15,20,30\}$. The $x$-axes of each plot have the same scaling $0$\,--\,$3,000$ (as shown in the bottom row). \label{fig:f}}
\end{figure}

\begin{figure}[t]
	\setlength{\abovecaptionskip}{-3pt plus 3pt minus 0pt}
	\setlength{\belowcaptionskip}{-10pt plus 3pt minus 0pt}
	\centering
	\includegraphics[width=13.0cm]{plot/legend_03.pdf}
		\hspace*{-1.2ex}
	\begin{tikzpicture}[scale=1]
	\node[right] at (0.0,0) {\includegraphics[width=3.35cm,trim=15 50 60 45,clip]{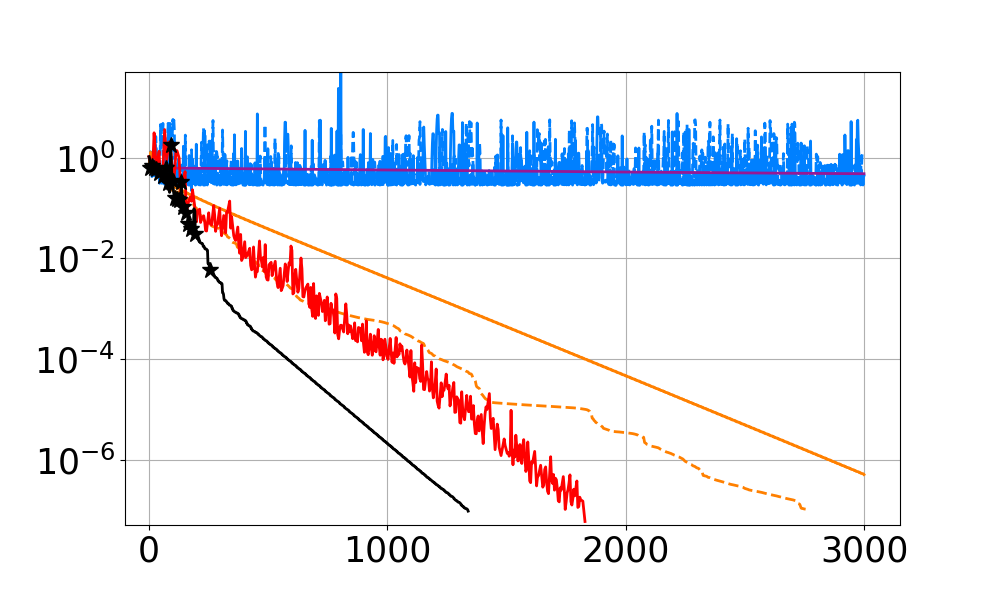}};
	\node[right] at (3.45,0) {\includegraphics[width=3cm,trim=85 50 60 45,clip]{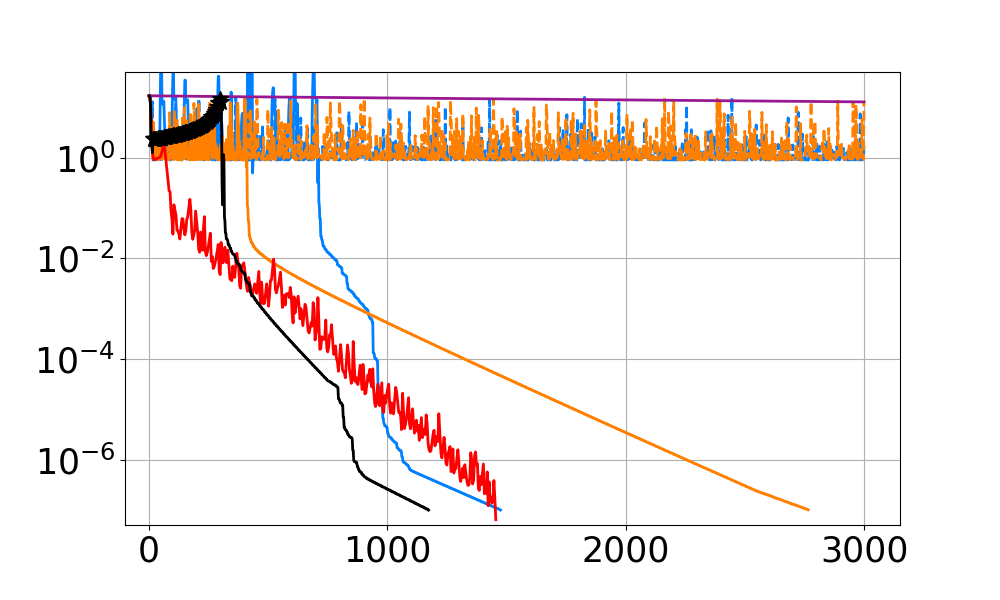}};
	\node[right] at (6.55,0) {\includegraphics[width=3cm,trim=85 50 60 45,clip]{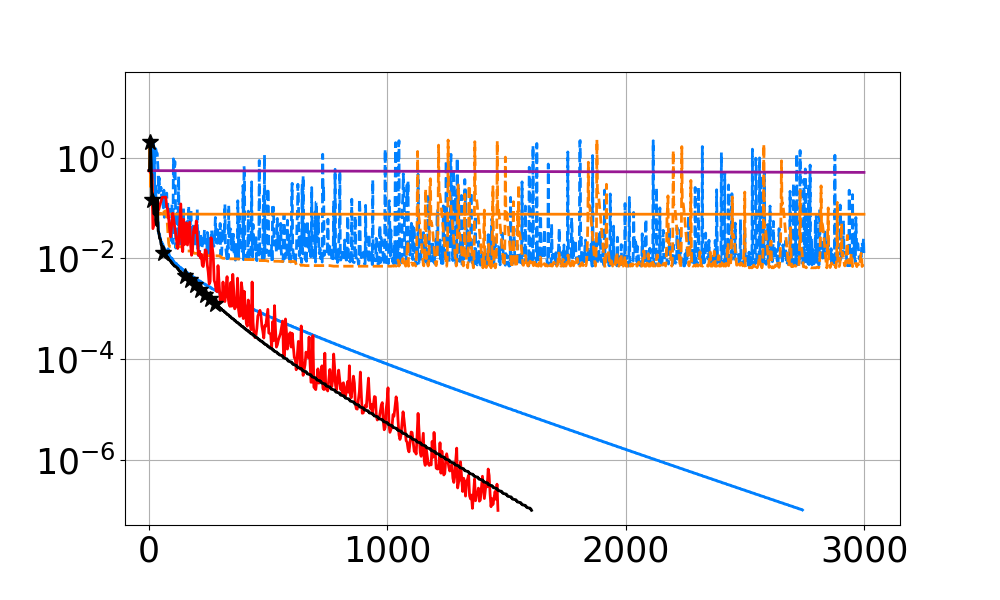}};
	\node[right] at (9.65,0) {\includegraphics[width=3cm,trim=85 50 60 45,clip]{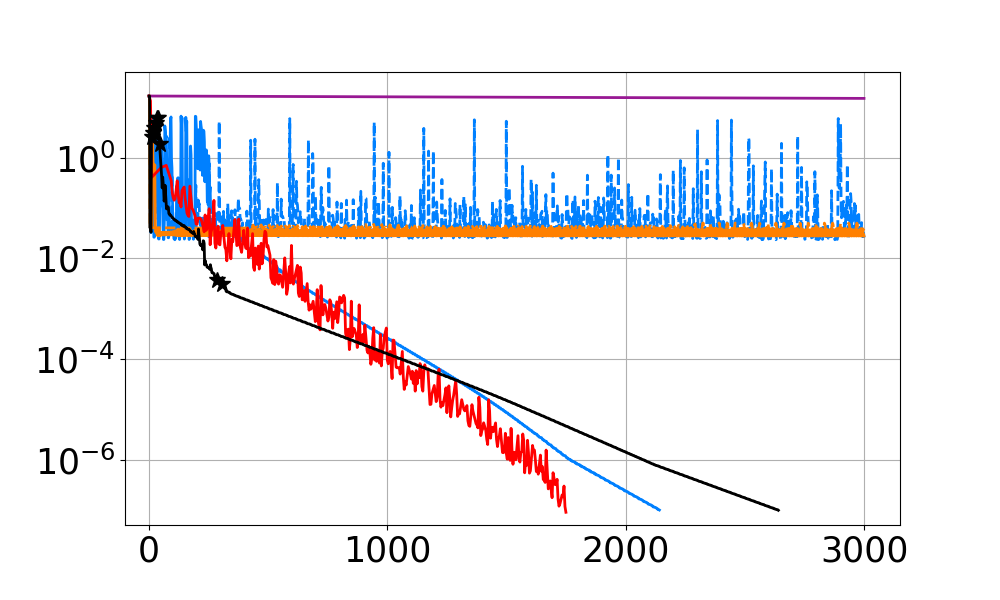}};
	\node[right] at (0.0,-1.9) {\includegraphics[width=3.35cm,trim=15 50 60 45,clip]{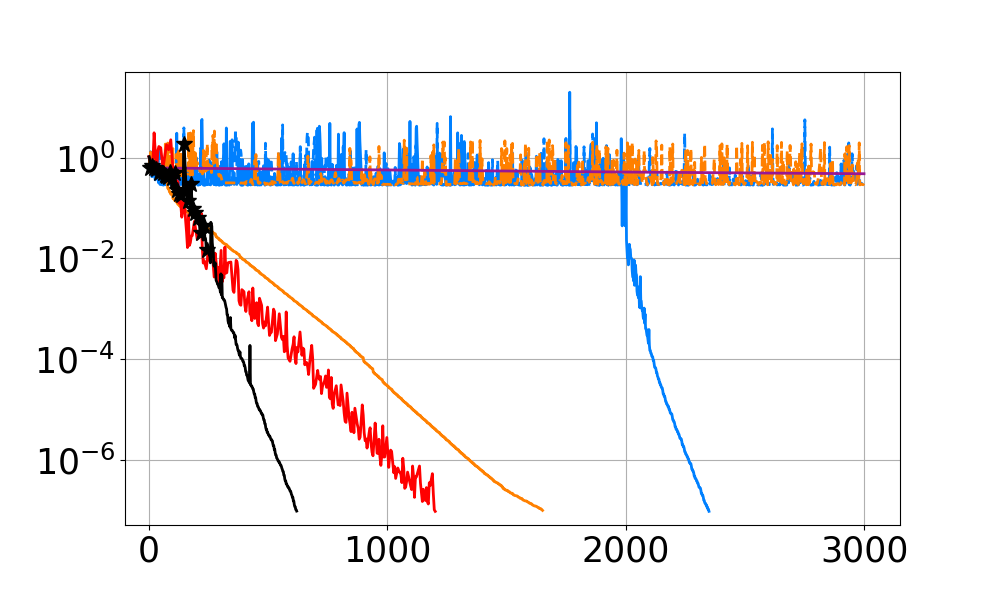}};
	\node[right] at (3.45,-1.9) {\includegraphics[width=3cm,trim=85 50 60 45,clip]{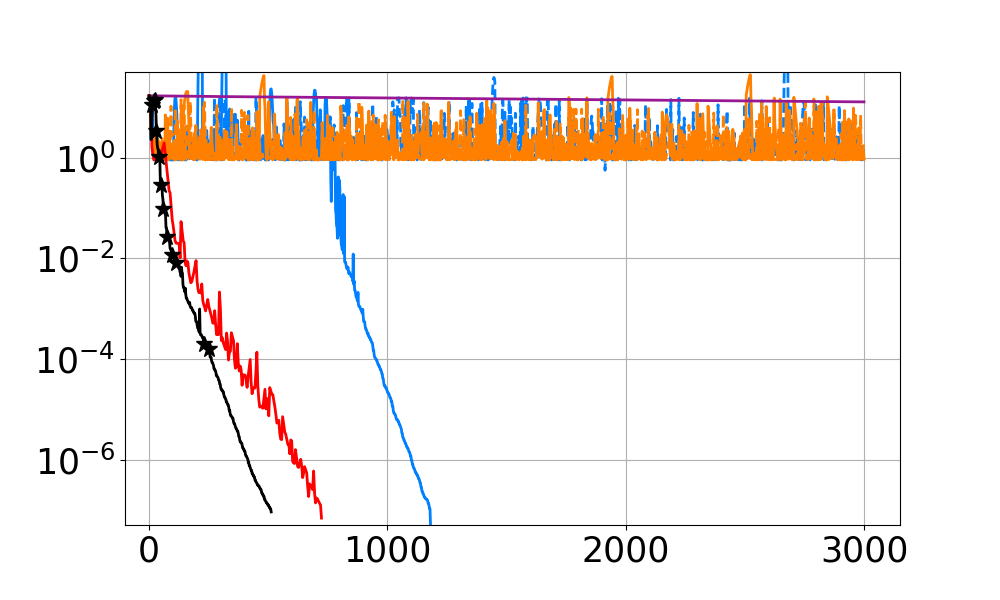}};
	\node[right] at (6.55,-1.9) {\includegraphics[width=3cm,trim=85 50 60 45,clip]{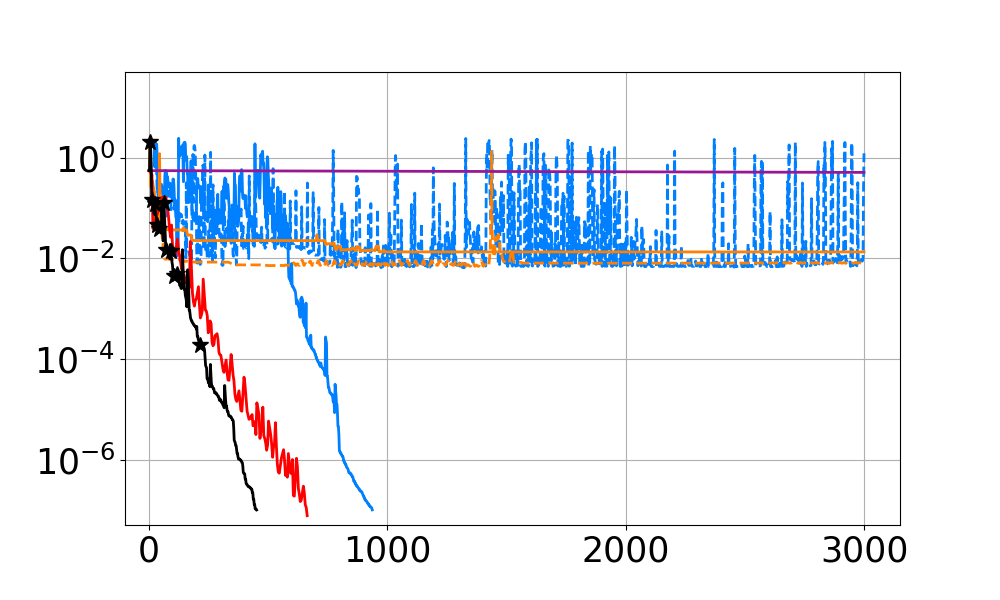}};
	\node[right] at (9.65,-1.9) {\includegraphics[width=3cm,trim=85 50 60 45,clip]{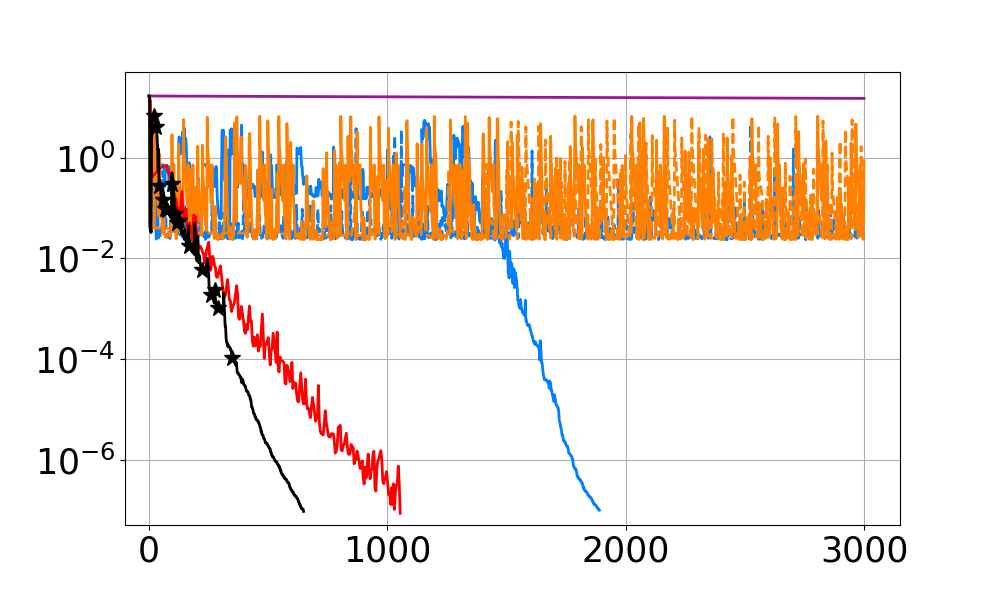}};
	\node[right] at (0.0,-3.8) {\includegraphics[width=3.35cm,trim=15 50 60 45,clip]{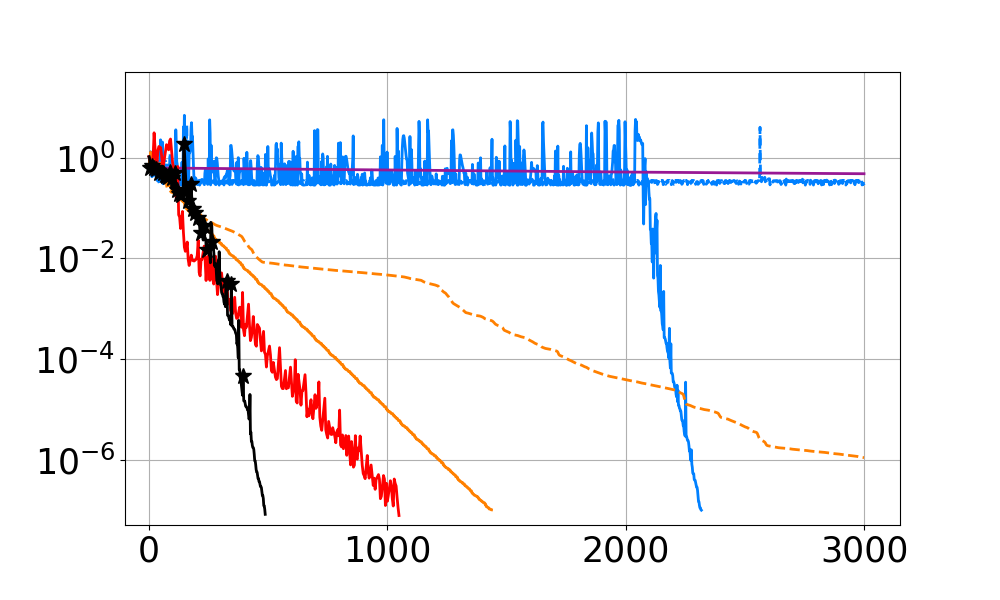}};
	\node[right] at (3.45,-3.8) {\includegraphics[width=3cm,trim=85 50 60 45,clip]{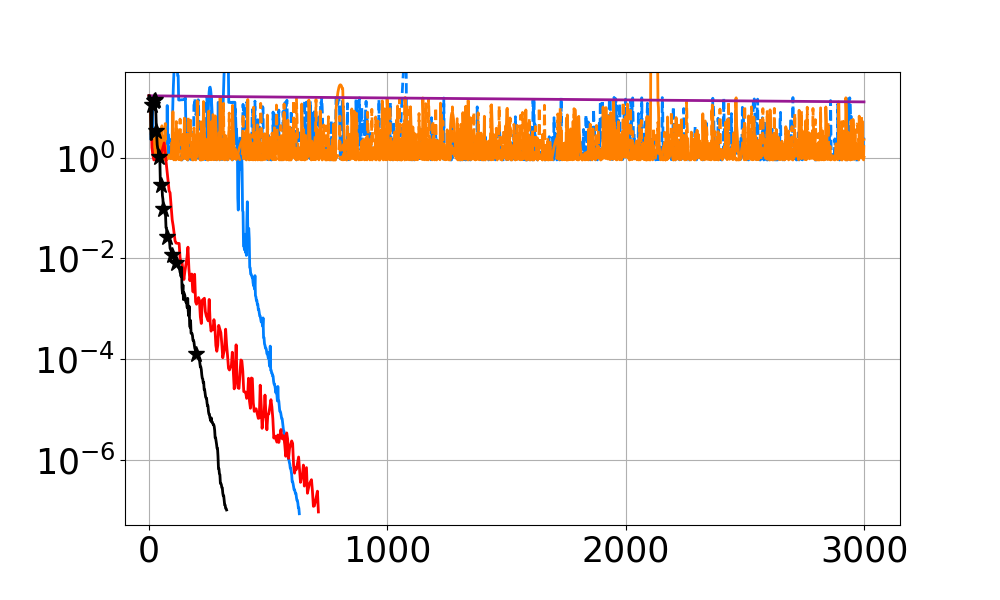}};
	\node[right] at (6.55,-3.8) {\includegraphics[width=3cm,trim=85 50 60 45,clip]{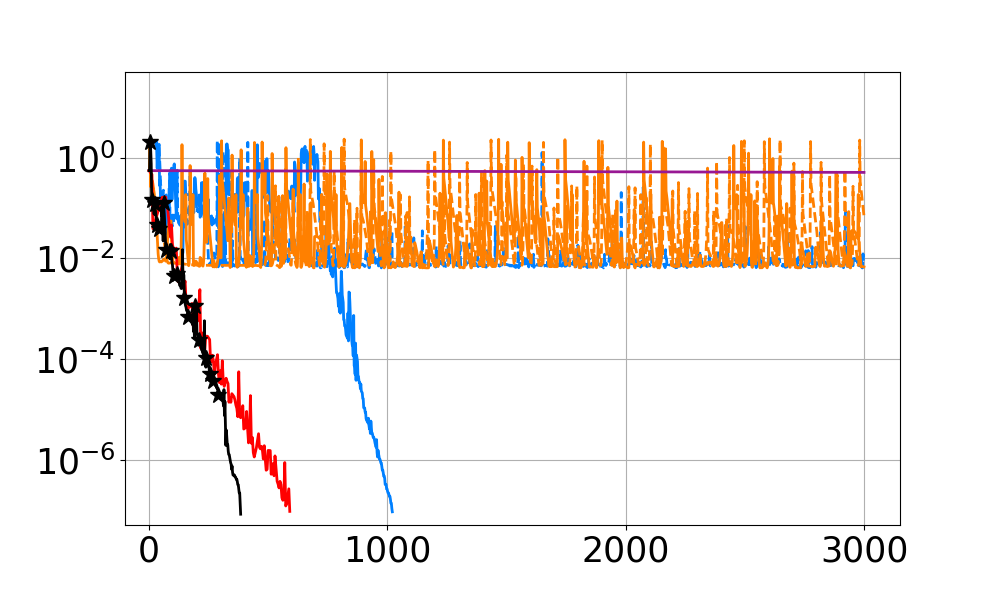}};
	\node[right] at (9.65,-3.8) {\includegraphics[width=3cm,trim=85 50 60 45,clip]{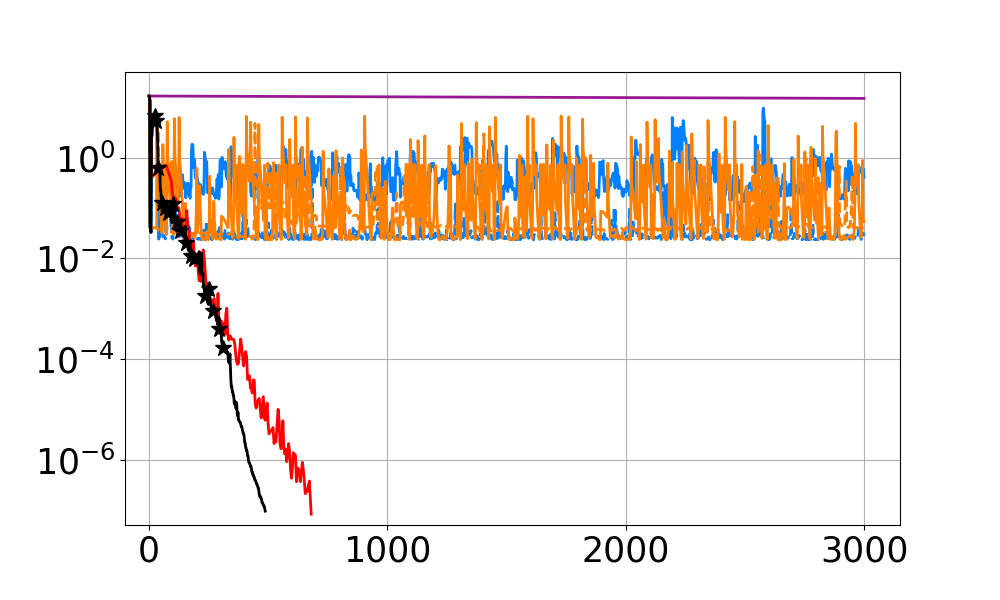}};
	\node[right] at (0.0,-5.7) {\includegraphics[width=3.35cm,trim=15 50 60 45,clip]{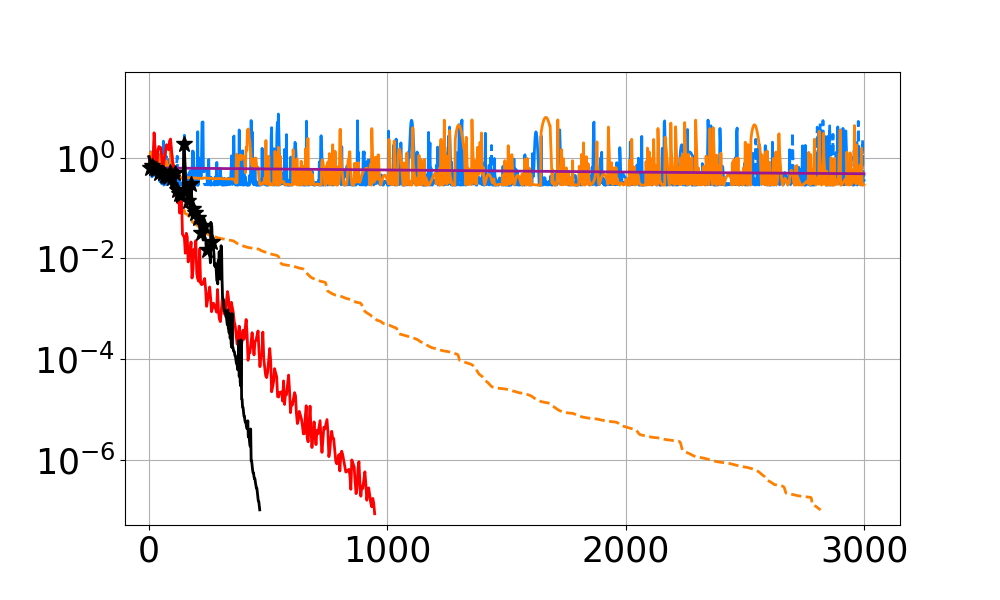}};
	\node[right] at (3.45,-5.7) {\includegraphics[width=3cm,trim=85 50 60 45,clip]{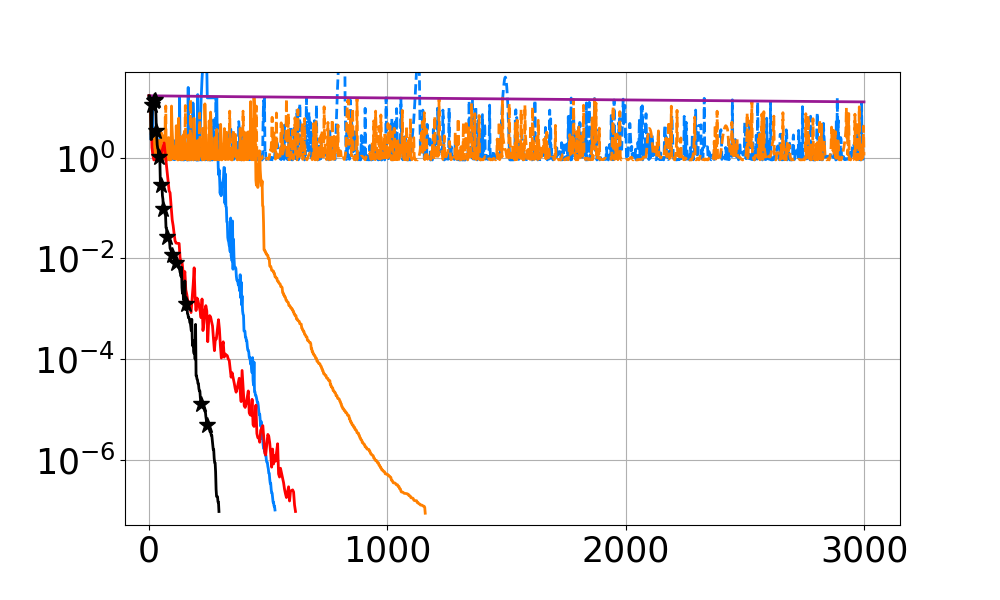}};
	\node[right] at (6.55,-5.7) {\includegraphics[width=3cm,trim=85 50 60 45,clip]{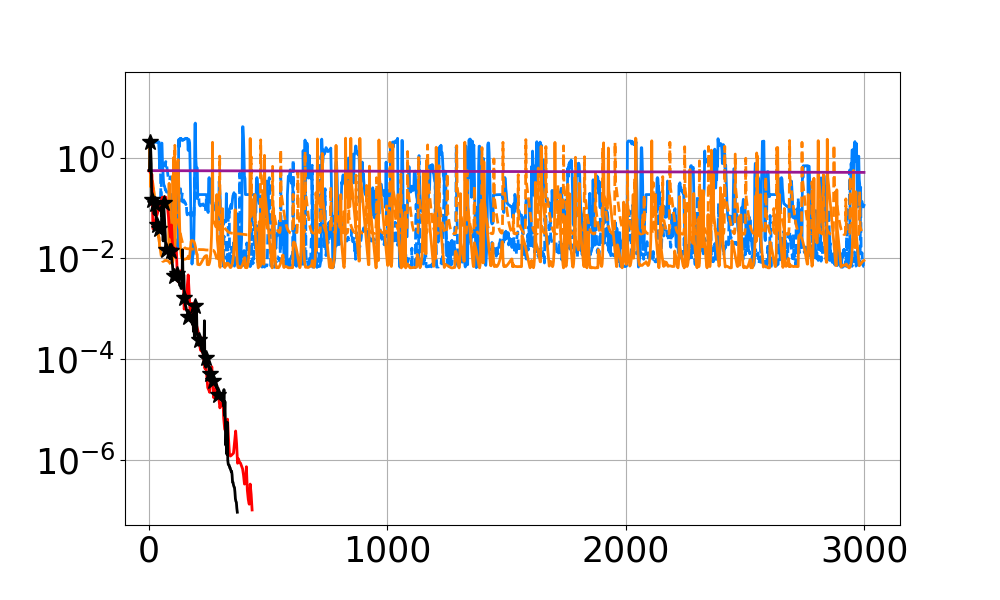}};
	\node[right] at (9.65,-5.7) {\includegraphics[width=3cm,trim=85 50 60 45,clip]{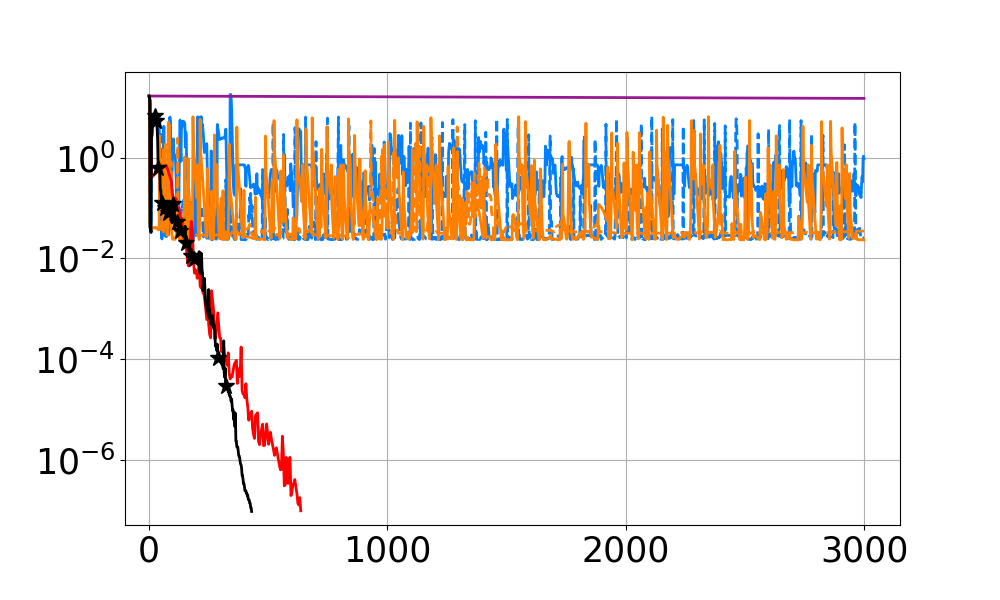}};
	\node[right] at (0.0,-7.7) {\includegraphics[width=3.35cm,trim=15 20 60 45,clip]{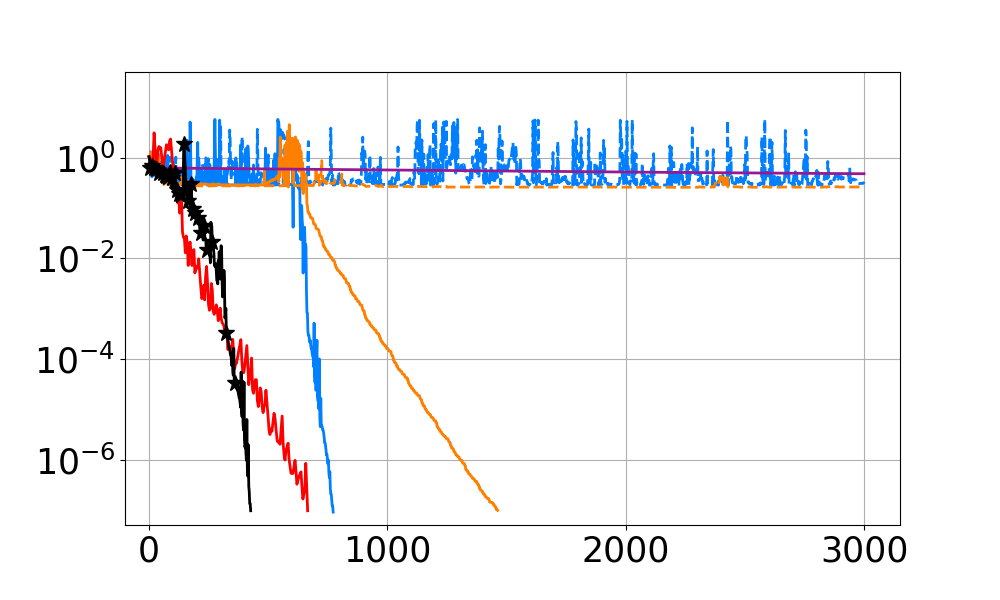}};
	\node[right] at (3.45,-7.7) {\includegraphics[width=3cm,trim=85 20 60 45,clip]{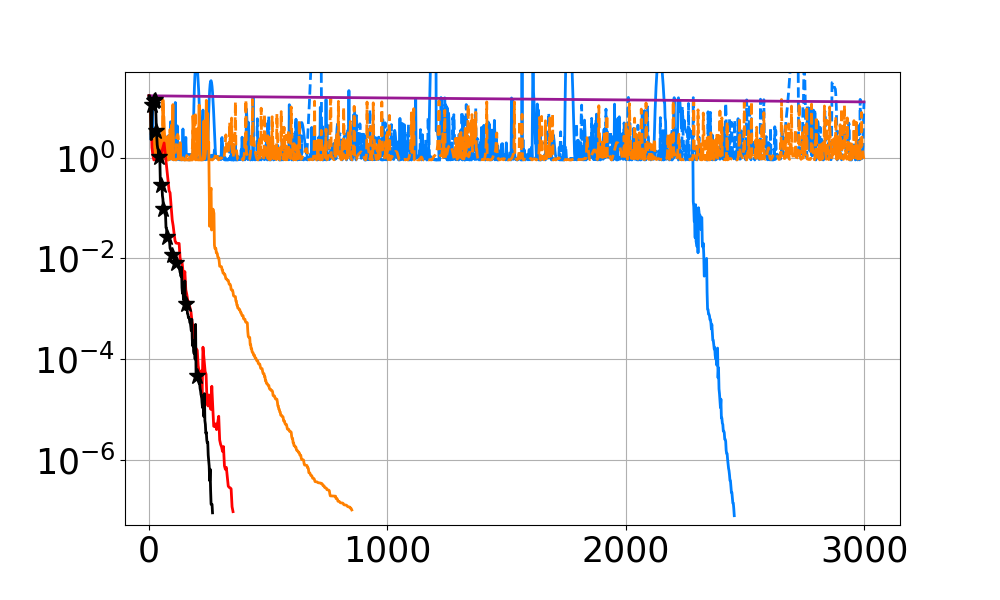}};
	\node[right] at (6.55,-7.7) {\includegraphics[width=3cm,trim=85 20 60 45,clip]{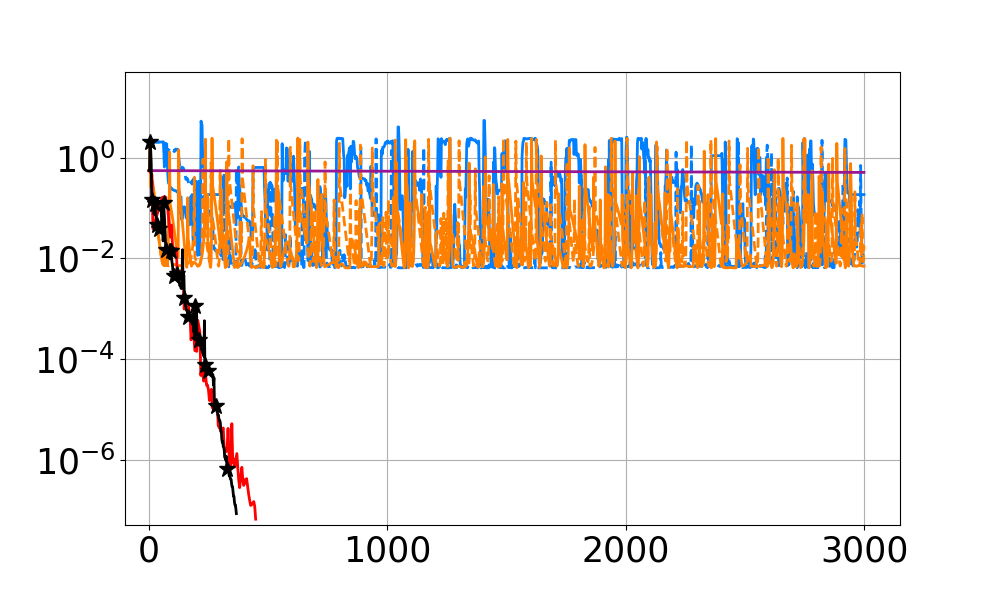}};
	\node[right] at (9.65,-7.7) {\includegraphics[width=3cm,trim=85 20 60 45,clip]{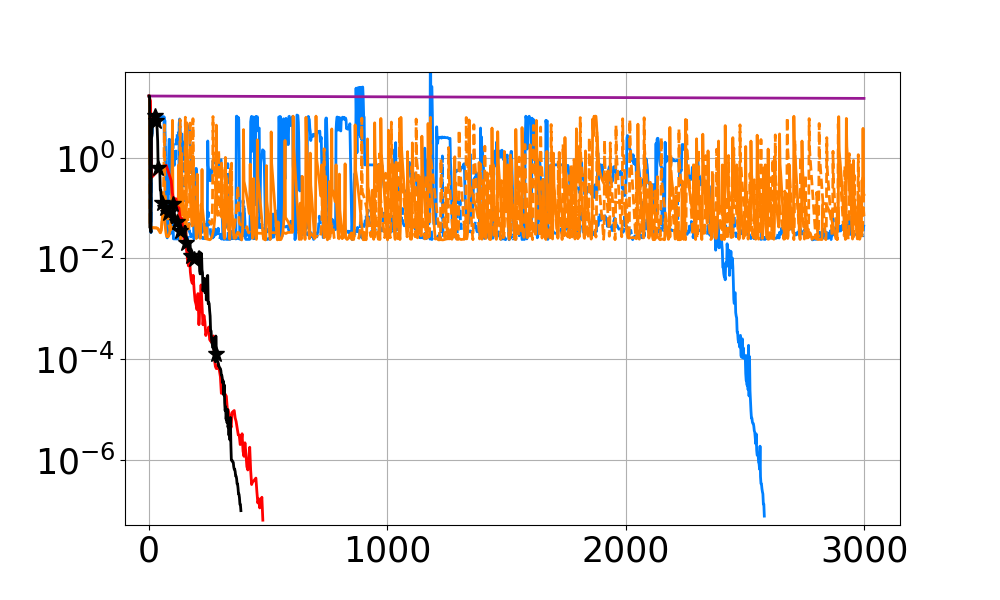}};
	\node[right] at (0.3,-9.0) {{\footnotesize(a)~ST \& \texttt{CIFAR10}}};
	\node[right] at (3.75,-9.0) {{\footnotesize(b)~ST \& \texttt{STL10}}};
	\node[right] at (6.85,-9.0) {{\footnotesize(c)~NLS \& \texttt{CIFAR10}}};
	\node[right] at (9.95,-9.0) {{\footnotesize(d)~NLS \& \texttt{STL10}}};
	\node at (12.97,0) {\rotatebox{-90}{{\tiny $m=5$}}};
	\node at (12.97,-1.9) {\rotatebox{-90}{{\tiny $m=10$}}};
	\node at (12.97,-3.8) {\rotatebox{-90}{{\tiny $m=15$}}};
	\node at (12.97,-5.7) {\rotatebox{-90}{{\tiny $m=20$}}};
	\node at (12.97,-7.7) {\rotatebox{-90}{{\tiny $m=30$}}};
\end{tikzpicture}
\vspace{-1ex}
	\caption{$ \|\nabla f(x^k)\| $ vs.\ {Oracle calls} for the student's $t$ (ST) and nonlinear least-squares (NLS) problem and the datasets \texttt{CIFAR10} and \texttt{STL10}. The plots in each column are generated using the identical initial point $x^0 \sim  \mathcal{N}^{d}(0,1) $. In each row, the different $\AAn$ methods and \texttt{L-BFGS} are executed using the same memory parameter $ m \in \{5,10,15,20,30\}$. The $x$-axes of each plot have the same scaling $0$\,--\,$3,000$ (as shown in the bottom row). \label{fig:gnorm}}
\end{figure}

\subsection{Nonconvex Classification}
We consider two nonconvex classification problems, namely a nonlinear least-squares problem and a student's-$t$ problem. A detailed introduction of the tested problems is deferred to the subsequent paragraphs. We will compare \cref{algo4} with four different methods:
\begin{itemize}
	\item[(1)] The gradient descent method (\texttt{GD}) with fixed step size $\frac{1}{L}$. This is the original Picard iteration \eqref{picard-iteration} and serves as a baseline.
	\item[(2)] Pure $\AAn$ with and without restarting~\cite{walker2011anderson}. Pure $\AAn$ does not use any globalization strategy, i.e., in each step, we perform an $\AAn$ iteration.
	\item[(3)] $\AAn$ with residual-based globalization. Our implementation is based on \texttt{A2DR} \cite[Algorithm 3]{fu2020anderson} and we consider two variants with and without restarting. \texttt{A2DR} uses a residual-based acceptance mechanism and we apply the method with the default parameters as suggested in \cite{fu2020anderson}.
	\item[(4)] \texttt{L-BFGS}. We implement \texttt{L-BFGS} with Wolfe conditions as in \cite[Algorithm 7.5]{jorge2006numerical}. The line-search parameter is set to $10^{-4}$ and the parameter in Wolfe's condition is set to $0.9$ as suggested in \cite{jorge2006numerical}. The maximum number of line-search iterations is set to $1,000$. 
\end{itemize}

We note that the line search procedure in \texttt{L-BFGS} can contain many function and gradient evaluations per iteration. Therefore, it is improper to give comparisons solely based on the number of iterations. In our plots, the $x$-axis is typically set as the number of oracle calls, which appears to be a more appropriate and fair measure when comparing $\AAn$ algorithms and \texttt{L-BFGS}. Specifically, the computation of each function value is counted as one oracle call and every gradient evaluation contributes as an additional oracle call. The $y$-axis is set as $(f(x^k) - f^{\star})/\max \{f^{\star}, 1\}$ or $\| \nabla f(x^k) \|$, respectively, where $ f^{\star} $ denotes the best objective function value among all algorithms over the maximum oracle calls. In the figures, we will add a special mark ``$\star$'' once the current $\AAn$ step is rejected and a gradient step is performed in \cref{algo4}.

We continue with the description of the utilized training datasets and several universal implementational details. We use the \texttt{CIFAR10} dataset \cite{krizhevsky2009learning}, which contains $60,000$ images with $32 \times 32$ colored pixels and the \texttt{STL10} dataset \cite{coates2011analysis}, which consists of $5,000$ colored images of size $96 \times 96$. Given that both datasets contain $ 10 $ classes, we split the data into even and odd classes to allow binary classification. We use $\{u_i, v_i\}_{i=1}^n$ to denote the training samples, where $u_i \in \mathbb{R}^d$ represents the training image and $v_i \in \{0, 1\}$ is the associated label. We set $U=\{u_1,u_2,\ldots,u_n\}^\top \in \mathbb{R}^{n \times d}$. We terminate the algorithms once $ \| \nabla f(x^k) \| \leq 10^{-7} $ or the number of oracle calls exceeds $ 3,000 $. The memory parameter $m$ is chosen from $ m \in \{5,10,15,20,30\}$ for all $\AAn$-based methods and \texttt{L-BFGS}. The regularization parameter $ \lambda $ in \eqref{eq:ST} and  \eqref{eq:NLS} is set to $10^{-2}$ for \texttt{CIFAR10} and to $10^{-1}$ for \texttt{STL10}. The initial points $ x^0 \sim \mathcal{N}^{d}(0,1) $ are generated following a normal distribution. Finally, in \cref{algo4}, we {utilize the default parameters: $ \gamma = \frac{0.01}{2L} $, $ c_1 = c_3 = 1 $, $ c_2 = \frac{0.99}{2mL} $, and $ \nu = 2.1$.  Let us briefly motivate this default choice. In order to promote acceptance of $\mathsf{AA}$ steps (and to ensure potential acceleration), the descent condition \eqref{eq:descent-condition} should not be too strict. This can be achieved when $\gamma$ is small and when the $\min$-term in \eqref{eq:descent-condition} is large. Hence, we set $\gamma$ fairly small, $c_2$ close to the theoretical threshold, and $\nu$ close to $2$. Furthermore, we have found that the simple choice $c_1 = c_3 = 1$ works well enforcing sufficient progress during the first iterations. An additional ablation study for $c_1$, $c_2$, $c_3$, and $\gamma$ is discussed in \cref{sec:sub-abla}}. We use the \texttt{LSQR} method \cite{paige1982lsqr} to solve the $\AAn$ subproblem \eqref{AAsubp} and to compute $\alpha^k = -H_k^{\dagger} h^{k-\hat m} =-(H_k^\top H_k)^{-1}H_k^\top h^{k-\hat m} $. (Here, $H_k^{\dagger}$ represents the Moore-Penrose pseudo-inverse of $H_k$). The termination condition of \texttt{LSQR} is set to $ \| H_k^\top (H_k \alpha + h^{k-\hat m}) \| < 10^{-16} $.

Next, we present the classification models used in our numerical comparison:
\begin{itemize}
\item \textbf{Student's-t Loss with $\ell_2$-Regularization (ST).} We consider the following classification problem with student's-t loss, \cite{aravkin2011robust,aravkin2012robust},
\end{itemize}
\begin{align}
\label{eq:ST}   
f(x) = \frac{1}{n} {\sum}_{i=1}^{n}  \log\left(1 + (u_i^\top x - v_i)^2 / \mu \right) + \frac{\lambda}{2}  \|x\|^2.
\end{align}
\begin{itemize}
\item[]The Lipschitz constant of $\nabla f$ is given by  $ L = \frac{2}{\mu n} \| U \|^2 + \lambda $ and we set $\mu = 20$. 
\item \textbf{Nonlinear Least-Squares Problem with $\ell_2$-Regularization (NLS).} As a second example, we consider a nonlinear least-squares problem, \cite{xuNonconvexEmpirical2017},
\end{itemize}
\begin{align}
\label{eq:NLS}  
f(x) = \frac{1}{n} {\sum}_{i=1}^{n}  (\psi(u_i^\top x) - v_i )^2 + \frac{\lambda}{2}  \|x\|^2,
\end{align}
\begin{itemize}
\item[]where $ \psi(z) = 1/(1+e^{-z}) $ is the sigmoid function. The Lipschitz constant of $\nabla f$ is given by $ L = \frac{1}{6n} \| U \|^2 + \lambda $.
\end{itemize}

The initial points for all algorithms and $ m \in \{ 5, 10, 15, 20, 30\}$ are identical for each tested dataset and classification model. Figures~\ref{fig:f} and \ref{fig:gnorm} illustrate that $\AAr$ with function value-based globalization (\ref{algo4}) is a competitive solver. Specifically, \cref{algo4} requires the least amount of oracle calls to satisfy the stopping criterion when $ m \in \{10, 15, 20, 30\}$. However, in the low memory case $ m=5 $, \cref{algo4} and the restarting strategy seem less effective (especially for the nonlinear least-squares problem). The plots in Figures \ref{fig:f}--\ref{fig:gnorm} generally underline the potential of function value- and descent-based globalization mechanisms for $\AAn$ schemes. Rejections predominantly occur in the early stage of the iterative process to ensure global convergence and progress of \cref{algo4}. In addition, transition to a pure $\AAr$ phase with accelerated convergence is maintained --- as indicated by our theoretical results. 

As the applications tested in this section are nonconvex, we have recorded the smallest eigenvalues of the respective Hessians in the last iterations of $\AAr$ for each of the problems and datasets. We have observed that that these eigenvalues are all approximately equal to $\lambda > 0$ and hence, assumption \ref{A2} is locally satisfied.  

\subsection{Descent Properties}
In Figure~\ref{fig:descent}, we plot the measure
\[ \rho_k := {\max \{f(x_{\AAn}^{k}) - f(g(x^k)), 0\}}/{\| \nabla f(x^{k-\hat{m}}) \|^3}\] 
versus the number of iterations $k$ to further visualize and verify the descent properties derived in \cref{thm3-18}. If the $\AAn$ step achieves descent, $ f(x_{\AAn}^{k}) \leq f(g(x^k)) $, then we have $\rho_k = 0$ and we locally expect $\rho_k \approx 0$ for all $k$ sufficiently large. The special marks ``$\star$'' in Figure~\ref{fig:descent} again indicate that an $\AAn$ step did not pass the descent condition \eqref{eq:descent-condition}. Figure~\ref{fig:descent} illustrates that $\rho_k$ indeed stays zero eventually and that no $\AAn$ steps are rejected locally. This observation is slightly less pronounced on \texttt{CIFAR10} when $m = 30$.

\begin{figure}[t]
	\setlength{\abovecaptionskip}{-3pt plus 3pt minus 0pt}
	\setlength{\belowcaptionskip}{-10pt plus 3pt minus 0pt}
	\centering
	\hspace*{-1.2ex}
	\begin{tikzpicture}[scale=1]
	\node[right] at (0.0,0) {\includegraphics[width=3.35cm,trim=15 20 60 45,clip]{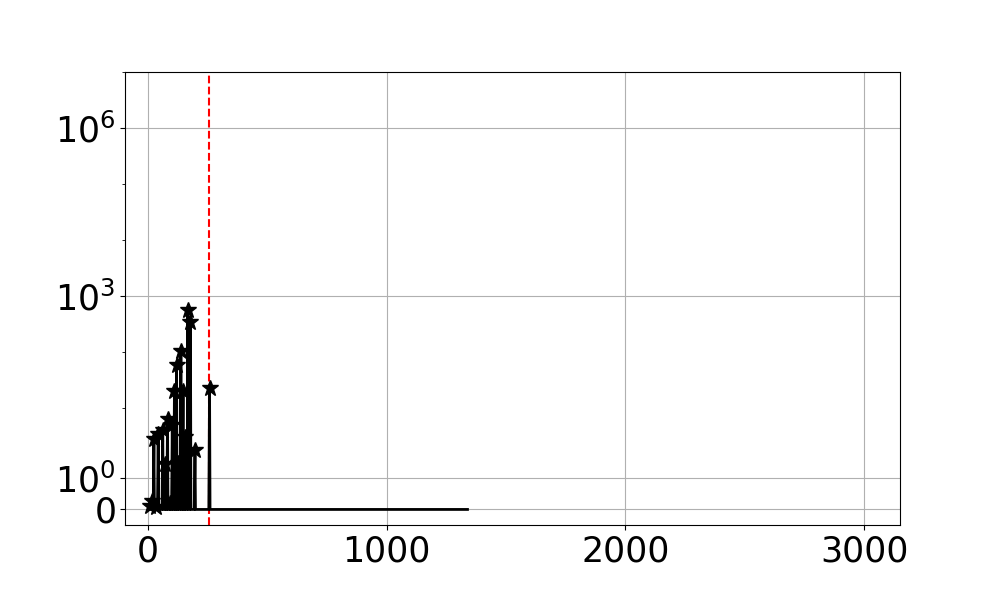}};
	\node[right] at (3.45,0) {\includegraphics[width=3cm,trim=85 20 60 45,clip]{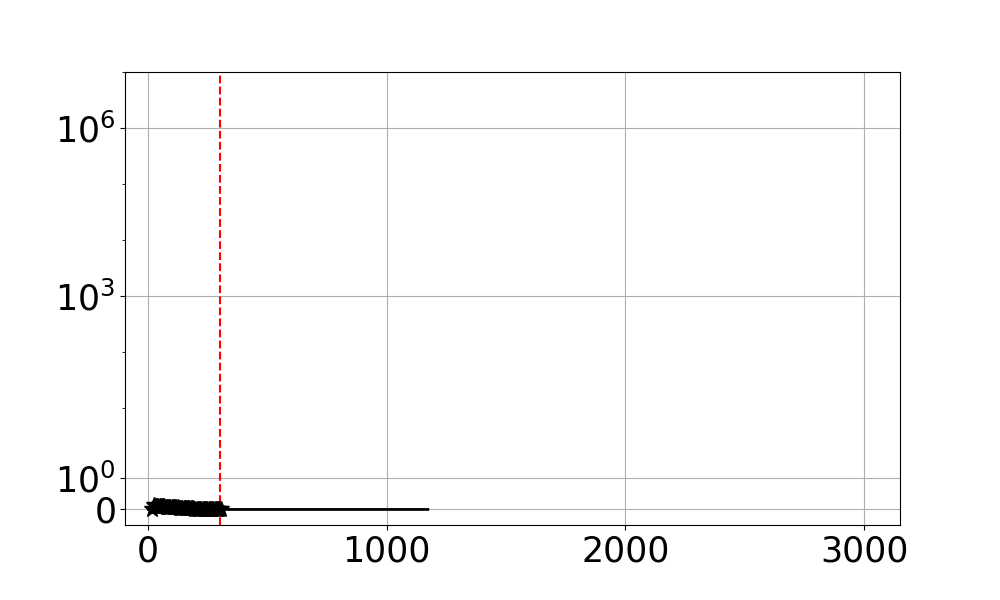}};
	\node[right] at (6.55,0) {\includegraphics[width=3cm,trim=85 20 60 45,clip]{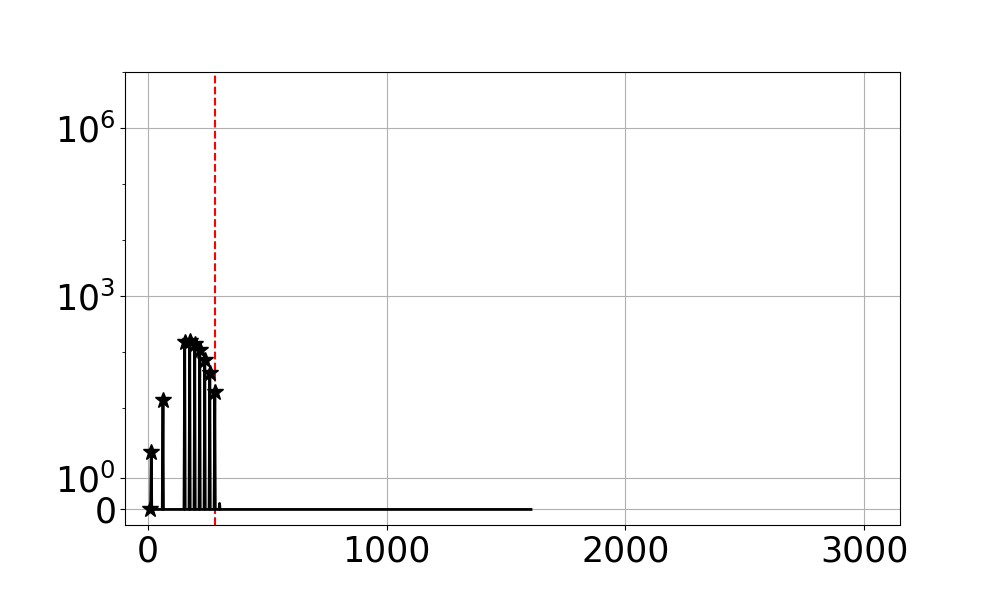}};
	\node[right] at (9.65,0) {\includegraphics[width=3cm,trim=85 20 60 45,clip]{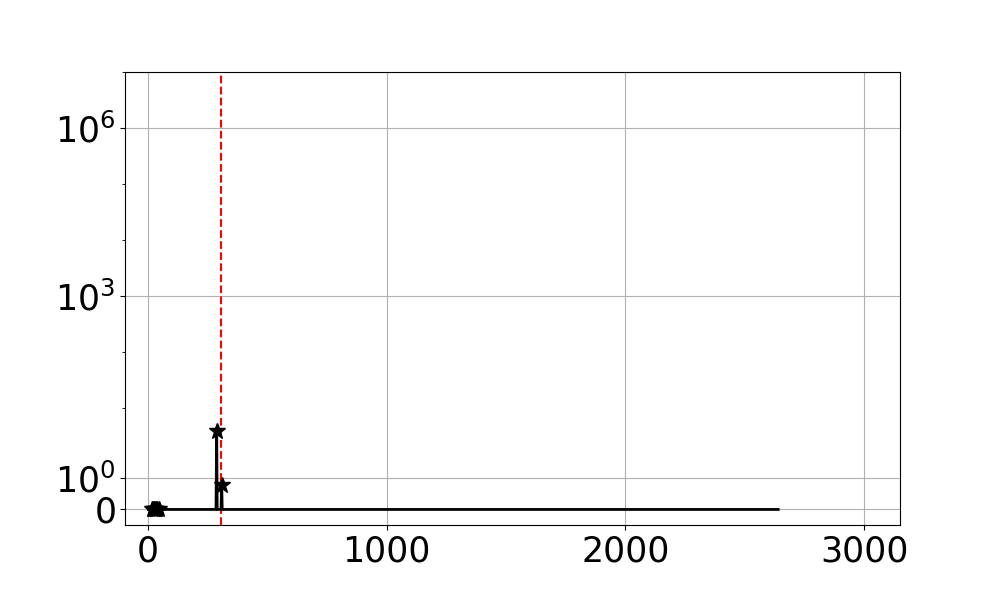}};
	\draw[black,very thin,densely dashed] (0.18,-1.05) -- (12.8,-1.05);
	\node[right] at (0.0,-2.0) {\includegraphics[width=3.35cm,trim=15 50 60 45,clip]{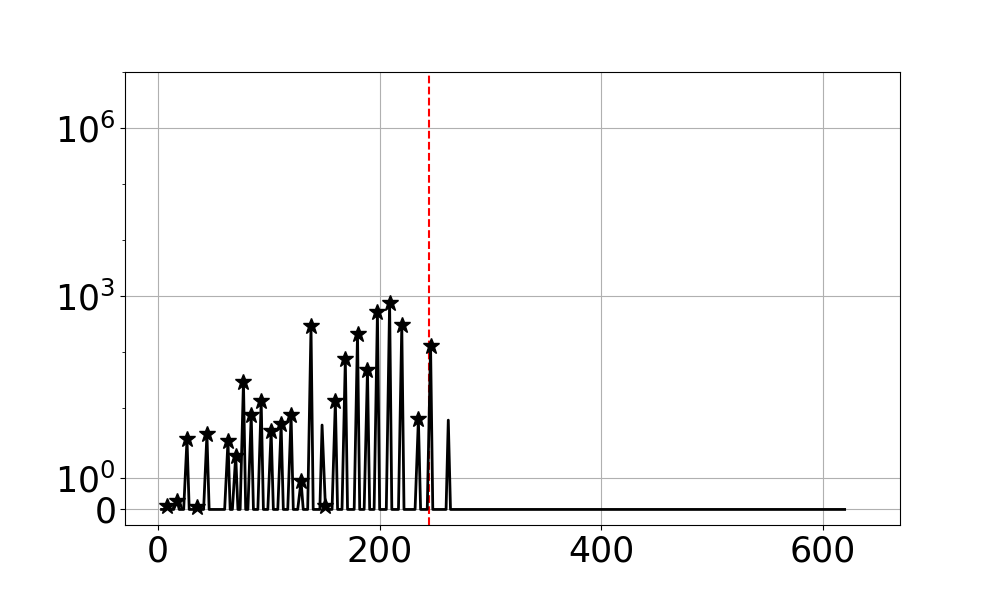}};
	\node[right] at (3.45,-2.0) {\includegraphics[width=3cm,trim=85 50 60 45,clip]{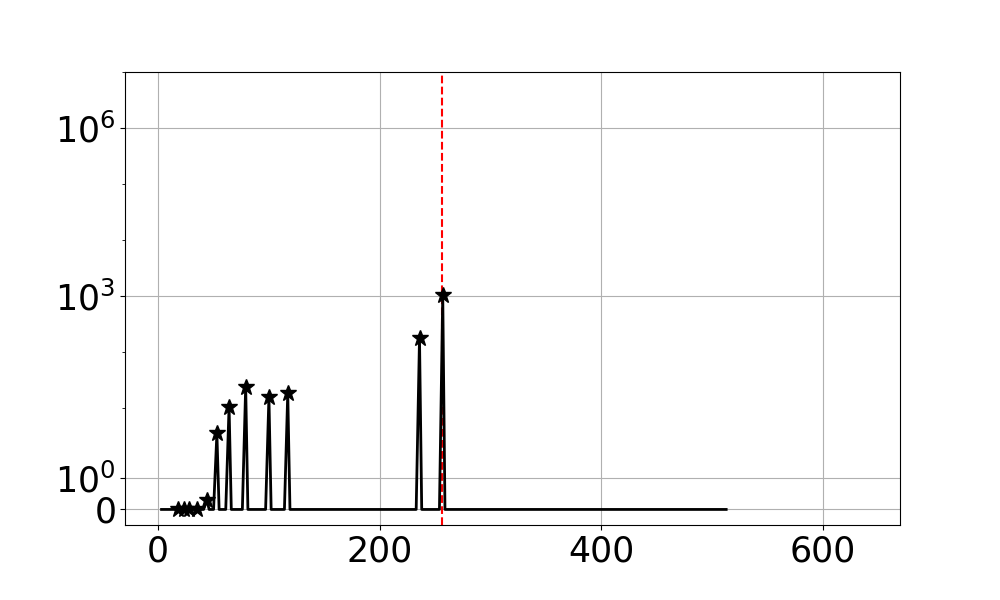}};
	\node[right] at (6.55,-2.0) {\includegraphics[width=3cm,trim=85 50 60 45,clip]{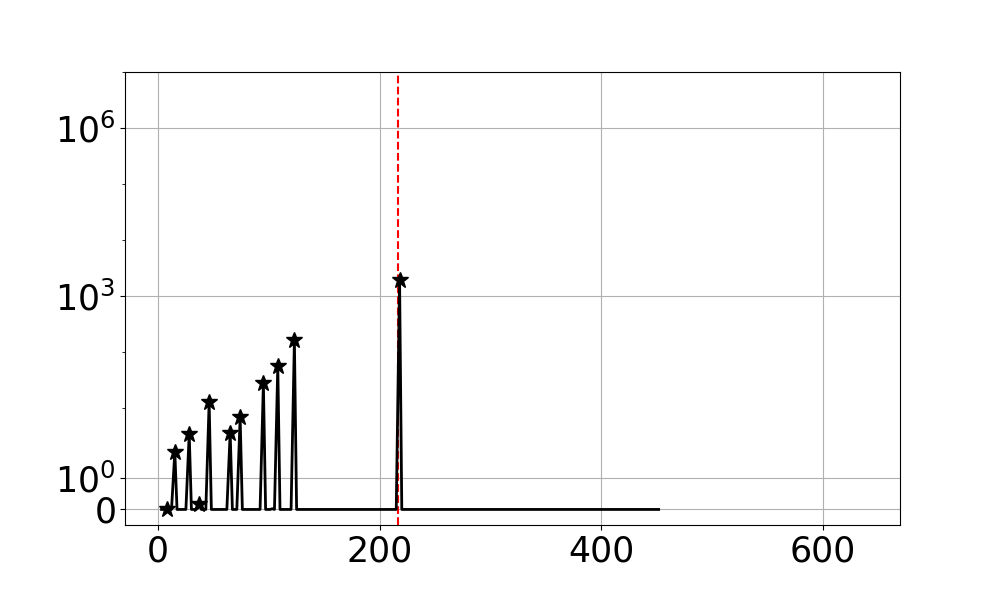}};
	\node[right] at (9.65,-2.0) {\includegraphics[width=3cm,trim=85 50 60 45,clip]{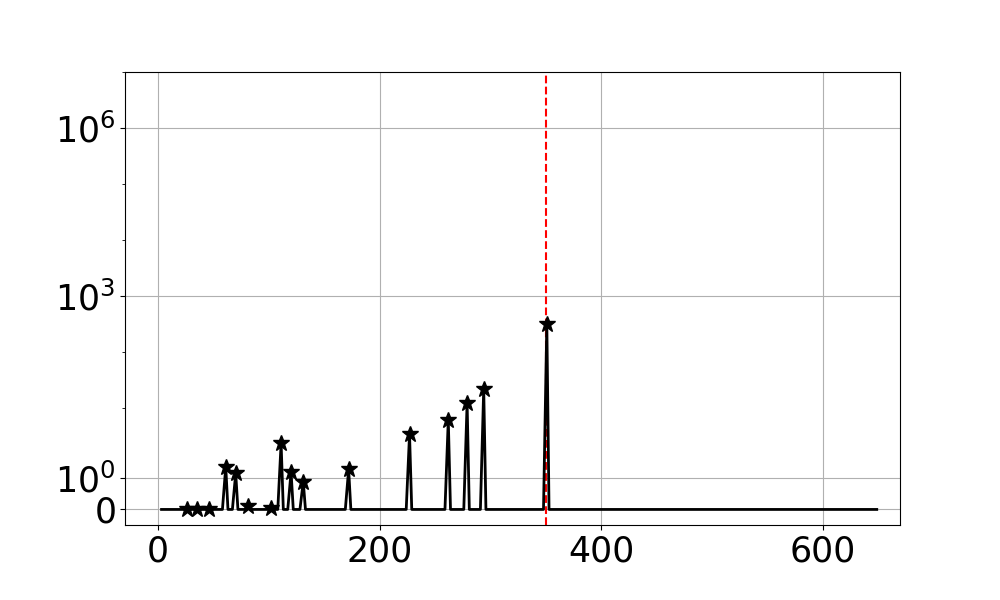}};
	\node[right] at (0.0,-3.9) {\includegraphics[width=3.35cm,trim=15 50 60 45,clip]{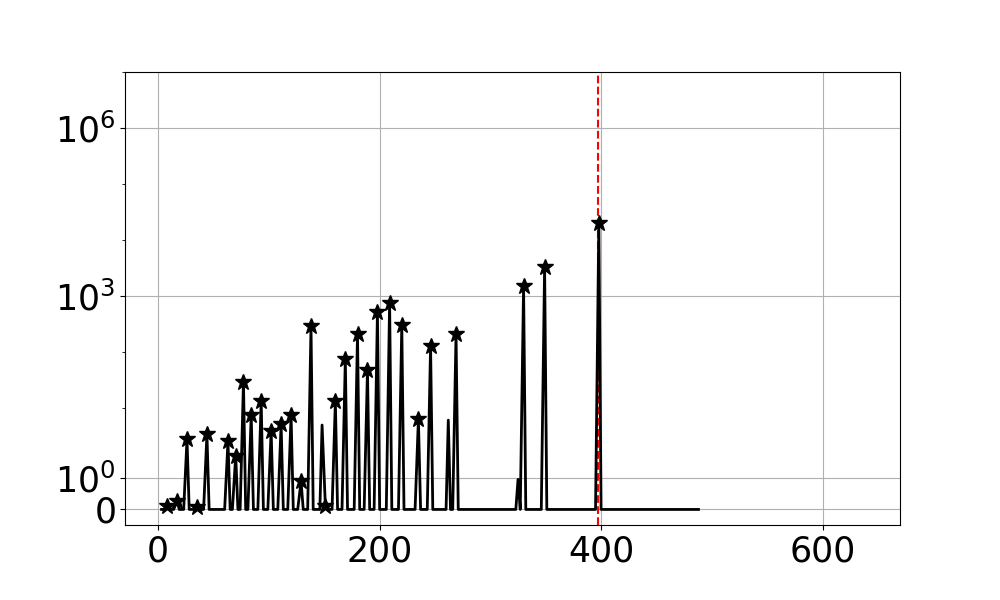}};
	\node[right] at (3.45,-3.9) {\includegraphics[width=3cm,trim=85 50 60 45,clip]{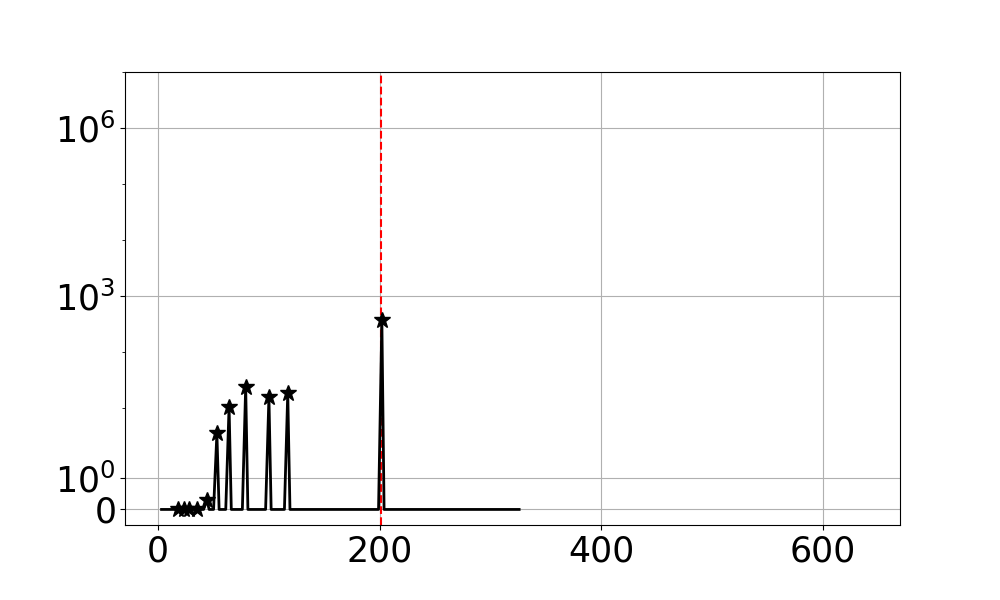}};
	\node[right] at (6.55,-3.9) {\includegraphics[width=3cm,trim=85 50 60 45,clip]{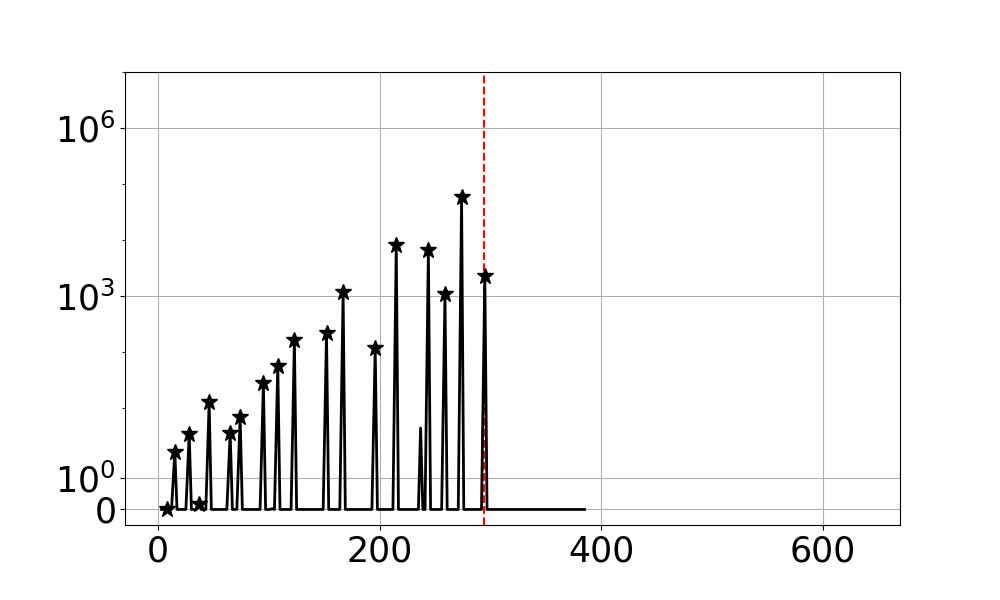}};
	\node[right] at (9.65,-3.9) {\includegraphics[width=3cm,trim=85 50 60 45,clip]{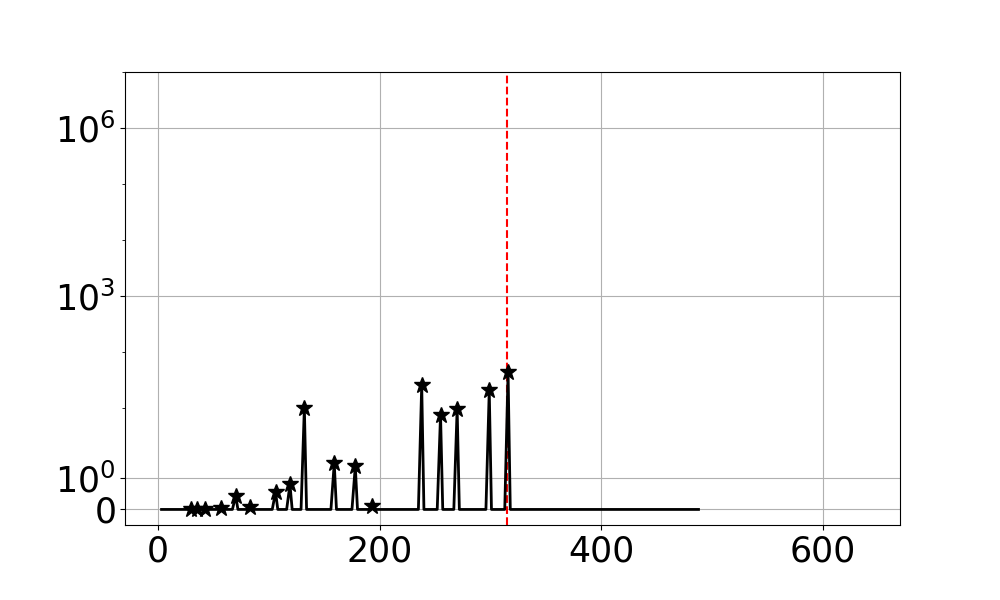}};
	\node[right] at (0.0,-5.8) {\includegraphics[width=3.35cm,trim=15 50 60 45,clip]{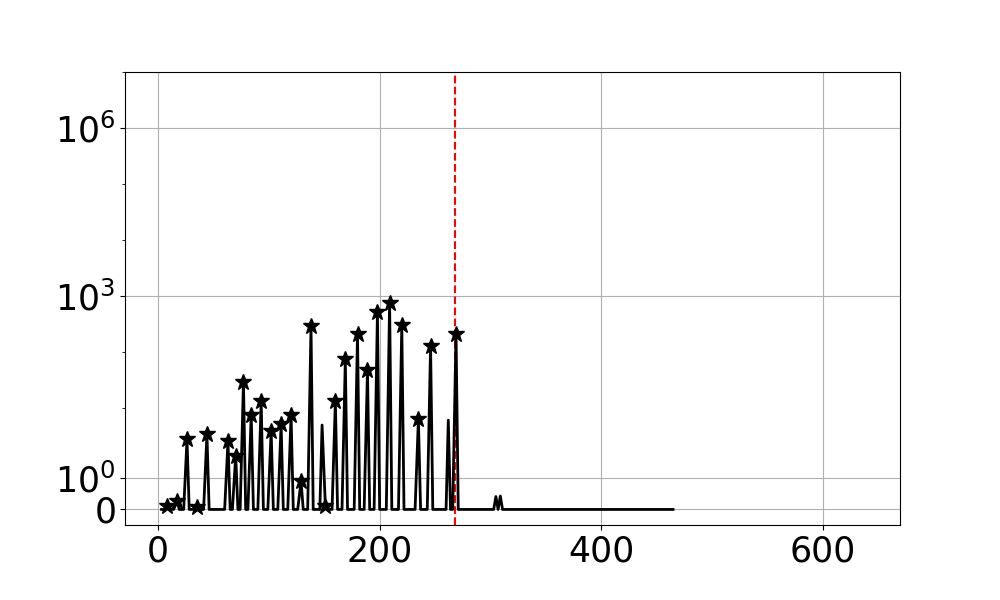}};
	\node[right] at (3.45,-5.8) {\includegraphics[width=3cm,trim=85 50 60 45,clip]{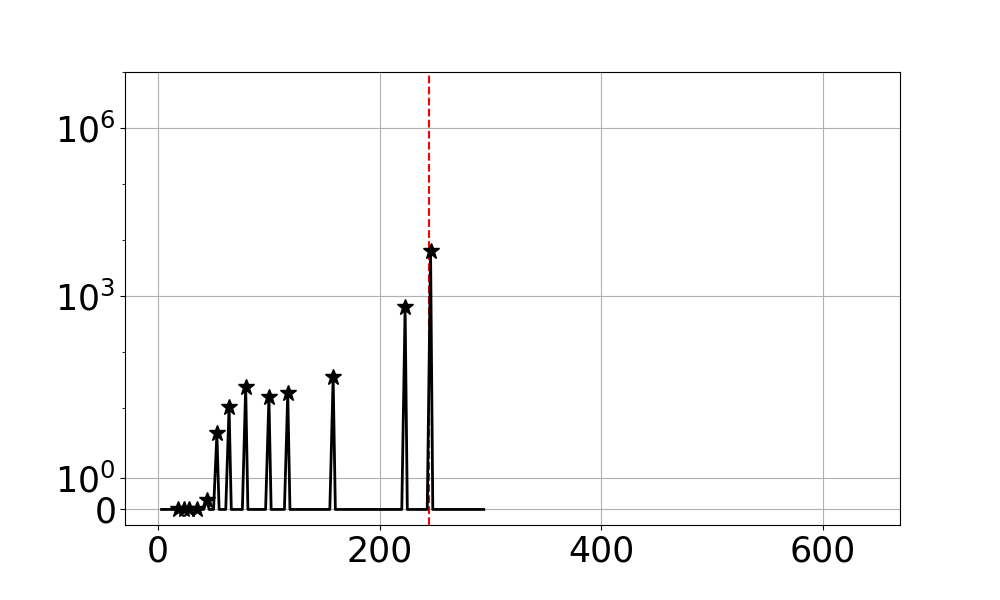}};
	\node[right] at (6.55,-5.8) {\includegraphics[width=3cm,trim=85 50 60 45,clip]{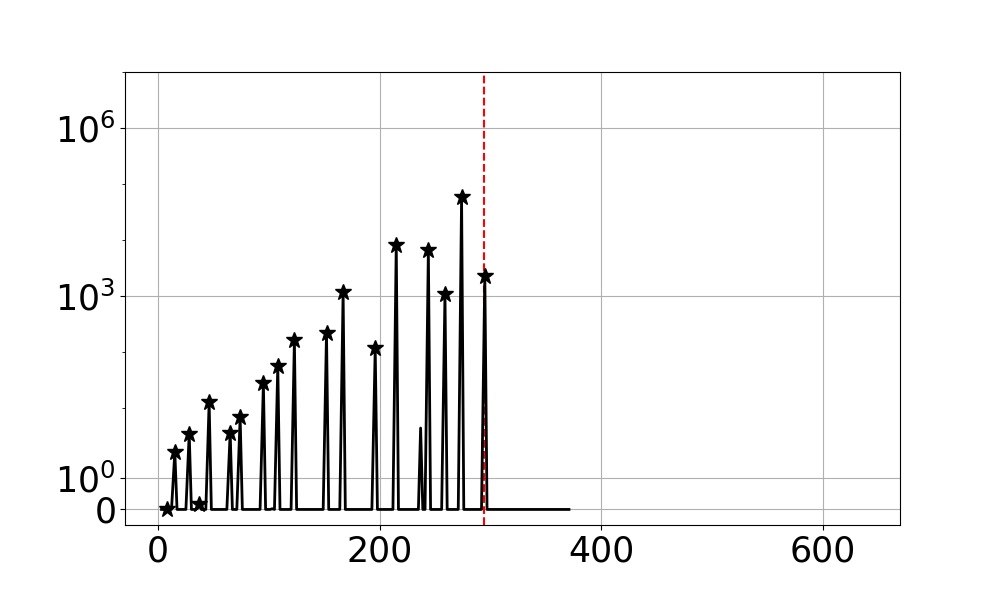}};
	\node[right] at (9.65,-5.8) {\includegraphics[width=3cm,trim=85 50 60 45,clip]{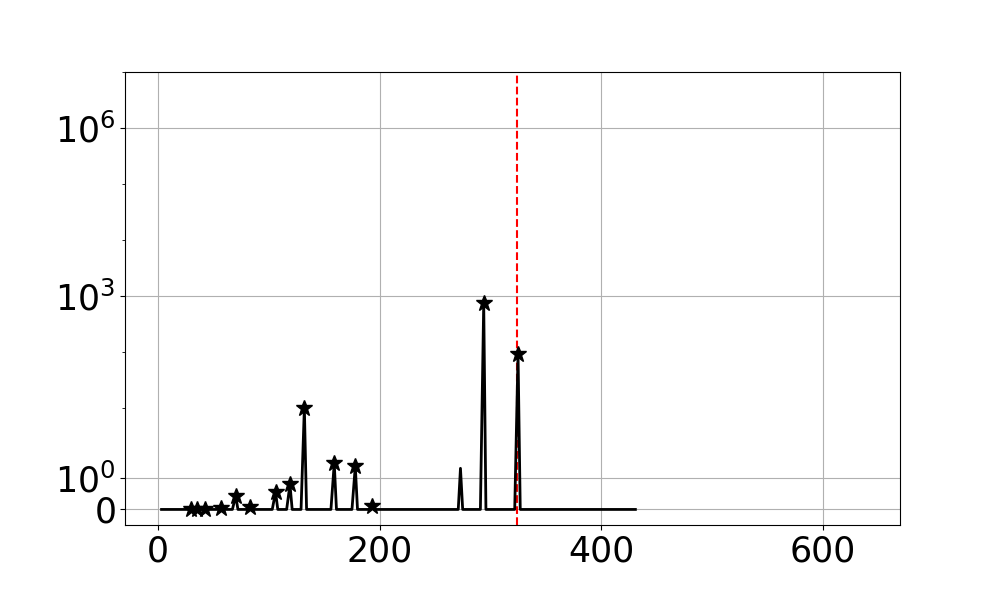}};
	\node[right] at (0.0,-7.8) {\includegraphics[width=3.35cm,trim=15 20 60 45,clip]{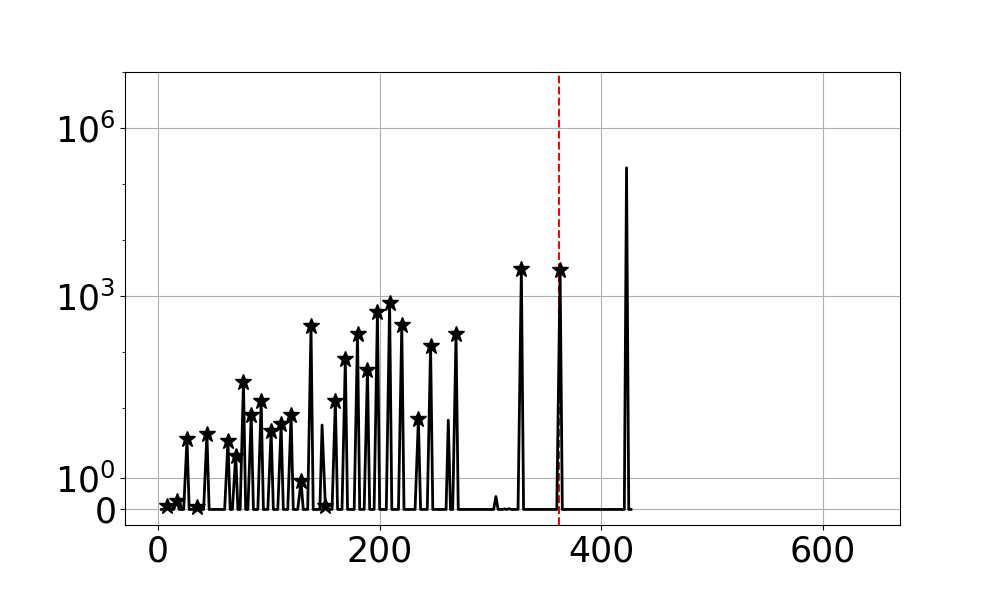}};
	\node[right] at (3.45,-7.8) {\includegraphics[width=3cm,trim=85 20 60 45,clip]{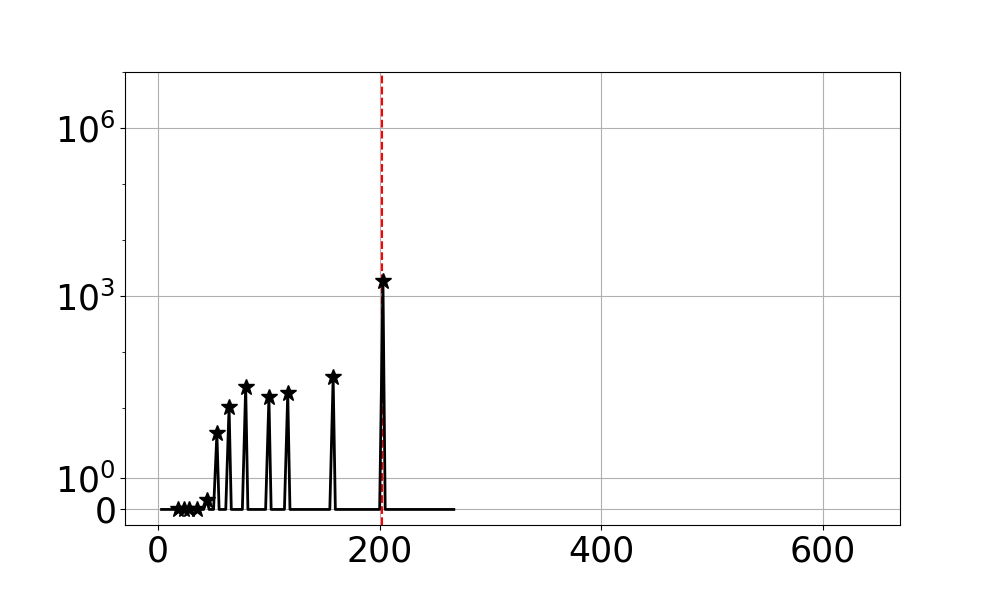}};
	\node[right] at (6.55,-7.8) {\includegraphics[width=3cm,trim=85 20 60 45,clip]{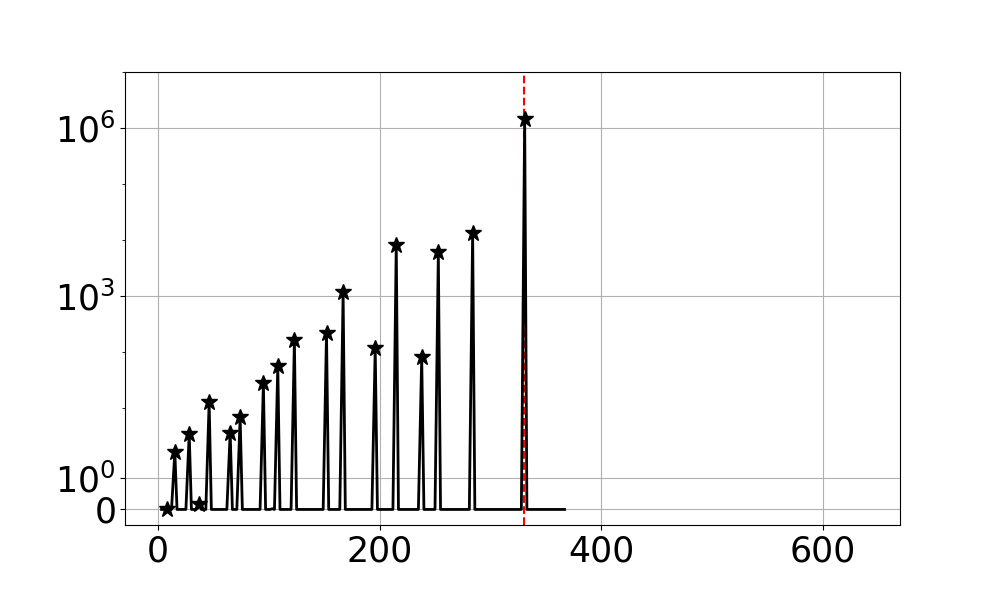}};
	\node[right] at (9.65,-7.8) {\includegraphics[width=3cm,trim=85 20 60 45,clip]{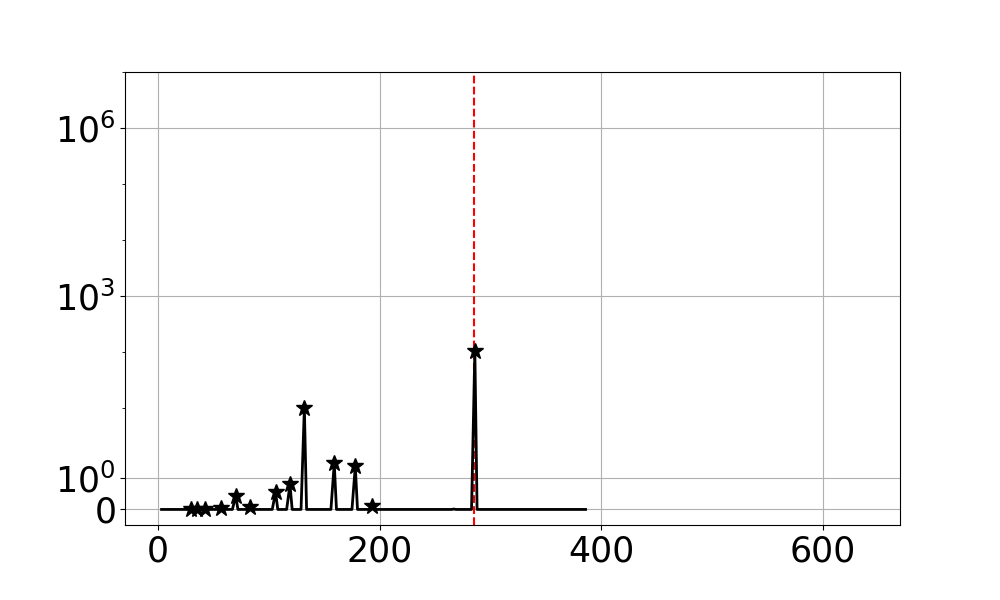}};
	\node[right] at (0.3,-9.1) {{\footnotesize(a)~ST \& \texttt{CIFAR10}}};
	\node[right] at (3.75,-9.1) {{\footnotesize(b)~ST \& \texttt{STL10}}};
	\node[right] at (6.85,-9.1) {{\footnotesize(c)~NLS \& \texttt{CIFAR10}}};
	\node[right] at (9.95,-9.1) {{\footnotesize(d)~NLS \& \texttt{STL10}}};
	\node at (12.97,0) {\rotatebox{-90}{{\tiny $m=5$}}};
	\node at (12.97,-2.0) {\rotatebox{-90}{{\tiny $m=10$}}};
	\node at (12.97,-3.9) {\rotatebox{-90}{{\tiny $m=15$}}};
	\node at (12.97,-5.8) {\rotatebox{-90}{{\tiny $m=20$}}};
	\node at (12.97,-7.8) {\rotatebox{-90}{{\tiny $m=30$}}};
\end{tikzpicture}
\caption{Plot of $\rho_k = \max \{f(x^{k}_{\AAn}) - f(g(x^k)), 0\} / \| \nabla f(x^{k-\hat{m}}) \|^3 $ vs.\ {Oracle calls} for \cref{algo4}. The marks ``$\star$'' indicate rejected $\AAn$ steps. After the last rejected $\AAn$ step (red dashed vertical line), $\rho_k$ mostly stays $ 0 $, which verifies and illustrates \cref{thm3-18}. The $x$-axes of each plot in the rows $m \in \{10,15,20,30\}$ have the same scaling $0$\,--\,$700$. For $m = 5$, the scaling is $0$\,--\,$3,000$. \label{fig:descent}}
\end{figure}

\subsection{Ablation Study} \label{sec:sub-abla}

\begin{figure}[t]
	\setlength{\abovecaptionskip}{-3pt plus 3pt minus 0pt}
	\setlength{\belowcaptionskip}{-10pt plus 3pt minus 0pt}
	\centering
	\includegraphics[width=13.0cm]{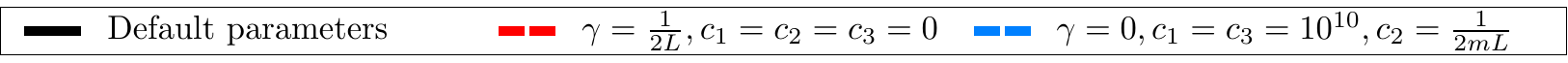}
	\hspace*{-1.2ex}
	\begin{tikzpicture}[scale=1]
	\node[right] at (0.0,0) {\includegraphics[width=3.35cm,trim=15 20 60 45,clip]{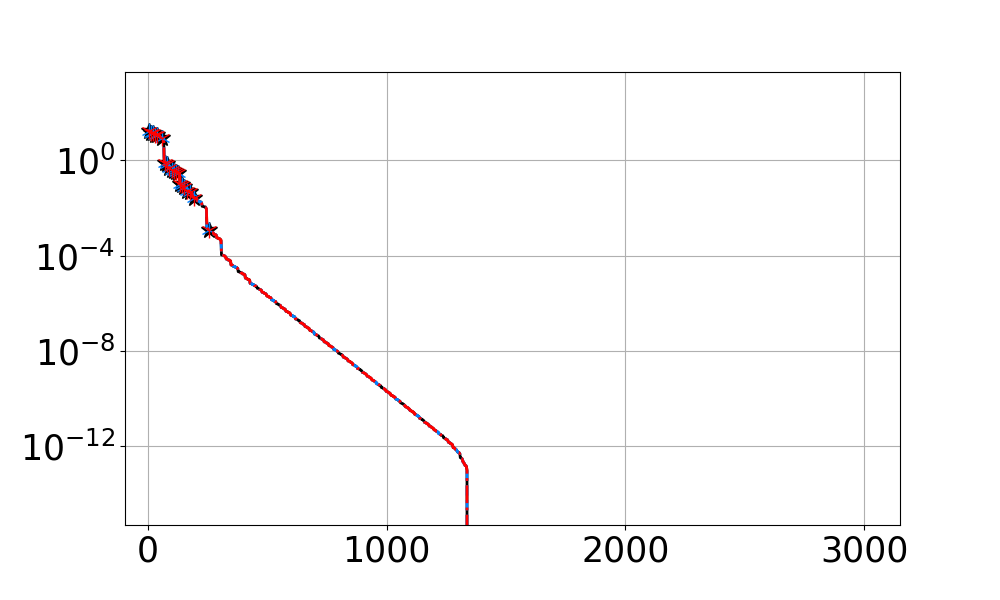}};
	\node[right] at (3.45,0) {\includegraphics[width=3cm,trim=85 20 60 45,clip]{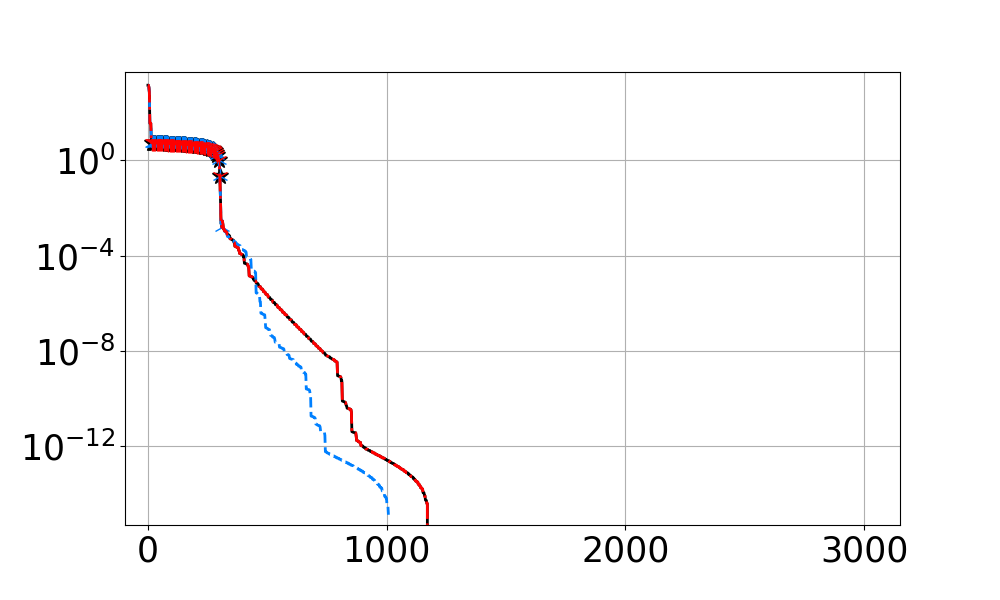}};
	\node[right] at (6.55,0) {\includegraphics[width=3cm,trim=85 20 60 45,clip]{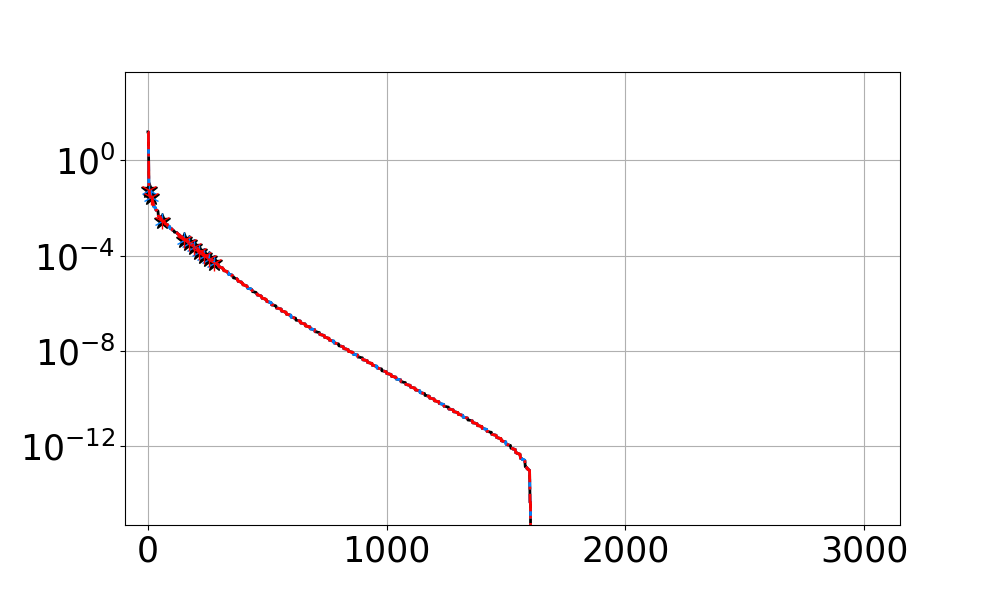}};
	\node[right] at (9.65,0) {\includegraphics[width=3cm,trim=85 20 60 45,clip]{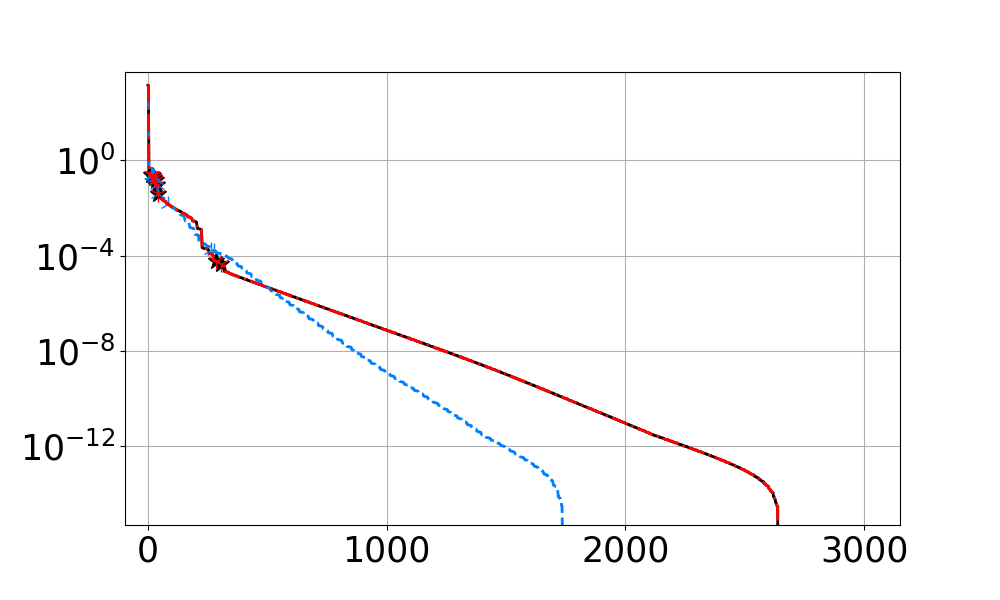}};
	\draw[black,very thin,densely dashed] (0.18,-1.05) -- (12.8,-1.05);
	\node[right] at (0.0,-2.0) {\includegraphics[width=3.35cm,trim=15 50 60 45,clip]{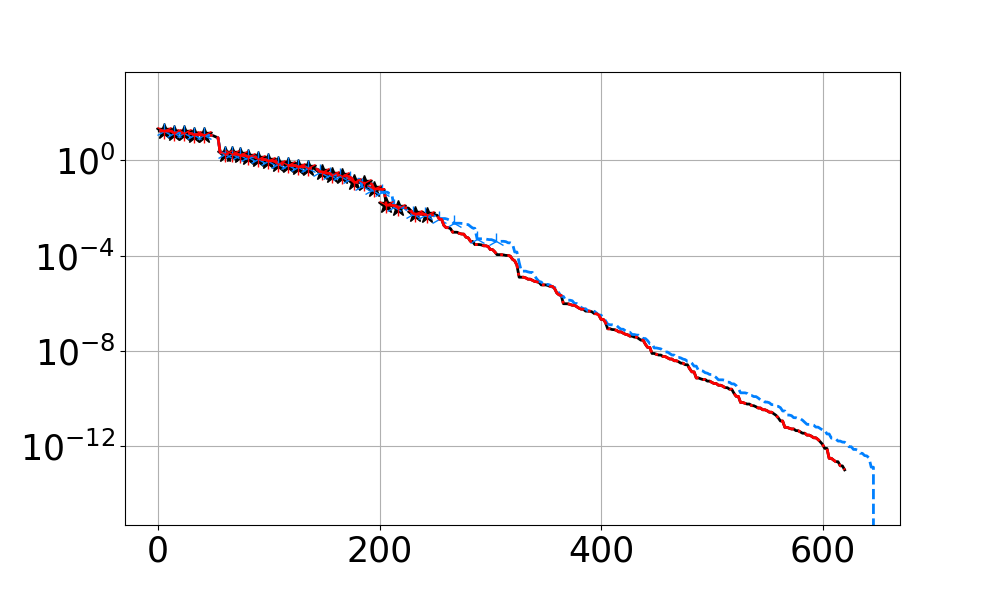}};
	\node[right] at (3.45,-2.0) {\includegraphics[width=3cm,trim=85 50 60 45,clip]{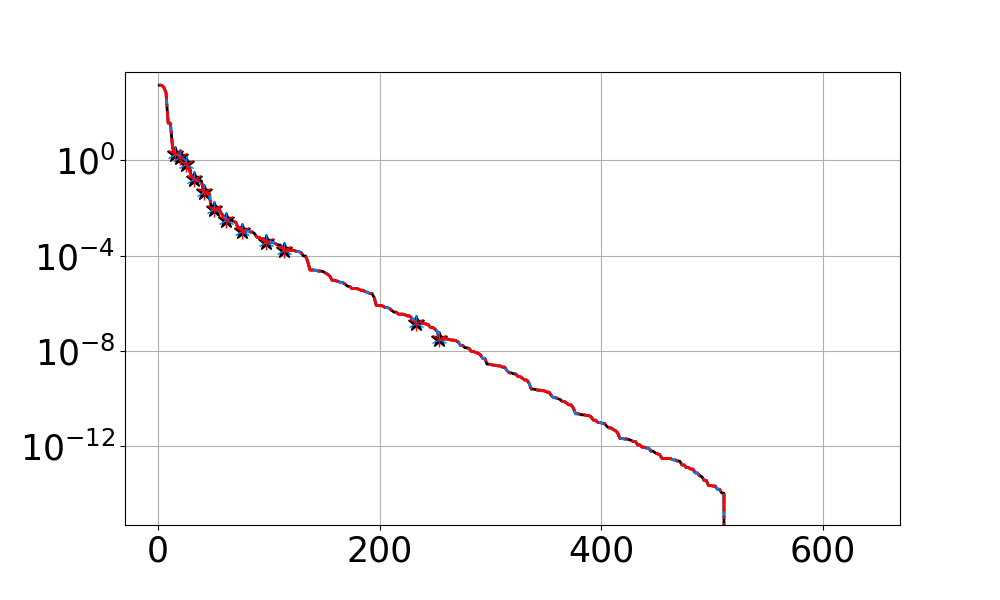}};
	\node[right] at (6.55,-2.0) {\includegraphics[width=3cm,trim=85 50 60 45,clip]{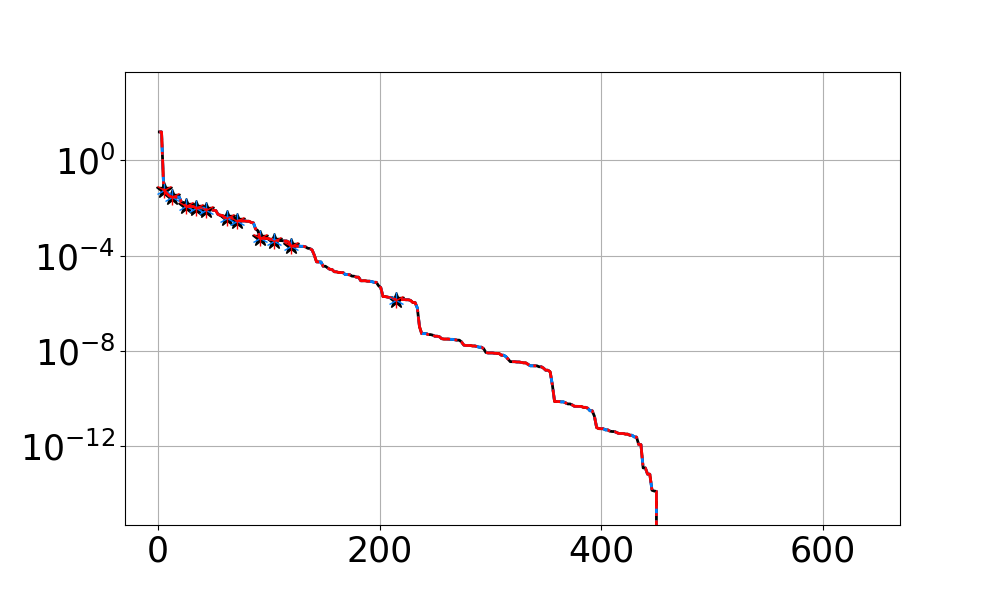}};
	\node[right] at (9.65,-2.0) {\includegraphics[width=3cm,trim=85 50 60 45,clip]{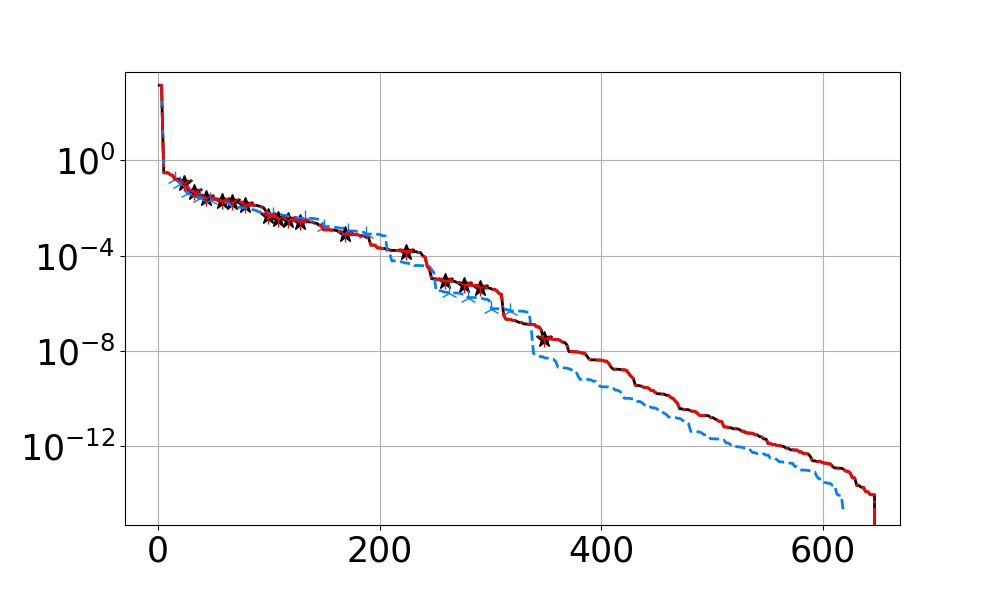}};
	\node[right] at (0.0,-3.9) {\includegraphics[width=3.35cm,trim=15 50 60 45,clip]{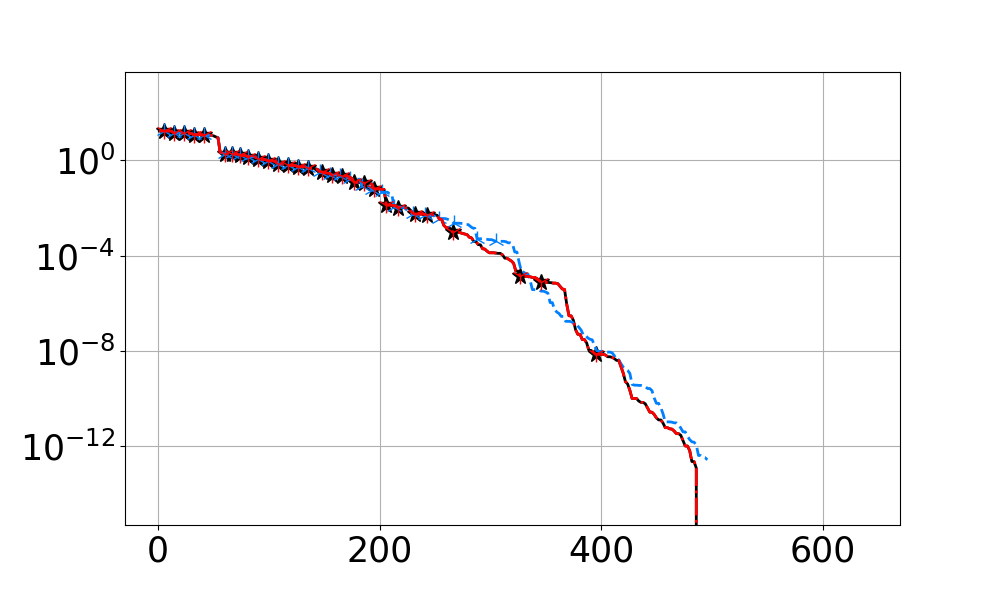}};
	\node[right] at (3.45,-3.9) {\includegraphics[width=3cm,trim=85 50 60 45,clip]{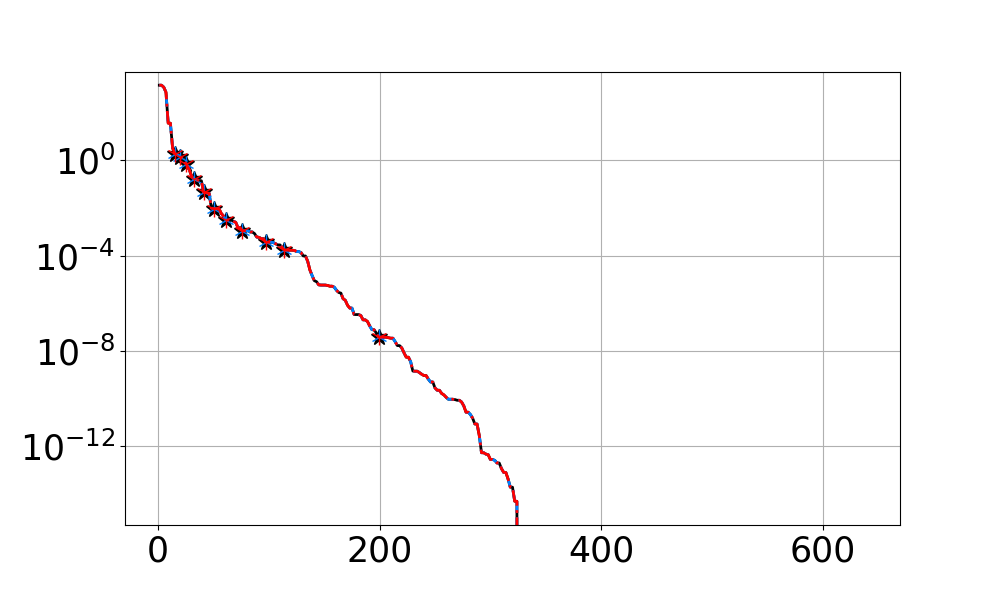}};
	\node[right] at (6.55,-3.9) {\includegraphics[width=3cm,trim=85 50 60 45,clip]{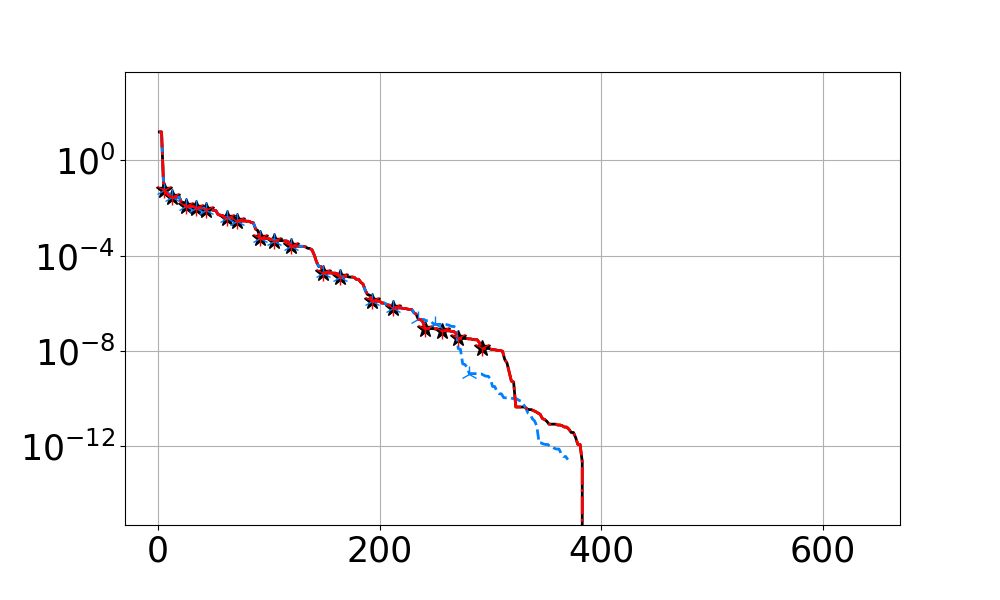}};
	\node[right] at (9.65,-3.9) {\includegraphics[width=3cm,trim=85 50 60 45,clip]{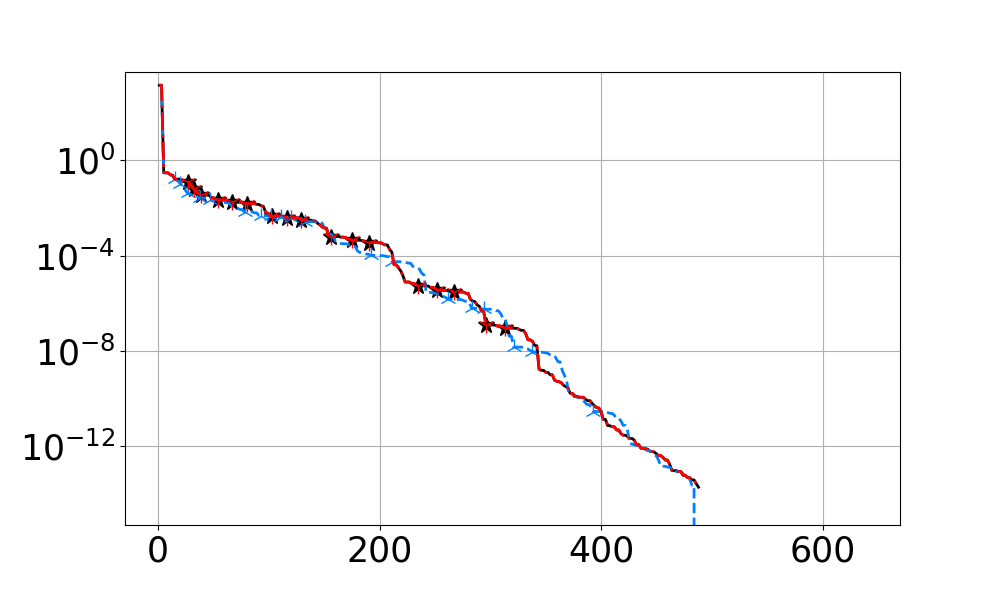}};
	\node[right] at (0.0,-5.8) {\includegraphics[width=3.35cm,trim=15 50 60 45,clip]{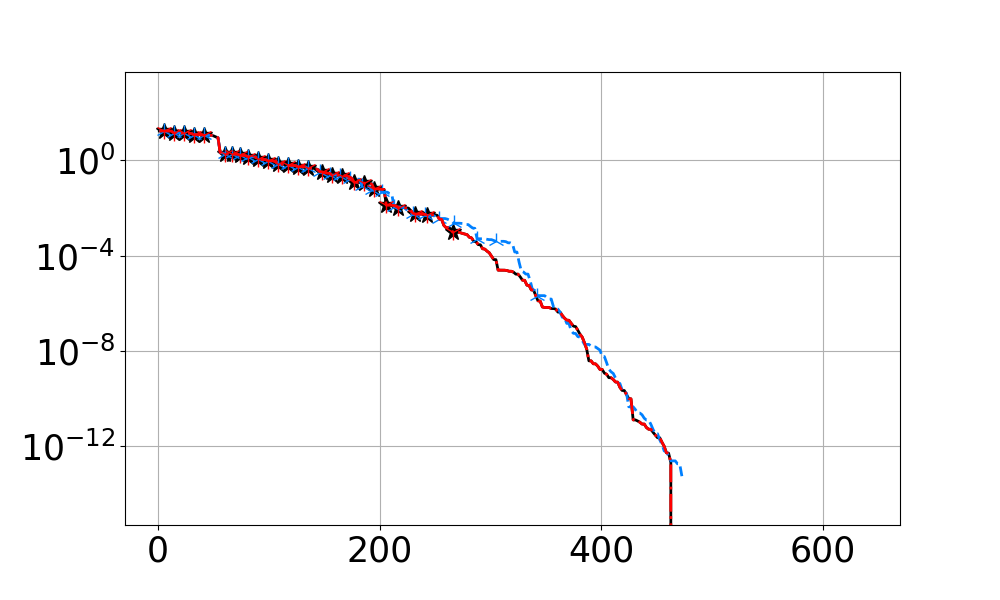}};
	\node[right] at (3.45,-5.8) {\includegraphics[width=3cm,trim=85 50 60 45,clip]{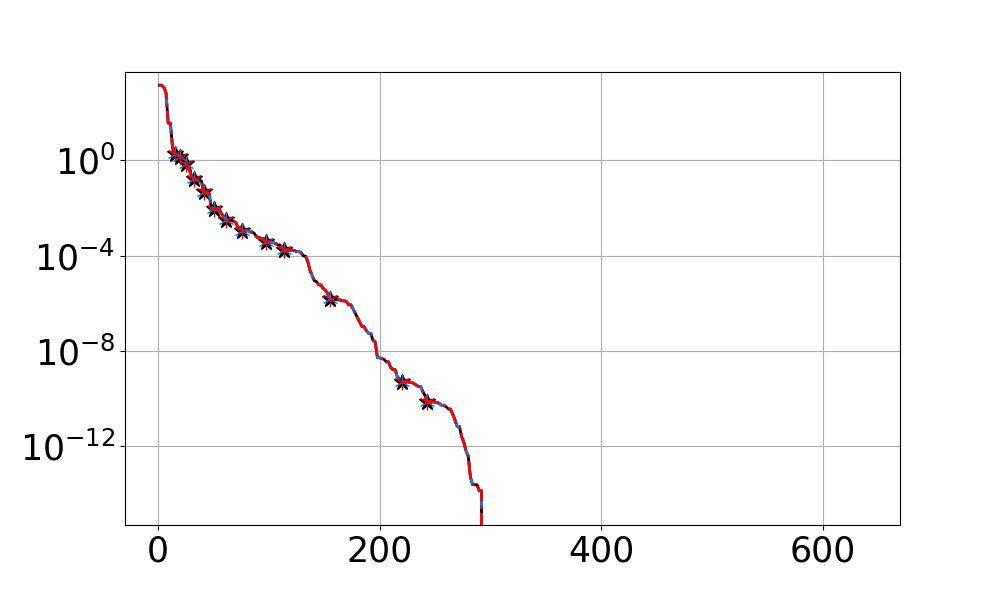}};
	\node[right] at (6.55,-5.8) {\includegraphics[width=3cm,trim=85 50 60 45,clip]{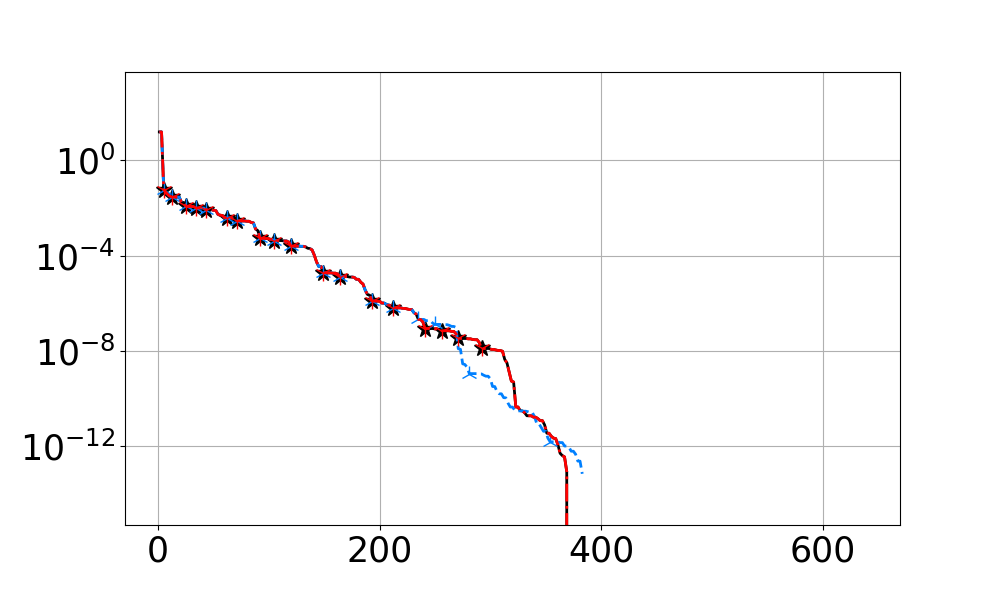}};
	\node[right] at (9.65,-5.8) {\includegraphics[width=3cm,trim=85 50 60 45,clip]{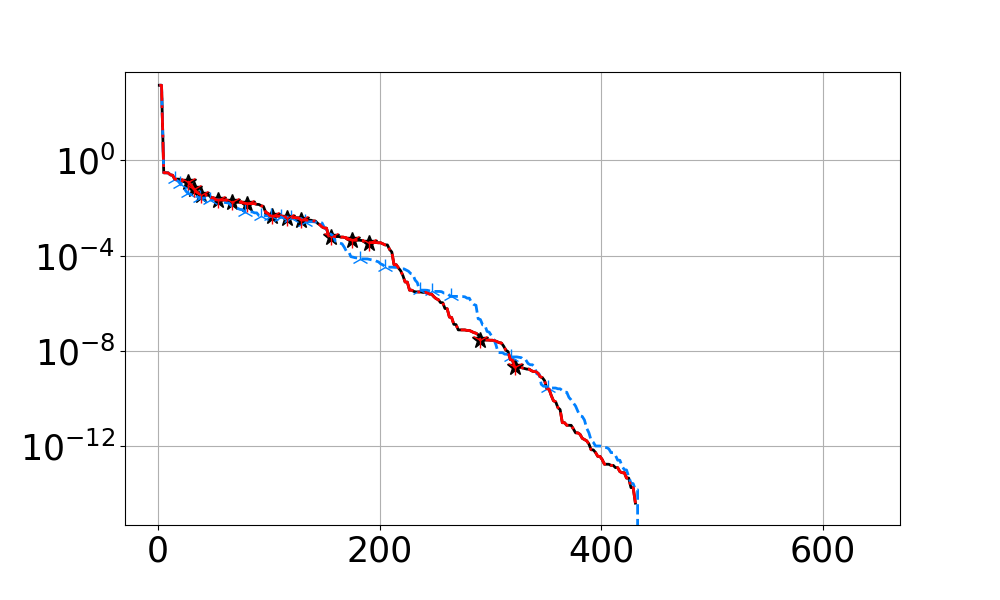}};
	\node[right] at (0.0,-7.8) {\includegraphics[width=3.35cm,trim=15 20 60 45,clip]{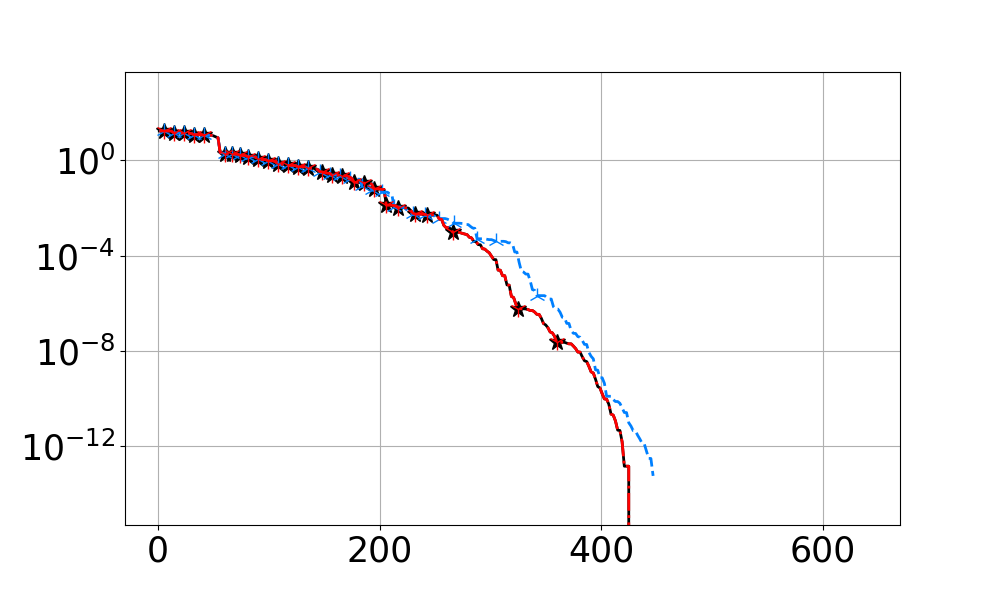}};
	\node[right] at (3.45,-7.8) {\includegraphics[width=3cm,trim=85 20 60 45,clip]{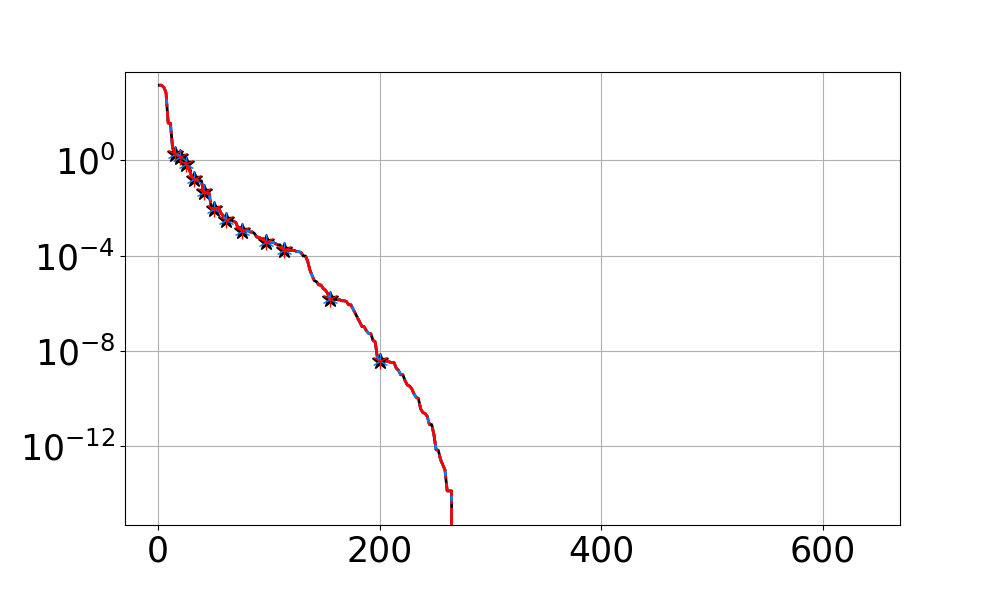}};
	\node[right] at (6.55,-7.8) {\includegraphics[width=3cm,trim=85 20 60 45,clip]{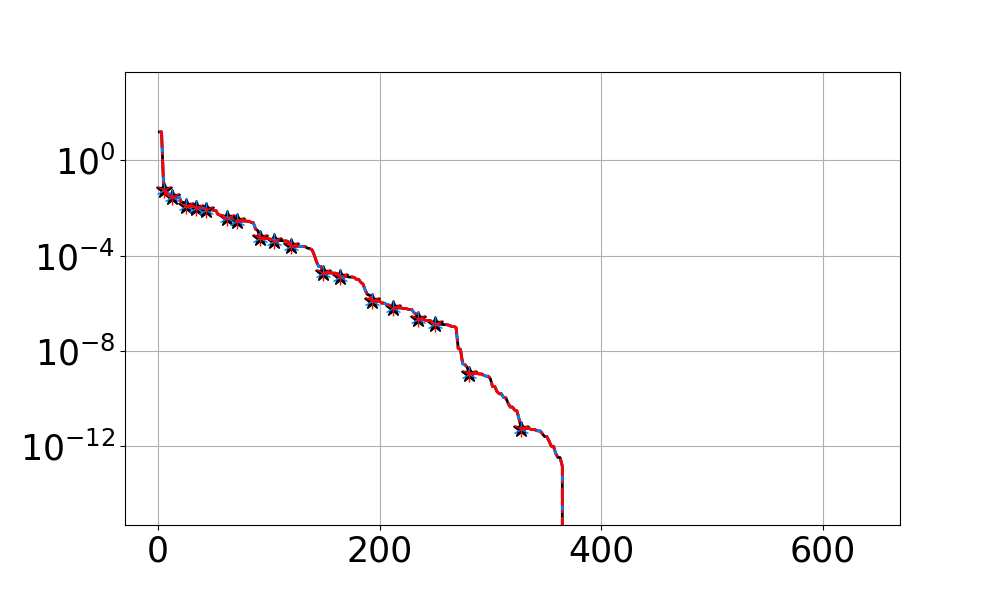}};
	\node[right] at (9.65,-7.8) {\includegraphics[width=3cm,trim=85 20 60 45,clip]{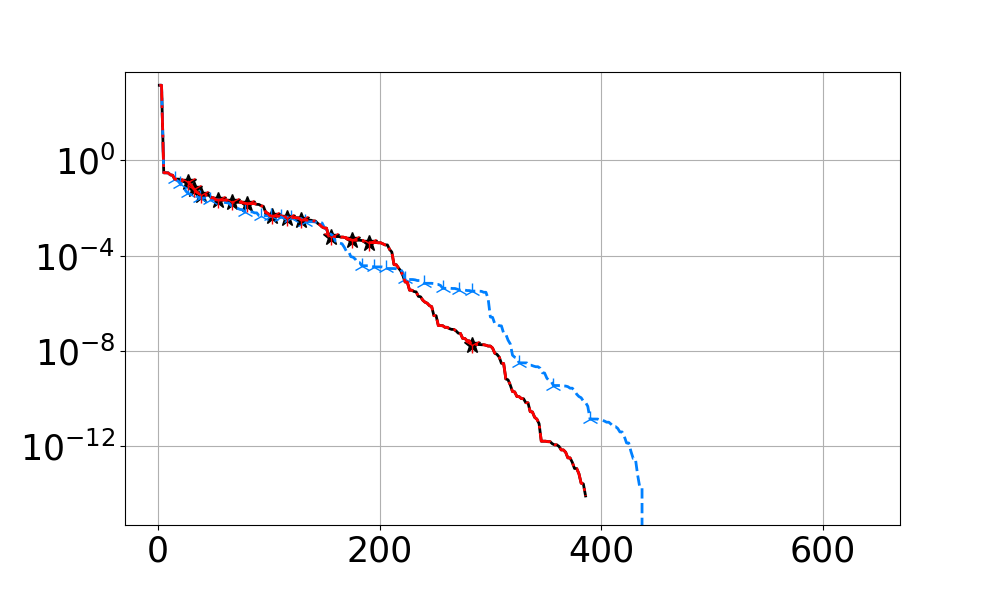}};
	\node[right] at (0.3,-9.1) {{\footnotesize(a)~ST \& \texttt{CIFAR10}}};
	\node[right] at (3.75,-9.1) {{\footnotesize(b)~ST \& \texttt{STL10}}};
	\node[right] at (6.85,-9.1) {{\footnotesize(c)~NLS \& \texttt{CIFAR10}}};
	\node[right] at (9.95,-9.1) {{\footnotesize(d)~NLS \& \texttt{STL10}}};
	\node at (12.97,0) {\rotatebox{-90}{{\tiny $m=5$}}};
	\node at (12.97,-2.0) {\rotatebox{-90}{{\tiny $m=10$}}};
	\node at (12.97,-3.9) {\rotatebox{-90}{{\tiny $m=15$}}};
	\node at (12.97,-5.8) {\rotatebox{-90}{{\tiny $m=20$}}};
	\node at (12.97,-7.8) {\rotatebox{-90}{{\tiny $m=30$}}};
\end{tikzpicture}
\caption{Ablation study of \cref{algo4} using different $c_1$, $c_2$, $c_3$, and $\gamma$. Each plot depicts $ (f(x^k) - f^{*}) / \max \{f^{*}, 1\} $ vs.\ {Oracle calls} for three different runs of \cref{algo4}. We compare the default parameters with the extreme choices $ \gamma = \frac{1}{2L} $, $ c_1=c_2=c_3 = 0 $ and $ \gamma = 0$, $c_1=c_3 = 10^{10}$, $c_2=\frac{1}{2mL} $. The $x$-axes of each plot in the rows $m \in \{10,15,20,30\}$ have the same scaling $0$\,--\,$700$. For $m = 5$, the scaling is $0$\,--\,$3,000$. \label{fig:extreme_f}}
\end{figure}

Finally, we provide an additional ablation study for the parameters $c_1$, $c_2$, $c_3$, and $\gamma$ used in \cref{algo4} and in the definition of the descent condition \eqref{eq:descent-condition}. Based on \cref{thm4-1}, $c_1$, $c_2$, $c_3$, and $\gamma$ need to satisfy the conditions $ 0 < \gamma < \frac{1}{2L} $, $ c_1, c_3 > 0 $, and $ 0 < c_2 < \frac{1}{2mL} $. We compare our default choice with the following extreme sets of parameters:
\begin{equation} \label{eq:extreme}  \gamma = \frac{1}{2L}, \; c_1=c_2=c_3 = 0, \quad \text{and} \quad \gamma = 0, \; c_1=c_3 = 10^{10},\; c_2=\frac{1}{2mL}. \end{equation}
These two choices correspond to highly strict and loose acceptance criteria for the $\AAn$ step $ x_{\AAn}^{k} $. Since $ \nu \in (2,3)$ has only limited  influence, we omit an explicit ablation study for $ \nu $ and use the default value $\nu = 2.1$. Figure~\ref{fig:extreme_f} demonstrates that \cref{algo4} is robust with respect to the choice of $c_1$, $c_2$, $c_3$, and $\gamma$. In particular, performance is only affected marginally when using the more extreme parameters \eqref{eq:extreme}. 


\section{Conclusion}
In this work, we study descent properties of an Anderson accelerated gradient method with restarting. We first show that the iterates generated by $\AAr$ are equivalent to the iterates generated by $\GMRES$ after an additional gradient step within each restarting cycle. Based on the symmetry of the underlying system matrix, we then analyze the error between the iterates generated by $\GMRES$ and $\CR$ and verify that this error is controllable and related to some higher-order perturbation terms. After connecting $\CR$ and $\CG$, the desired descent property for $\AAr$ can be expressed in terms of distances to the respective optimal solution for the iterates generated by $\CG$. We establish such a convergence result for $\CG$ utilizing classical techniques. Combining these different observations, we prove that $\AAr$ can decrease the objective function $f$ locally. These novel findings can be used in the design of effective, function value-based globalization mechanisms for $\AAr$ approaches. We propose one such possible $\AAr$ globalization and conduct numerical experiments on two large-scale learning problems that illustrate our theoretical results.
 

\section*{Acknowledgments} We would like to thank the Associate Editor and three anonymous reviewers for their detailed and constructive comments, which have helped greatly to improve the quality and presentation of the manuscript.

\appendix

\section{Proof of~\cref{q_linear_conv}} \label{app:q-proof}
\begin{proof}We show that \cref{q_linear_conv} is a direct application of \cite[Theorem 5.1]{pollock2021anderson}. Due to \eqref{contract_g}, the constant $\kappa_g$ in \cite{pollock2021anderson} reduces to $1-\frac{1}{\kap}$ and we have $\theta_k \leq 1$ and $\beta_k = 1$. Moreover, all the points of interest lie in $\B_r(\xopt)$, so all the expansions of the residuals in \cite[Section 3]{pollock2021anderson} are legitimate. The core estimate (5.18) in \cite[Theorem 5.1]{pollock2021anderson} then reduces to \eqref{eq:hk-q}. The proof is complete if all assumptions in \cite[Theorem 5.1]{pollock2021anderson} hold. 

Using $x^i \in \B_r(\xopt)$, $i = k-\hat m,\dots,k$ and \eqref{contract_g}, it follows
\[ \|h^{i}-h^{i-1}\| \geq \|x^{i}-x^{i-1}\| - \|g(x^{i})-g(x^{i-1})\| \geq \kap^{-1} \|x^{i}-x^{i-1}\| \quad \forall~i = k-\hat m+1,\dots,k. \]
This is exactly Assumption 2.3 in \cite{pollock2021anderson} (see also \cite[Remark 2.1]{pollock2021anderson}). Next, we verify the sufficient linear independence condition introduced in \cite[Lemma 5.2]{pollock2021anderson}. Let us define $\tilde H_k=[h^{k}-h^{k-1},\dots,h^{k-\hat m+1}-h^{k-\hat m}]=:[v^1,\dots,v^{\hat m}]$. We note that there is a fixed nonsingular matrix $P\in\R^{\hat m\times \hat m}$ such that $\tilde H_k=H_kP$ and $\kappa(\tilde H_k^\top \tilde H_k)\leq \kappa(H_k^\top H_k)\kappa(P^\top P)$. Therefore, by \cref{prop-HXeq} and using $x^i \in U_1$, $i = k-\hat m,\dots,k$, \ref{A5} implies that the condition number of $\tilde H_k^\top \tilde H_k$ is bounded by some $\tilde M^2$. Let  $\cV_i=\spa\{v^1,\dots,v^i\}$ denote the linear subspace spanned by the first $i$ columns of $\tilde H_k$ and let $\tilde H_k=Q_kR_k$ be the QR decomposition of $\tilde H_k$. We then have $\kappa(R_k^\top R_k) = \kappa(\tilde H_k^\top \tilde H_k) \leq \tilde M^2$. Furthermore, let $\{r_{ii}\}_{1\leq i\leq \hat m}$ denote the diagonal entries of $R_k$. By \cite[Proposition 5.2]{pollock2021anderson}, it follows $r_{11}^2 = \|v_1\|^2$ and $r_{ii}^2=\|v_i\|^2\sin^2(v_i,\cV_{i-1})$ for all $2\leq i\leq \hat m$.
    Since $R_k$ is upper triangular, the diagonal entries $r_{ii}$, $i=1,\dots,\hat m$, are exactly the eigenvalues of $R_k$. Consequently, we obtain
    \[  ({\|v_i\|^2}/{\|v_1\|^2})\cdot\sin^2(v_i,\cV_{i-1}) = {r_{ii}^2}/{r_{11}^2}\geq {\sigma_{\min}(R_k)^2}/{\sigma_{\max}(R_k)^2}\geq 1/{\tilde M^2}.             \]
    In addition, we have ${\|v_i\|^2}/{\|v_1\|^2}\leq{\sigma_{\max}(\tilde H_k)^2}/{\sigma_{\min}(\tilde H_k)^2} \leq \tilde M^2$.
    Combining these inequalities, this yields $|\sin(v_i,\cV_{i-1})|\geq \tilde M^{-2}$ which verifies the last remaining assumption in \cite[Lemma 5.2 and Theorem 5.1]{pollock2021anderson}. This concludes the proof. \end{proof}
 
\section{Proof of~\cref{thm4-10}} \label{app:cg-proof}

\begin{proof}
    Similar to \eqref{dist_decom} and utilizing the projection $y^{(k+1)}$, we have:
    \begin{align*}
        & \hspace{-4ex}\|\bar y^{k}-y^*\|^2-\|y^{k+1}-y^*\|=\|\bar y^k-y^{(k+1)}\|^2-\|y^{k+1}-y^{(k+1)}\|^2\\&=\|\bar y^k-y^{k+1}\|^2+2\langle \bar y^k-y^{k+1},y^{k+1}-y^{(k+1)}\rangle \\
        &=\|a_kp^k-{L^{-1}}r^k\|^2+2\langle a_kp^k-{L^{-1}}r^k ,\gamma_kp^k\rangle \\
        &=a_k^2\|p^k\|^2-{2a_k}L^{-1}\langle p^k,r^k\rangle+L^{-2}\|r^k\|^2+2a_k\gamma_k\|p^k\|^2-{2\gamma_k}L^{-1}\langle p^k,r^k\rangle \\
        &=(a_k^2+2a_k\gamma_k)\|p^k\|^2+\tfrac{1}{L}[L^{-1}-2\gamma_k-2a_k]\|r^k\|^2 \\
        &\geq \left[a_k^2+2a_k\gamma_k-{2\gamma_k}L^{-1}-2a_kL^{-1}+L^{-2}\right]\|r^k\|^2 \\ & = \left[2\gamma_k(a_k-{L^{-1}})+(a_k-{L^{-1}})^2\right]\|r^k\|^2\geq 0,
    \end{align*}
    where we have used Property (\rmnum3) to show that $\langle p^k,r^k \rangle=\|r^k\|^2$, Property (\rmnum4) to show that $\|p^k\|^2\geq \|r^k\|^2$, and Property (\rmnum9) to show that $\gamma_k\geq a_k\geq {1}/{L}$.
\end{proof}
 
\bibliographystyle{siamplain}
\bibliography{descent_aa_bib}

\end{document}